\newcommand*{\wh}{\widehat}
\newcommand*{\wt}{\widetilde}
\newcommand*{\ol}{\overline}
\newcommand*{\cA}{\mathcal{A}}
\newcommand*{\cF}{\mathcal{F}}
\newcommand*{\cL}{\mathcal{L}}
\newcommand*{\cY}{\mathcal{Y}}
\newcommand*{\cZ}{\mathcal{Z}}
\newcommand*{\N}{\mathbb{N}}
\newcommand*{\R}{\mathbb{R}}
\DeclareMathOperator*{\essinf}{ess\,inf}
\newcommand{\be}{\begin{eqnarray*}}
\newcommand{\ee}{\end{eqnarray*}}
\newcommand{\ben}{\begin{eqnarray}}
\newcommand{\een}{\end{eqnarray}}
\newcommand{\bi}{\begin{itemize}}
\newcommand{\ei}{\end{itemize}}
\newcommand*{\mD}{\widetilde{D}}
\newcommand*{\aphi}{\varphi}
\newcommand*{\sfi}{H}
\newcommand*{\ssfi}{\ol{H}}
\newcommand*{\ssfiu}{\wt H}
\newcommand*{\ssfihat}{\wh H}
\newcommand*{\fvJ}{J^{fv}}
\newcommand*{\pmJ}{J^{pm}}
\newcommand*{\LQJ}{J}
\newcommand*{\KTJ}{\hat{J}}
\newcommand*{\rcor}{\ol{r}}
\newcommand*{\md}{\mathbf{d}}
\newcommand*{\kl}{\frac{\lambda_s}{\lambda_s+\kappa_s}}
\newcommand*{\Prog}{\mathrm{Prog}}
\newtheorem{theo}{Theorem}[section]
\newtheorem{lemma}[theo]{Lemma}
\newtheorem{propo}[theo]{Proposition}
\newtheorem{corollary}[theo]{Corollary}
\theoremstyle{definition}
\newtheorem{ex}[theo]{Example}
\newtheorem{remark}[theo]{Remark}
\title{Reducing Obizhaeva-Wang type trade execution problems to LQ stochastic control problems}
\author{Julia Ackermann\thanks{Department of Mathematics \& Informatics, University of Wuppertal, Gaußstr.~20, 42119 Wuppertal, Germany.
\emph{Email:} jackermann@uni-wuppertal.de, \emph{Phone:} +49 (0)202 4395238.}
\and Thomas Kruse\thanks{Department of Mathematics \& Informatics, University of Wuppertal, Gaußstr.~20, 42119 Wuppertal, Germany.
\emph{Email:} tkruse@uni-wuppertal.de, \emph{Phone:} +49 (0)202 4395239.}
\and Mikhail Urusov\thanks{Faculty of Mathematics, University of Duisburg-Essen, Thea-Leymann-Str.~9, 45127 Essen, Germany.
\emph{Email:} mikhail.urusov@uni-due.de, \emph{Phone:} +49 (0)201 1837428.
}}
\begin{document}

\maketitle

\begin{abstract}
We start with a stochastic control problem where the control process is of finite variation (possibly with jumps) and acts as integrator both in the state dynamics and in the target functional.
Problems of such type arise in the stream of literature on optimal trade execution pioneered by Obizhaeva and Wang (models with finite resilience).
We consider a general framework where the price impact and the resilience are stochastic processes.
Both are allowed to have diffusive components.
First we continuously extend the problem from processes of finite variation to progressively measurable processes.
Then we reduce the extended problem to a linear quadratic (LQ) stochastic control problem.
Using the well developed theory on LQ problems we describe the solution to the obtained LQ one and trace it back up to the solution to the (extended) initial trade execution problem.
Finally, we illustrate our results by several examples.
Among other things the examples show
the Obizhaeva-Wang model with random (terminal and moving) targets,
the necessity to extend the initial trade execution problem to a reasonably large class of progressively measurable processes (even going beyond semimartingales) and 
the effects of diffusive components in the price impact process and/or in the resilience process.

\smallskip
\emph{Keywords:}
optimal trade execution;
stochastic price impact;
stochastic resilience;
finite variation stochastic control;
continuous extension of cost functional;
progressively measurable execution strategy;
linear quadratic stochastic control;
backward stochastic differential equation.

\smallskip
\emph{2020 MSC:}
Primary: 91G10; 93E20; 60H10.
Secondary: 60G99.
\end{abstract}

\section*{Introduction}

In the literature on optimal trade execution in illiquid financial markets there arise stochastic control problems where the control is a process of finite variation (possibly with jumps) that acts as integrator both in the state dynamics and in the target functional.
For brevity, we use the term \emph{finite variation stochastic control} for such problems.\footnote{Notice that the class of finite variation stochastic control problems contains the class of singular stochastic control problems.}
In contrast, for control problems where the state is driven by a controlled stochastic differential equation (SDE) and the control acts as one of the arguments in that SDE and as one of the arguments in the integrand of the target functional, we use the term \emph{standard stochastic control problems}.

In this article we present a general solution approach to finite variation stochastic control problems that arise in the literature on optimal trade execution. 
We set up a finite variation stochastic control problem 
of the type of the one in Obizhaeva and Wang \cite{obizhaeva2013optimal}
and its extensions like, e.g., Alfonsi and Acevedo \cite{alfonsi2014optimal},
Bank and Fruth \cite{bank2014optimal}, 
Fruth et al.\ \cite{fruth2014optimal} and \cite{fruth2019optimal}. 
We then show how it can be transformed into a standard linear quadratic
(LQ) 
stochastic control problem which can be solved with the help of state-of-the-art techniques from stochastic optimal control theory. 
In the introduction we first describe the finite variation stochastic control problem and showcase its usage in finance, before presenting our solution approach,
summarizing our main contributions 
and embedding our paper into the literature.

\medskip
\textbf{Finite variation stochastic control problem:}
As a starting point we consider in this paper the following stochastic control problem. Let $T>0$ and let $(\Omega,\cF_T,(\cF_t)_{t\in[0,T]}, P)$ be a filtered probability space satisfying the usual conditions. Let $\xi$ be an $\cF_T$-measurable random variable and let $\zeta=(\zeta_s)_{s\in [0,T]}$ be a progressively measurable process both satisfying suitable integrability assumptions (see \eqref{eq:conditionxi} below). 
Further, let $\lambda=(\lambda_s)_{s\in [0,T]}$ be a bounded progressively measurable process. 
Let $\gamma=(\gamma_s)_{s\in [0,T]}$ be a positive It\^o process driven by some Brownian motion
and $R=(R_s)_{s\in [0,T]}$ an It\^o process driven by a (stochastically) correlated Brownian motion
(see \eqref{eq:resilience} and \eqref{eq:priceimpact} below). 
Throughout the introduction we fix $t\in [0,T]$, $x,d\in \R$ and denote by $\cA^{fv}_t(x,d)$ the set of all adapted, c\`adl\`ag, finite variation processes $X=(X_s)_{s\in[t-,T]}$ satisfying $X_{t-}=x$, $X_T=\xi$, and appropriate integrability assumptions (see (A1)--(A3) below). To each $X\in \cA^{fv}_t(x,d)$ we associate a process $D^X=(D^X_s)_{s\in[t-,T]}$ satisfying
\begin{equation}\label{eq:deviationdyndR_intro}
	dD^X_s = -D^X_sdR_s + \gamma_s dX_s, \quad s \in [t,T], \quad 
	D^X_{t-}=d.
\end{equation}
We consider the finite variation stochastic control problem 
of minimizing the  cost functional
\begin{equation}\label{eq:defcostfct1_intro}
	\fvJ_{t}(x,d,X) = E_t\left[ \int_{[t,T]} \left( D^X_{s-} + \frac12 \Delta X_s \gamma_s \right) dX_s + \int_t^T \lambda_s\gamma_s \left(X_s - \zeta_s \right)^2 ds \right]
\end{equation}
over $X\in\cA^{fv}_t(x,d)$,
where $E_t[\cdot]$ is a shorthand notation for $E[\cdot|\cF_t]$.

\medskip
\textbf{Financial interpretation:}
Stochastic control problems with cost functional of the form \eqref{eq:defcostfct1_intro} or a special case thereof play a central role in the scientific literature on optimal trade execution problems (see the literature discussion below). Consider an institutional investor who holds immediately prior to time $t\in [0,T]$ a position $x\in \R$ ($x>0$ meaning a long position
of $x$ shares of a stock
and $x<0$ a short position
of $-x$ shares) of a certain financial asset. 
The investor trades the asset during the period $[t,T]$ in such a way that at each time $s\in [t-,T]$ the position is given by the value $X_s$ of the adapted, c\`adl\`ag, finite variation process $X=(X_s)_{s\in[t-,T]}$ (satisfying $X_{t-}=x$).
More precisely, $X_{s-}$ 
represents 
the position immediately prior to the trade at time $s$, while $X_s$ is the position immediately after that trade. 
The investor's goal is to reach the target position
$$
X_T=\xi
$$
during the course of the trading period $[t,T]$. Note that we allow $\xi$ to be random to incorporate the possibility that the target position is not known at the beginning of trading but only revealed at terminal time $T$. Such situations may for example be faced by airline companies buying on forward markets the kerosene they need in $T$ months. Their precise demand for kerosene at that future time depends on several factors, such as ticket sales and flight schedules, that are not known today but only gradually learned. 

We assume that the market the investor trades in is illiquid, 
implying that the investor's trades impact the asset price. 
To model this effect, we assume (as is typically done in the literature on optimal trade execution) an additive impact on the price. 
This means that the realized price at which the investor trades at time $r\in [t,T]$ consists of an unaffected price $S^0_r$ plus a deviation $D^X_r$ that is caused by the investor's trades during $[t,r]$. 
We assume that the unaffected price process $S^0=(S^0_r)_{r\in [0,T]}$  is a c\`adl\`ag martingale satisfying appropriate integrability conditions.
Then integration by parts and the martingale property of $S^0$ ensure that expected trading costs due to $S^0$ are given by
$$
E_t\left[\int_{[t,T]}S^0_{r}dX_r\right]=E_t\left[\xi S_T^0\right]-xS_t^0.
$$
Thus, these costs do not depend on the investor's trading strategy $X$ and are therefore neglected in the sequel (we refer to Remark 2.2 in \cite{ackermann2020cadlag} for a more detailed discussion in the case $\xi=0$). The deviation process $D^X$ associated to $X$ is given by \eqref{eq:deviationdyndR_intro}. Informally speaking, we see from \eqref{eq:deviationdyndR_intro} that a trade of size $dX_s$ at time $s\in [t,T]$ impacts $D^X$ by $\gamma_s dX_s$. So, the factor $\gamma_s$ determines how strongly the price reacts to trades, and the process $\gamma$ is therefore called the \emph{price impact process}. In particular, the fact that $\gamma$ is nonnegative entails that a buy trade $dX_s>0$ leads to higher prices whereas a sell trade $dX_s<0$ leads to smaller prices. The second component $-D^X_sdR_s$ in the dynamics \eqref{eq:deviationdyndR_intro} describes the behavior of $D^X$ when the investor is not trading. Typically, it is assumed that $R$ is an increasing process such that in the absence of trades $D^X$ is reverting to $0$ with relative rate $dR_s$. Therefore, $R$ is called the \emph{resilience process}. We refer to \cite{ackermann2021negativeresilience} for a discussion of the effects of  ``negative'' resilience, where $R$ might also be decreasing. We highlight that in the present paper we allow $R$ to have a diffusive part. 
In summary, we note that the deviation prior to a trade of the investor at time $s\in [t,T]$ is given by $D^X_{s-}$ whereas it is equal to $D^X_s=D^X_{s-}+\gamma_s \Delta X_s$ afterwards. We take the mean $D^X_{s-}+\frac{1}{2}\gamma_s \Delta X_s$ of these two values 
as the realized price per unit so that the investor's overall trading costs due to $D^X$ amount to $\int_{[t,T]}\left(D^X_{s-}+\frac{1}{2}\gamma_s \Delta X_s\right)dX_s$. This describes the first integral  on the right-hand side of \eqref{eq:defcostfct1_intro}. Under the assumption that $\lambda$ is nonnegative, the second integral $\int_t^T \lambda_s\gamma_s \left(X_s - \zeta_s \right)^2 ds$ can be understood as a risk term that 
penalizes any deviation of the position $X$ from the moving target $\zeta$ in a quadratic way\footnote{The parametrization $\lambda_s\gamma_s$, $s\in [0,T]$, for the weight is chosen out of mathematical convenience since it makes some of the following assumptions and results shorter to state. Likewise, one can use $\tilde \lambda_s$, $s\in [0,T]$, as a weight and replace $\lambda$ by $\tilde \lambda/\gamma$ in the subsequent assumptions and results.}. A possible and natural choice would be $\zeta_s=E_s[\xi]$, $s\in [0,T]$, so that the risk term ensures that any optimal strategy $X$ does not deviate too much from the (expected) target position $\xi$ in the course of the trading period. 

\medskip
\textbf{Solution approach:} 
The overarching goal of this paper is to show that the finite variation stochastic control problem \eqref{eq:defcostfct1_intro} is equivalent to a standard LQ stochastic control problem (see \cref{cor:equivofinfcostfct} and \cref{cor:uoptimaliffXoptimal} below). The derivation of this result is based on the following insights. The first observation is that, in general, the functional  \eqref{eq:defcostfct1_intro} does not admit a minimizer in $\cA^{fv}_t(x,d)$ (see \Cref{exa:ExampleNonexistenceFromLQct} below for a specific example). In \cite{ackermann2020cadlag} the functional \eqref{eq:defcostfct1_intro} was extended to a set of c\`adl\`ag semimartingales $X$ 
and it was shown that its minimum is attained in this set of semimartingales if and only if a certain process that is derived from the solution of an associated backward stochastic differential equation (BSDE) can be represented by a c\`adl\`ag semimartingale (see Theorem 2.4 in \cite{ackermann2020cadlag}). In this work we go even a step further and extend the functional \eqref{eq:defcostfct1_intro} to the set $\cA^{pm}_t(x,d)$ 
of progressively measurable processes $X=(X_s)_{s\in [t-,T]}$ satisfying appropriate integrability conditions (see (A1) below) and the boundary conditions $X_{t-}=x$ and $X_T=\xi$. To do so, we first derive alternative representations of the first integral inside the expectation in \eqref{eq:defcostfct1_intro} and the deviation in \eqref{eq:deviationdyndR_intro} that do not involve $X\in \cA^{fv}_t(x,d)$ as an integrator (see \cref{propo:costfunctionalpart}). It follows that the resulting alternative representation of $\fvJ$ (see \cref{propo:representationcostfct}) is not only well-defined on $ \cA^{fv}_t(x,d)$ but even on $\cA^{pm}_t(x,d)$, and we denote this extended functional by $\pmJ$ (see \cref{sec:prog_meas_strat}). We next introduce a metric on $\cA^{pm}_t(x,d)$ and prove that $\pmJ$ is the unique continuous extension of $\fvJ$ from $\cA^{fv}_t(x,d)$ to $\cA^{pm}_t(x,d)$ (see \cref{thm:contextcostfct}). In particular, it follows that the infimum of $\fvJ$ over $\cA^{fv}_t(x,d)$ and the infimum of $\pmJ$ over $\cA^{pm}_t(x,d)$ coincide.

Next, for a given $X\in \cA^{pm}_t(x,d)$ we identify the process $\ssfi^X_s =  \gamma_s^{-\frac12} D^X_s - \gamma_s^{\frac12} X_s$, $s\in [t,T]$, as a useful tool in our analysis. Despite $X$ and $D^X$ having discontinuous paths in general, the process $\ssfi^X$, which we call the \textit{scaled hidden deviation process}, is always continuous. Moreover, we show that $\ssfi^X$
can be expressed in feedback form as an It\^o process with coefficients that are 
linear in $\gamma^{-\frac12}D^X$ and $\ssfi^X$ (see \cref{lem:scaledhiddendevdyn}). Subsequently, we reinterpret the process $\gamma^{-\frac12}D^X$ as a control process $u$ and $\ssfi^X$ as the associated state process. Since the cost functional $\pmJ$ is quadratic in $\ssfi^X$ and $u=\gamma^{-\frac12}D^X$, we arrive at a standard LQ stochastic control problem (see \eqref{eq:controlledprocdyn} and \eqref{eq:defcostfct2}) whose minimal costs coincide with the infimum of $\pmJ$ over $\cA^{pm}_t(x,d)$ (see \cref{cor:equivofinfcostfct}). Importantly, there is a one-to-one correspondence between square integrable controls $u$ for this standard problem and strategies $X\in \cA^{pm}_t(x,d)$, which allows to recover the minimizer $X^*\in \cA^{pm}_t(x,d)$ of $\pmJ$ from a minimizer $u^*$ of the standard problem and vice versa (see \cref{cor:uoptimaliffXoptimal}). 

We then solve the LQ stochastic control problem in \eqref{eq:controlledprocdyn} and~\eqref{eq:defcostfct2} using techniques provided in the literature on stochastic optimal control theory. More precisely, 
we apply results from 
Kohlmann and Tang \cite{kohlmann2002global}\footnote{We moreover indicate in \Cref{rem:sun} how we could alternatively use results from Sun et al.~\cite{sun2021indefiniteLQ}.} 
to provide conditions that 
guarantee that an optimal control $u^*$ exists (and is unique). 
This optimal control $u^*$ in the LQ problem is characterized by two BSDEs: one is a quadratic BSDE of Riccati type, the other one is linear, however, with unbounded coefficients
(see \Cref{thm:solnbykohlmanntang}).
In \Cref{cor:solnforJpm} we trace everything back and obtain a unique optimal execution strategy in the class of progressively measurable processes in a closed form (in terms of the solutions to the mentioned BSDEs).

\medskip
\textbf{Summary of our contributions:} 
(a) The Obizhaeva-Wang type finite variation stochastic control problem \eqref{eq:deviationdyndR_intro}--\eqref{eq:defcostfct1_intro} is continuously extended to the set $\cA^{pm}_t(x,d)$ of appropriate progressively measurable processes~$X$.

\smallskip
(b) Problem \eqref{eq:deviationdyndR_intro}--\eqref{eq:defcostfct1_intro} is rather general. In particular, it includes the following features:
\begin{itemize}
\item
Presence of random terminal and moving targets $\xi$ and $(\zeta_s)$;
\item
Price impact is a positive It\^o process $(\gamma_s)$;
\item
Resilience\footnote{To expand on this point, it is worth noting that in our current parametrization, only processes $(R_s)$ with dynamics $dR_s=\rho_s\,ds$ without a diffusive component were considered by now in the literature on optimal trade execution in Obizhaeva-Wang type models. 
Moreover, in most papers $\rho$ is assumed to be positive, that is, only the case of an increasing $(R_s)$ was extensively studied previously.} 
is an It\^o process $(R_s)$ acting as an integrator in~\eqref{eq:deviationdyndR_intro}.
\end{itemize}

\smallskip
(c) 
Via introducing the mentioned \emph{scaled hidden deviation process} $(\ssfi^X_s)$ and reinterpreting the process $(\gamma_s^{-\frac12}D^X_s)$ as a control in an (a priori, different) stochastic control problem, the extended to $\cA^{pm}_t(x,d)$ problem is reduced to an explicitly solvable LQ stochastic control problem. 
Thus, a unique optimal execution strategy in $\cA^{pm}_t(x,d)$ is obtained in a closed form (in terms of solutions to two BSDEs).

\medskip
\textbf{Literature discussion:}
Finite variation stochastic control problems arise in the group of literature on optimal trade execution in limit order books with finite resilience.
The pioneering work\footnote{Posted 2005 on SSRN.} Obizhaeva and Wang \cite{obizhaeva2013optimal} models the price impact via a block-shaped limit order book, where the impact decays exponentially at a constant rate.
This embeds into our model via the price impact process $\gamma$ that is a positive constant and the resilience process $(R_s)$ given by $R_s=\rho s$ with some positive constant $\rho>0$.
Alfonsi et al.\ \cite{alfonsi2008constrained} study constrained portfolio liquidation in the Obizhaeva-Wang model.
Subsequent works within this group of literature either extend this framework in different directions or suggest alternative frameworks with similar features.
There is a subgroup of models which include more general limit order book shapes, see Alfonsi et al.\ \cite{alfonsi2010optimal},
Alfonsi and Schied \cite{alfonsi2010boptimal},
Predoiu et al.\ \cite{predoiu2011optimal}.
Models in another subgroup extend the exponential decay of the price impact to general decay kernels, see Alfonsi et al.\ \cite{alfonsi2012order},
Gatheral et al.\ \cite{gatheral2012transient}.
Models with multiplicative price impact are analyzed in Becherer et al.\ \cite{becherer2018optimala,becherer2018optimalb}.
We mention that in \cite{becherer2018optimalb}, the (multiplicative) deviation is of Ornstein-Uhlenbeck type and incorporates a diffusion term (but this is different from our diffusion term that results from a diffusive part in the resilience $R$). 
Superreplication and optimal investment in a block-shaped limit order book model with exponential resilience is discussed in Bank and Dolinsky \cite{BD_AAP_2019,BD_Bern_2020} and in Bank and Vo\ss{} \cite{BV_SIFIN_2019}.

The present paper falls into the subgroup of the literature that studies time-dependent (possibly stochastic) price impact $(\gamma_s)$ and resilience $(R_s)$ 
in generalized Obizhaeva-Wang models. 
In this connection we mention the works
Alfonsi and Acevedo \cite{alfonsi2014optimal},
Bank and Fruth \cite{bank2014optimal},
Fruth et al.\ \cite{fruth2014optimal},
where deterministically varying price impact and resilience are considered.
Fruth et al.\ \cite{fruth2019optimal} allow for stochastically varying price impact (resilience is still deterministic) and study the arising optimization problem over monotone strategies.
Optimal strategies in a discrete-time model with 
stochastically varying resilience and constant price impact are derived in Siu et al.\ \cite{SiuGuoZhuElliott2019}. 
In Ackermann et al.\ \cite{ackermann2020cadlag,ackermann2021negativeresilience,ackermann2020optimal} both price impact and resilience are stochastic.
We now describe the differences from our present paper in more detail.
In \cite{ackermann2020optimal} optimal execution 
is studied in discrete time via dynamic programming. 
In \cite{ackermann2020cadlag} the framework is the closest to the one in this paper. 
Essentially, our current framework 
is the framework from \cite{ackermann2020cadlag} extended by a 
risk term with some moving target $(\zeta_s)$, 
a possibly non-zero (random) terminal target $\xi$, 
and a larger class of resilience processes (in \cite{ackermann2020cadlag}, as in many previous papers, $(R_s)$ is assumed to have the dynamics $dR_s=\rho_s\,ds$, and $(\rho_s)$ is called resilience). 
In \cite{ackermann2021negativeresilience} the framework is similar to the one in \cite{ackermann2020cadlag}, while the aim is to study qualitative effects of ``negative'' resilience (in the sense that $\rho_s\le 0$ with $(\rho_s)$ as in the previous sentence).
Now, to compare the approach in the present paper with the one in \cite{ackermann2020cadlag}, we first recall that in \cite{ackermann2020cadlag} the finite variation stochastic control problem of the type \eqref{eq:deviationdyndR_intro}--\eqref{eq:defcostfct1_intro} is extended to allow for c\`adl\`ag semimartingale trading strategies $X$ and the resulting optimal execution problem over semimartingales is studied.
The approach in \cite{ackermann2020cadlag} is based on \eqref{eq:deviationdyndR_intro}--\eqref{eq:defcostfct1_intro} (extended with some additional terms), but this does not work beyond semimartingales, as $X$ acts as integrator there.
In contrast, our continuous extension needs to employ essentially different ideas 
since we want to consider the set $\cA^{pm}_t(x,d)$ of progressively measurable strategies (in particular, beyond semimartingales).
This extension is indeed necessary to get an optimizer (see the discussion in the end of \Cref{exa:ExampleNonexistenceFromLQct}). 

Especially with regard to our extension result we now mention several papers where, in different models with finite resilience, trading strategies are not restricted to be of finite variation.
The first instance known to us is Lorenz and Schied \cite{lorenz2013drift}, who discuss dependence of optimal trade execution strategies on a drift in the unaffected price.
In order to react to non-martingale trends they allow for c\`adl\`ag semimartingale trading strategies.
G\^arleanu and Pedersen \cite[Section~1.3]{garleanu2016dynamic} allow for strategies of infinite variation in an infinite horizon portfolio optimization problem under market frictions.
Becherer et al.\ \cite{becherer2019stability} prove a continuous extension result for gains of a large investor in the Skorokhod $J_1$ and $M_1$ topologies in the class of predictable strategies with c\`adl\`ag paths.
As discussed in the previous paragraph in more detail, in \cite{ackermann2020cadlag} the strategies are c\`adl\`ag semimartingales.
In Horst and Kivman \cite{HorstKivman2021} c\`adl\`ag semimartingale strategies emerge in the limiting case of vanishing instantaneous impact parameter, where the initial modeling framework is inspired by Graewe and Horst \cite{graewe2017optimal} and Horst and Xia \cite{horst2019multi}.

To complement the preceding discussion from another perspective, we mention Carmona and Webster \cite{carmona2019selffinancing}, who examine high-frequency trading in limit order books in general (not necessarily related with optimal trade execution).
It is very interesting that one of their conclusions is a strong empirical evidence for the infinite variation nature of trading strategies of high-frequency traders.

Finally, let us mention that, in the context of trade execution problems, risk terms with zero moving target have been included, e.g., in Ankirchner et al.\ \cite{ankirchner2014bsdes},
Ankirchner and Kruse \cite{ankirchner2015optimal},
Graewe and Horst \cite{graewe2017optimal}. Inequality terminal constraints have been considered in Dolinsky et al.\ \cite{dolinsky2020note}, and
risk terms with general terminal and moving targets appear in the models of, e.g.,
Bank et al.\ \cite{BankSonerVoss2017},
Bank and Vo\ss{} \cite{bank2018linear},
Horst and Naujokat \cite{HorstNaujokat2014},
Naujokat and Westray \cite{NaujokatWestray2011}. In particular, \cite{ankirchner2015optimal}, \cite{bank2018linear}, and \cite{dolinsky2020note} consider random terminal targets $\xi$ within trade execution models where position paths are required to be absolutely continuous functions of time. This restriction of the set of position paths entails technical difficulties that make these problems challenging to analyze. In particular, existence of admissible paths that satisfy the terminal constraint is far from obvious and can in general only be assured under further conditions on $\xi$. Since in our model position paths are allowed to jump at terminal time we do not face these challenges in our framework.

\medskip
The paper is structured as follows.
\Cref{sec:problemformulation} is devoted to the continuous extension of our initial trade execution problem to the class of progressively measurable strategies.
\Cref{sec:reductiontoLQ} reduces the problem for the progressively measurable strategies to a standard LQ stochastic control problem.
In \Cref{sec:soln_and_ex} we present the solution to the obtained LQ problem and trace it back up to the solution to the (extended to progressively measurable strategies) trade execution problem.
In \Cref{sec:examples} we illustrate our results with several examples.
Finally, \Cref{sec:proofs} contains the proofs together with some auxiliary results necessary for them.

\section{From finite variation to progressively measurable execution strategies}\label{sec:problemformulation}
In this section we first set up the finite variation stochastic control problem (see \Cref{sec:fvstrat}). In \Cref{sec:alt_rep} we then derive alternative representations of the cost functional and the deviation process which do not require the strategies to be of finite variation. We use these results in \Cref{sec:prog_meas_strat} to extend the cost functional to progressively measurable strategies. In \Cref{sec:continextensioncostfct} we show that this is the unique continuous extension. \Cref{sec:hidden_dev} introduces the hidden deviation process as a key tool for the proofs of \Cref{sec:continextensioncostfct}.
All proofs of this section are deferred to \Cref{sec:proofs}.

\subsection{The finite variation stochastic control problem}\label{sec:fvstrat}
Let $T>0$ and $m\in\N$, $m\geq 2$. We fix a filtered probability space $(\Omega,\cF_T,(\cF_s)_{s\in[0,T]}, P)$ 
satisfying the usual conditions and supporting 
an $m$-dimensional Brownian motion $(W^1,\ldots,W^m)^\top$
with respect to the filtration $(\cF_s)$.

We first fix some notation.
For $t\in [0,T]$ conditional expectations with respect to $\cF_t$ are denoted by $E_t[\cdot]$. For $t\in[0,T]$ and a c\`adl\`ag process $X=(X_s)_{s\in[t-,T]}$ a jump at time $s\in[t,T]$ is denoted by $\Delta X_s= X_s-X_{s-}$. 
We follow the convention that, for $t\in[0,T]$, $r \in [t,T]$ and a c\`adl\`ag semimartingale $L=(L_s)_{s\in [t-,T]}$, jumps of the c\`adl\`ag integrator $L$ at time $t$ contribute to integrals of the form $\int_{[t,r]} \ldots dL_s$. 
In contrast, we write $\int_{(t,r]} \ldots dL_s$ when we do not include jumps of $L$ at time $t$ into the integral. The notation $\int_t^r \ldots dL_s$ is sometimes used for continuous integrators $L$. 
For $n \in \N$ and $y \in \R^n$ let $\lVert y \rVert_2 = (\sum_{j=1}^n y_j^2)^{\frac 12}$. 
For every $t\in [0,T]$ we mean by $L^1(\Omega,\cF_t,P)$ the space of all real-valued $\cF_t$-measurable random variables $Y$ such that $\lVert Y \rVert_{L^1} = E[\lvert Y \rvert] < \infty$.
For $t\in[0,T]$, let $\cL_t^2=\cL^2(\Omega \times [t,T],\Prog(\Omega \times [t,T]),dP\times ds|_{[t,T]})$ denote the space of all (equivalence classes of) real-valued progressively measurable processes $u=(u_s)_{s\in[t,T]}$ such that $\lVert u \rVert_{\cL_t^2} = (E[ \int_t^T u_s^2 ds ])^{\frac12} < \infty$.

\medskip

The control problem we are about to set up requires as input the real-valued, $\mathcal F_T$-measurable random variable $\xi$ and the real-valued, progressively measurable processes $\mu=(\mu_s)_{s \in [0,T]}$, $\sigma=(\sigma_s)_{s \in [0,T]}$, $\rho=(\rho_s)_{s \in [0,T]}$, $\eta=(\eta_s)_{s \in [0,T]}$,  $\rcor=(\rcor_s)_{s \in [0,T]}$, $\zeta=(\zeta_s)_{s\in[0,T]}$ and $\lambda=(\lambda_s)_{s\in[0,T]}$. We suppose that $\mu$, $\sigma$, $\rho$, $\eta$ and $\lambda$ are $dP\times ds|_{[0,T]}$-a.e.\ bounded. 
Moreover, we assume 
that $\rcor$ is $[-1,1]$-valued.  We define $W^R=(W^R_s)_{s\in[0,T]}$ by 
$dW^R_s=\rcor_s dW^1_s + \sqrt{1-\rcor_s^2} dW^2_s$, $s \in [0,T]$, $W^R_0=0$ and refer to $\rcor$ as the \textit{correlation process}.
The processes $\rho$ and $\eta$ give rise to
the continuous semimartingale $R=(R_s)_{s\in[0,T]}$ with 
\begin{equation}\label{eq:resilience}
	dR_s = \rho_s ds + \eta_s dW^R_s, \quad s \in [0,T], \quad  R_0=0,
\end{equation}
which is called the \textit{resilience process}. 
We use the processes $\mu$ and $\sigma$ to define the positive continuous semimartingale $\gamma=(\gamma_s)_{s\in [0,T]}$ by
\begin{equation}\label{eq:priceimpact}
	d\gamma_s = \gamma_s (\mu_s ds + \sigma_s dW^1_s), \quad s\in [0,T],
\end{equation}
with deterministic initial value $\gamma_0>0$. We refer to $\gamma$ as the \textit{price impact process}. Finally, we assume that $\xi$ and $\zeta$ satisfy the integrability conditions
\begin{equation}\label{eq:conditionxi}
	E[\gamma_T \xi^2]<\infty \quad \text{and} \quad 
	E\left[ \int_{0}^T \gamma_s \zeta_s^2 ds \right]<\infty .
\end{equation}

\begin{remark}
Note that the components $W^3,\ldots, W^m$ of the Brownian motion are not needed in the dynamics \eqref{eq:resilience} and~\eqref{eq:priceimpact}. We introduce these components already here, as in \Cref{sec:soln_and_ex},
in order to apply the results from the literature on LQ stochastic control,
we restrict the present setting a little by assuming 
that the filtration $(\cF_s)_{s\in[0,T]}$ is generated by $(W^1,\ldots,W^m)^\top$. The components $W^3,\ldots, W^m$ will therefore serve as further sources of randomness, on which the model inputs may depend.
\end{remark}

We next introduce the finite variation strategies that we consider in the sequel. 
Given $t \in[0,T]$ and $d \in\R$ we associate to an adapted, c\`adl\`ag, finite variation process $X=(X_s)_{s\in[t-,T]}$ a process $D^X=(D^X_s)_{s\in[t-,T]}$ defined by 
\begin{equation}\label{eq:deviationdyndR}
	dD^X_s = -D^X_sdR_s + \gamma_s dX_s, \quad s \in [t,T], \quad 
	D^X_{t-}=d.
\end{equation}
If there is no risk of confusion we sometimes simply write $D$ instead of $D^X$ in the sequel. 
For $t \in[0,T]$, $x,d \in\R$ 
we denote by $\cA^{fv}_t(x,d)$ the set of all adapted, c\`adl\`ag, finite variation  processes $X=(X_s)_{s\in[t-,T]}$ satisfying
$X_{t-}=x$, $X_T=\xi$, and
%$$
%X_{t-}=x,\;\;X_T=\xi,
%$$ 
%and
\begin{enumerate}
 	\item[(A1)] \qquad $E\left[ \int_t^T \gamma_s^{-1} (D_s^X)^2 ds \right] < \infty$,
 	\item[(A2)] \qquad $E\left[ \left( \int_t^T (D_s^X)^4 \gamma_s^{-2} \eta_s^2 ds \right)^{\frac12} \right] < \infty$,
 	\item[(A3)] \qquad $E\left[ \left( \int_t^T (D_s^X)^4 \gamma_s^{-2} \sigma_s^2 ds \right)^{\frac12} \right] < \infty$.
\end{enumerate}
Any element $X\in \cA^{fv}_t(x,d)$ is called a \textit{finite variation execution strategy}. The process $D=D^X$ defined via \eqref{eq:deviationdyndR} is called the associated \textit{deviation process}.

For $t\in [0,T]$, $x,d\in\R$, $X \in \cA^{fv}_t(x,d)$ and associated $D^X$, 
the cost functional $\fvJ$ is given by 
\begin{equation}\label{eq:defcostfct1}
	\fvJ_{t}(x,d,X) = E_t\left[ \int_{[t,T]} D^X_{s-} dX_s + \frac12 \int_{[t,T]} \Delta X_s \gamma_s dX_s + \int_t^T \lambda_s\gamma_s \left(X_s - \zeta_s \right)^2 ds \right].
\end{equation}
(see 
the proofs of 
\Cref{propo:representationcostfct} and \Cref{lem:scaledhiddendevdyn} for well-definedness). 
The finite variation stochastic control problem consists of minimizing the cost functional $\fvJ$ over
$X\in\cA^{fv}_t(x,d)$.

\subsection{Alternative representations for the cost functional and the deviation process}\label{sec:alt_rep}

For $t\in[0,T]$ we introduce an auxiliary process  $\nu=(\nu_s)_{s\in[t,T]}$. It is defined to be the solution of 
\begin{equation}\label{eq:defnu}
	d\nu_s = \nu_s d\left(R_s + [R]_s\right), \quad s \in [t,T], \quad \nu_t=1.
\end{equation} 
Observe that the inverse is given by  
\begin{equation}\label{eq:nuinvdyn}
	d\nu_s^{-1} = - \nu_s^{-1} dR_s, \quad s\in[t,T], \quad \nu_t^{-1}=1. 
\end{equation}

	\begin{remark}\label{rem:120323a1}
	Let $t\in [0,T]$, $d \in \R$. 
	With the definition of $\nu$ in~\eqref{eq:defnu}, it holds for all adapted, c\`adl\`ag, finite variation processes $X=(X_s)_{s\in[t-,T]}$ 
	that the solution $D^X=(D^X_s)_{s\in[t-,T]}$ of the linear SDE~\eqref{eq:deviationdyndR} 
	reads
	$D_s^X = \nu_s^{-1} ( d + \int_{[t,s]} \nu_r \gamma_r dX_r )$, $s\in[t,T].$
	\end{remark}

\begin{propo}\label{propo:costfunctionalpart}
	Let $t \in [0,T]$ and $x,d \in \R$. 
	Suppose that $X=(X_s)_{s\in[t-,T]}$ is an adapted, c\`adl\`ag, finite variation process with $X_{t-}=x$ and with associated 
	process $D^X$ defined by \eqref{eq:deviationdyndR}. 
	It then holds that 
	\begin{equation}\label{eq:costfunctionalpart}
		\begin{split}
			& \int_{[t,T]} D^X_{s-}dX_s + \frac12 \int_{[t,T]} \Delta X_s \gamma_s dX_s 
			= \frac12 \left( \gamma_T^{-1} (D^X_T)^2 - \gamma_t^{-1} d^2 - \int_{t}^T (D^X_s)^2\nu_s^2 d\left(\nu^{-2}_s\gamma_s^{-1}\right) \right)
		\end{split}
	\end{equation}
	and 
	\begin{equation}\label{eq:deviationrewritten}
		\begin{split}
			D^X_r 
			& = \gamma_r X_r + \nu_r^{-1} \left( d - \gamma_t x - \int_t^r X_s d(\nu_s\gamma_s)  \right)
			, \quad r \in [t,T].
		\end{split}
	\end{equation}
\end{propo}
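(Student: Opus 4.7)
The plan is to establish both identities by direct applications of It\^o's formula, exploiting the continuity of $R$ and $\gamma$ and the defining property of $\nu$ as integrating factor.

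For \eqref{eq:deviationrewritten} I would first observe that $Y_s := D^X_s - \gamma_s X_s$ has continuous paths on $[t,T]$: indeed $\Delta D^X_s = \gamma_s \Delta X_s = \Delta(\gamma_s X_s)$ because $\gamma$ is continuous. Applying the product rule to $\nu_s D^X_s$, substituting $dD^X_s = -D^X_{s-} dR_s + \gamma_s dX_s$ and $d\nu_s = \nu_s d(R_s + [R]_s)$, and using $d[\nu, D^X]_s = -\nu_s D^X_s \eta_s^2 ds$ coming from the Brownian parts, the $dR$-terms cancel and the contribution $\nu_s D^X_s \eta_s^2 ds$ from $\nu d[R]$ cancels with $d[\nu, D^X]$, leaving
\begin{equation*}
d(\nu_s D^X_s) = \nu_s \gamma_s dX_s.
\end{equation*}
Integrating from $t-$ to $r$ (with $\nu_{t-}D^X_{t-} = d$) produces $\nu_r D^X_r = d + \int_{[t,r]} \nu_s \gamma_s dX_s$, and an integration by parts, valid because $\nu\gamma$ is continuous and $X$ is of finite variation so that $[X, \nu\gamma] = 0$, recasts this as $\nu_r D^X_r = d + \nu_r \gamma_r X_r - \gamma_t x - \int_t^r X_s d(\nu_s \gamma_s)$. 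Dividing by $\nu_r$ yields~\eqref{eq:deviationrewritten}.

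For \eqref{eq:costfunctionalpart} I would apply It\^o's formula to $\gamma_s^{-1}(D^X_s)^2$. Using $d(D^X_s)^2 = 2 D^X_{s-} dD^X_s + d[D^X]_s$ together with the product rule for the continuous $\gamma^{-1}$ produces
\begin{equation*}
d(\gamma_s^{-1}(D^X_s)^2) = 2 D^X_{s-} dX_s + (D^X_{s-})^2 (d\gamma_s^{-1} - 2 \gamma_s^{-1} dR_s) + \gamma_s^{-1} d[D^X]_s + d[\gamma^{-1}, (D^X)^2]_s .
\end{equation*}
Integrating from $t-$ to $T$ and solving for $\int_{[t,T]} D^X_{s-} dX_s$, the discontinuous bracket piece $\int_{[t,T]} \gamma_s^{-1} d[D^X]^d_s = \sum_{t \le s \le T} \gamma_s (\Delta X_s)^2$ is exactly twice $\frac12 \int_{[t,T]} \Delta X_s \gamma_s dX_s$ and so moves to the left-hand side. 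The remaining right-hand side consists of $(D^X_s)^2$ integrated against $d\gamma_s^{-1} - 2\gamma_s^{-1} dR_s$, plus the continuous bracket $d[D^X]^c_s = (D^X_s)^2 \eta_s^2 ds$ and the covariation $d[\gamma^{-1},(D^X)^2]_s = 2\gamma_s^{-1}(D^X_s)^2 \eta_s \sigma_s \rcor_s ds$. A separate It\^o computation based on $d\nu_s^{-2} = -2\nu_s^{-2} dR_s + \nu_s^{-2}\eta_s^2 ds$ and $d[\nu^{-2},\gamma^{-1}]_s = 2\nu_s^{-2}\gamma_s^{-1}\eta_s\sigma_s\rcor_s ds$ then confirms
\begin{equation*}
\nu_s^2 d(\nu_s^{-2}\gamma_s^{-1}) = d\gamma_s^{-1} - 2\gamma_s^{-1} dR_s + \gamma_s^{-1}\eta_s^2 ds + 2\gamma_s^{-1}\eta_s\sigma_s\rcor_s ds,
\end{equation*}
which is precisely the collected integrand, so~\eqref{eq:costfunctionalpart} follows after dividing by two.

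The main technical hurdle will be the careful jump bookkeeping at the left endpoint $s = t$ (where $X$ may already jump and contribute to the $\int_{[t,T]}$ integrals) and the clean separation $[D^X] = [D^X]^c + [D^X]^d$ with $d[D^X]^c_s = (D^X_s)^2 \eta_s^2 ds$ and jump atoms $\gamma_s^2 (\Delta X_s)^2$. Once this is in place, the matching of the two It\^o expansions -- one for $\gamma^{-1}(D^X)^2$ and one for $\nu^{-2}\gamma^{-1}$ -- is essentially forced by the defining SDE $d\nu = \nu d(R + [R])$ of the integrating factor, which has been tailored so that the It\^o correction terms align.
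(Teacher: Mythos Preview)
Your argument is correct. For \eqref{eq:deviationrewritten} you proceed exactly as the paper does: establish $d(\nu_s D^X_s)=\nu_s\gamma_s\,dX_s$ and then integrate by parts.

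For \eqref{eq:costfunctionalpart} the routes diverge. The paper introduces the auxiliary process $\widetilde D_s=\nu_s D^X_s$, observes that it is of \emph{finite variation}, and then rewrites the left-hand side as $\frac12\int_{[t,T]}(2\widetilde D_{s-}+\Delta\widetilde D_s)\,\varphi_s\,d\widetilde D_s$ with $\varphi=\nu^{-2}\gamma^{-1}$; an integration by parts on $\widetilde D^2\varphi$ then delivers the result without ever invoking the specific coefficients $\eta,\sigma,\rcor$. Your approach instead applies It\^o's formula directly to $\gamma^{-1}(D^X)^2$, splits $[D^X]$ into its jump and continuous parts, and verifies termwise that the residual integrand equals $\nu^2\,d(\nu^{-2}\gamma^{-1})$. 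This is equally valid; the trade-off is that you use the explicit Brownian structure (the formulas $d[R]_s=\eta_s^2\,ds$ and $d[\gamma^{-1},R]_s=-\gamma_s^{-1}\sigma_s\eta_s\rcor_s\,ds$), whereas the paper's proof is deliberately kept at the level of abstract continuous semimartingales $R$ and $\gamma$ and would go through verbatim for any such pair. Your computation can of course be rewritten in that abstract form by keeping $d[R]$ and $d[\gamma^{-1},R]$ unsubstituted, so the difference is one of presentation rather than scope.
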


As a consequence of \Cref{propo:costfunctionalpart}, and relying on (A1)--(A3), we can rewrite the cost functional $\fvJ$ as follows.\footnote{Analogues of \Cref{propo:representationcostfct} are present in the literature in other related settings; see, e.g., Lemmas 7.4 and~8.6 in \cite{fruth2014optimal} and the proof of Lemma~5.3 in Appendix~B of \cite{fruth2019optimal}.
A small technical point, which might be worth noting, is that we present a somewhat different proof below.
The idea in \cite{fruth2014optimal,fruth2019optimal} is to derive an analogue of~\eqref{eq:costfunctionalpart} by applying the substitution $dX_s=\gamma_s^{-1}(dD^X_s+D^X_sdR_s)$ and then to compute the expectation.
Exactly the same idea would also work in our present setting but it would result in more sustained calculations and, moreover, the right-hand side of~\eqref{eq:costfunctionalpart} would then look rather different (but this would be an equivalent representation, of course).
The reason for this is that the process $R$, hence $D^X$, can have nonvanishing quadratic variation.
Here we, essentially, express everything not through $D^X$ but rather through $\nu D^X$, which has finite variation by \Cref{rem:120323a1} (as $X$ has finite variation here).
This allows to reduce calculations and provides a somewhat more compact form of~\eqref{eq:costfunctionalpart}.}
To shorten notation, we introduce 
the process $\kappa=(\kappa_s)_{s\in[0,T]}$ defined by 
\begin{equation}\label{eq:defkappa}
\kappa_s = \frac12 \big( 2\rho_s+\mu_s-\sigma_s^2-\eta_s^2 - 2\sigma_s \eta_s \rcor_s \big), \quad s \in [0,T].
\end{equation}

\begin{propo}\label{propo:representationcostfct}
	Let $t \in [0,T]$ and $x,d \in \R$.
	Suppose that $X\in\cA_t^{fv}(x,d)$ with associated deviation process $D^X$ defined by \eqref{eq:deviationdyndR}. 
	It then holds that $\fvJ_t(x,d,X)$ in~\eqref{eq:defcostfct1} 
	admits the representation 
	\begin{equation}\label{eq:costfctrewritten}
		\begin{split}
		\fvJ_t(x,d,X) & = 
		\frac12 E_t\!\left[ \gamma_T^{-1} (D_T^X)^2 + \!\int_t^T \! (D^X_s)^2 \gamma_s^{-1} 2\kappa_s ds + \! \int_t^T \!2 \lambda_s \gamma_s \left(X_s - \zeta_s \right)^2 ds\right] 
		\! - \frac{d^2}{2\gamma_t}  \text{ a.s.}
		\end{split} 
	\end{equation}
\end{propo}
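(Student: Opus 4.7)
The plan is to combine \Cref{propo:costfunctionalpart} with It\^o's formula applied to $\nu^{-2}\gamma^{-1}$, and then show that the resulting local martingale terms vanish in conditional expectation thanks to the integrability conditions (A2)--(A3). In detail, I would proceed as follows.

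First, I would take the expression for $\fvJ_t(x,d,X)$ from \eqref{eq:defcostfct1} and replace the two integrals with respect to $X$ by the right-hand side of \eqref{eq:costfunctionalpart}. This immediately yields the boundary terms $\tfrac12 \gamma_T^{-1}(D_T^X)^2$ and $-\tfrac{d^2}{2\gamma_t}$ of the target representation \eqref{eq:costfctrewritten}, while the term $\int_t^T \lambda_s\gamma_s(X_s-\zeta_s)^2\,ds$ is already there and does not need to be touched. So what remains is to compute
\[
-\tfrac12 E_t\left[\int_t^T (D^X_s)^2 \nu_s^2\, d\left(\nu_s^{-2}\gamma_s^{-1}\right)\right]
\]
and identify it with the corresponding piece of \eqref{eq:costfctrewritten}.

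Second, I would apply It\^o's formula to get $\nu_s^2\, d(\nu_s^{-2}\gamma_s^{-1})$. Using \eqref{eq:nuinvdyn} together with \eqref{eq:resilience} gives
\[
d\nu_s^{-2} = \nu_s^{-2}(-2\rho_s+\eta_s^2)\,ds - 2\nu_s^{-2}\eta_s\, dW^R_s,
\]
and using \eqref{eq:priceimpact} gives
\[
d\gamma_s^{-1} = \gamma_s^{-1}(-\mu_s+\sigma_s^2)\,ds - \gamma_s^{-1}\sigma_s\, dW^1_s.
\]
Applying the product rule, noting that $d[W^R,W^1]_s=\rcor_s\,ds$, and then multiplying by $\nu_s^2$ yields
\[
\nu_s^2\, d(\nu_s^{-2}\gamma_s^{-1}) = \gamma_s^{-1}\bigl(-2\rho_s-\mu_s+\sigma_s^2+\eta_s^2+2\sigma_s\eta_s\rcor_s\bigr)\,ds + dM_s,
\]
where $dM_s = -\gamma_s^{-1}\sigma_s\, dW^1_s - 2\gamma_s^{-1}\eta_s\, dW^R_s$. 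Substituting this into the expectation above and flipping the sign produces exactly the drift
$\gamma_s^{-1}\bigl(2\rho_s+\mu_s-\sigma_s^2-\eta_s^2-2\sigma_s\eta_s\rcor_s\bigr)$ appearing in~\eqref{eq:costfctrewritten}.

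Third, the main (and really only non-routine) obstacle is to justify that
\[
E_t\left[\int_t^T (D^X_s)^2\, dM_s\right] = 0.
\]
The local martingale in question has quadratic variation controlled by
\[
\int_t^T (D^X_s)^4 \gamma_s^{-2}\sigma_s^2\, ds \quad\text{and}\quad \int_t^T (D^X_s)^4 \gamma_s^{-2}\eta_s^2\, ds,
\]
whose square roots are integrable by (A3) and (A2) respectively. By the Burkholder--Davis--Gundy inequality this implies that $\int_t^\cdot (D^X_s)^2\sigma_s\gamma_s^{-1}\, dW^1_s$ and $\int_t^\cdot (D^X_s)^2\eta_s\gamma_s^{-1}\, dW^R_s$ are uniformly integrable martingales on $[t,T]$, so their conditional expectations given $\cF_t$ at time $T$ are zero.

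Finally, assembling the boundary terms from \eqref{eq:costfunctionalpart}, the drift obtained from It\^o's formula, the vanishing martingale parts, and the risk term $\int_t^T\lambda_s\gamma_s(X_s-\zeta_s)^2\,ds$ (which by \eqref{eq:conditionxi} together with boundedness of $\lambda$ and the $X_s^2$-contribution, handled as in the well-definedness verification, is integrable) gives exactly the right-hand side of~\eqref{eq:costfctrewritten}. Along the way the well-definedness of every individual conditional expectation follows from (A1)--(A3) combined with the bounds on $\mu,\sigma,\rho,\eta,\lambda$ and the assumptions~\eqref{eq:conditionxi} on $\xi$ and $\zeta$.
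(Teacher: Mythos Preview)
Your proposal is correct and follows essentially the same approach as the paper: both proofs apply \Cref{propo:costfunctionalpart}, compute the It\^o decomposition of $\nu_s^{2}\,d(\nu_s^{-2}\gamma_s^{-1})$ to identify the drift $\gamma_s^{-1}(2\rho_s+\mu_s-\sigma_s^2-\eta_s^2-2\sigma_s\eta_s\rcor_s)$, and then use BDG together with (A2)--(A3) to kill the $dW^1$ and $dW^R$ integrals in conditional expectation. The only cosmetic difference is that the paper first expands $d(\nu_s^{-2}\gamma_s^{-1})$ via iterated integration by parts in terms of $dR$, $d\gamma^{-1}$, and their brackets before substituting the explicit dynamics, whereas you compute $d\nu_s^{-2}$ directly and then apply the product rule; the resulting formula and the rest of the argument are identical.
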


\subsection{Progressively measurable execution strategies}\label{sec:prog_meas_strat}

We point out that the right-hand side of \eqref{eq:costfctrewritten} is also well-defined for progressively measurable processes $X$ satisfying an appropriate integrability condition 
and with associated deviation $D$ defined by \eqref{eq:deviationrewritten} for which one assumes (A1). 
This motivates the following extension of the setting from~\Cref{sec:fvstrat}.

For $t \in [0,T]$, $x,d \in \R$ and a
progressively measurable process $X=(X_s)_{s \in [t-,T]}$ such that $\int_t^T X_s^2 ds < \infty$ a.s.\ and $X_{t-}=x$, 
we define 
the process $D^X=(D^X_s)_{s \in [t-,T]}$ by 
\begin{equation}\label{eq:defdeviationpm}
	D^X_s = \gamma_s X_s + \nu_s^{-1} \left( d - \gamma_t x - \int_t^s X_r d(\nu_r\gamma_r)  \right), \quad s \in [t,T], \quad D^X_{t-}=d 
\end{equation}
(recall $\nu$ from \eqref{eq:defnu}).
Notice that the condition $\int_t^T X_s^2 ds<\infty$ a.s.\ ensures that the stochastic integral in~\eqref{eq:defdeviationpm} is well-defined. 
Again, we sometimes write $D$ instead of~$D^X$.
Further, for $t\in[0,T]$, $x,d \in \R$, let $\cA^{pm}_t(x,d)$ be the set of
(equivalence classes of) 
progressively measurable processes $X=(X_s)_{s \in [t-,T]}$ with
$X_{t-}=x$ and $X_T=\xi$ 
that satisfy $\int_t^T X_s^2 ds < \infty$ a.s.\ and such that condition (A1) holds true for $D^X$ defined by~\eqref{eq:defdeviationpm}.
To be precise, we stress that the equivalence classes for $\cA_t^{pm}(x,d)$ are understood with respect to the equivalence relation 
\begin{align}
X^{(1)}\sim X^{(2)}
\text{ means }
&X^{(1)}_.=X^{(2)}_.\;\;dP\times ds\text{-a.e.\ on }\Omega\times[t,T],
\notag\\
&X^{(1)}_{t-}=X^{(2)}_{t-}\,(=x)
\text{ and }
X^{(1)}_{T}=X^{(2)}_{T}\,(=\xi).
\label{eq:250322a1}
\end{align}
Any element $X\in \cA^{pm}_t(x,d)$ is called a \textit{progressively measurable execution strategy}. Again the process $D=D^X$ now defined via \eqref{eq:defdeviationpm} is called the associated \textit{deviation process}. Clearly, we have that $\cA^{fv}_t(x,d) \subseteq \cA^{pm}_t(x,d)$.

Given $t \in [0,T]$, $x,d \in \R$, and $X \in \cA^{pm}_t(x,d)$ with associated $D^X$ (see \eqref{eq:defdeviationpm}), we define the cost functional $\pmJ$ by 
\begin{equation}\label{eq:defcostfctpm}
	\begin{split}
		\pmJ_t(x,d,X) & = 
		\frac12 E_t\!\left[ \gamma_T^{-1} (D_T^X)^2 + \!\int_t^T \! (D^X_s)^2 \gamma_s^{-1} 2\kappa_s ds + \! \int_t^T \! 2 \lambda_s \gamma_s \left(X_s - \zeta_s \right)^2 ds\right] 
		\! - \frac{d^2}{2\gamma_t} .
	\end{split} 
\end{equation}

Observe that we have the following corollary of \Cref{propo:costfunctionalpart} and \Cref{propo:representationcostfct}.

\begin{corollary}\label{cor:costfctonfvequal}
	Let $t \in [0,T]$, $x,d \in\R$, and $X \in \cA^{fv}_t(x,d)$ with associated deviation process $D^X$ given by \eqref{eq:deviationdyndR}. It then holds that $X \in \cA^{pm}_t(x,d)$, that $D^X$ satisfies \eqref{eq:defdeviationpm}, and that  $\fvJ_t(x,d,X)=\pmJ_t(x,d,X)$.
\end{corollary}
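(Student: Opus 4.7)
The plan is to verify the three claims in order, each essentially by reading off the relevant earlier result. For the assertion $X \in \cA^{pm}_t(x,d)$, I would first note that $X \in \cA^{fv}_t(x,d)$ is adapted and c\`adl\`ag, hence progressively measurable, and satisfies the boundary conditions $X_{t-}=x$ and $X_T=\xi$ by the definition of $\cA^{fv}_t(x,d)$. Moreover, c\`adl\`ag paths are pathwise bounded on the compact interval $[t-,T]$, so $\int_t^T X_s^2\, ds < \infty$ a.s.\ is automatic. This also ensures that the stochastic integral appearing in \eqref{eq:defdeviationpm} is well-defined.

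Next, I would invoke \Cref{propo:costfunctionalpart}, specifically the representation \eqref{eq:deviationrewritten}, whose hypotheses are met since $X$ is adapted, c\`adl\`ag, of finite variation, with $X_{t-}=x$. This identity states that the process $D^X$ defined through the dynamics \eqref{eq:deviationdyndR} coincides pathwise with the expression on the right-hand side of \eqref{eq:defdeviationpm}. In other words, the ``finite variation'' deviation and the ``progressively measurable'' deviation agree for any $X \in \cA^{fv}_t(x,d)$. Consequently, condition (A1) required for membership in $\cA^{pm}_t(x,d)$ is literally the same condition (A1) already imposed on elements of $\cA^{fv}_t(x,d)$, and thus holds by assumption. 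This simultaneously establishes $X \in \cA^{pm}_t(x,d)$ and the claim that $D^X$ satisfies \eqref{eq:defdeviationpm}.

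Finally, applying \Cref{propo:representationcostfct} rewrites $\fvJ_t(x,d,X)$ in the form \eqref{eq:costfctrewritten}. A termwise comparison with the definition of $\pmJ_t(x,d,X)$ in \eqref{eq:defcostfctpm}, combined with the identification of the two candidate deviation processes $D^X$ from the previous step, yields $\fvJ_t(x,d,X) = \pmJ_t(x,d,X)$. I do not anticipate a real obstacle: the corollary is a bookkeeping consequence of the two preceding propositions, and its role is merely to certify that the $\pmJ$-formulation is a genuine (i.e.\ set-theoretic and valuewise) extension of the original $\fvJ$-formulation from $\cA^{fv}_t(x,d)$ to $\cA^{pm}_t(x,d)$.
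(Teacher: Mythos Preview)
Your proposal is correct and follows exactly the route the paper intends: the corollary is stated there without proof, merely as a direct consequence of \Cref{propo:costfunctionalpart} and \Cref{propo:representationcostfct}, and you have spelled out precisely how each claim is read off from those two results.
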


\subsection{The hidden deviation process}\label{sec:hidden_dev}

For $t \in [0,T]$, $x,d \in \R$, and $X \in \cA^{pm}_t(x,d)$ with associated deviation process $D^X$, we define $\sfi^X=(\sfi^X_s)_{s \in [t,T]}$ by 
$\sfi^X_s = D^X_s - \gamma_s X_s$, $s\in[t,T]$. 
Observe that if the investor followed a finite variation execution strategy $X\in \cA^{fv}_t(x,d)$ until time $s\in [t,T]$ and then decided to sell $X_{s}$ units of the asset ($X_{s}<0$ means buying) at time $s$, then by \eqref{eq:deviationdyndR} the resulting deviation at time $s$ would equal $D^X_s - \gamma_s X_s$. The value of $\sfi^X_s$ hence represents the hypothetical deviation if the investor decides to close the position at time $s \in [t,T]$.
We therefore call $\sfi^X$ the \textit{hidden deviation process}. 
Despite $X\in \cA^{pm}_t(x,d)$ and $D^X$ in general being discontinuous, the hidden deviation process $\sfi^X$ is always continuous. 
This can be seen from \eqref{eq:defdeviationpm} and the fact that $R$ (hence also $\nu$) and $\gamma$ are continuous. 
In the case of a finite variation execution strategy $X\in \cA^{fv}_t(x,d)$, it holds that $d\sfi^X_s = -D_sdR_s - X_s d\gamma_s$, $s \in [t,T]$. In particular, the infinitesimal change of the hidden deviation is driven by the changes of the resilience process and the price impact process. 

For $t \in [0,T]$, $x,d \in \R$, and $X \in \cA^{pm}_t(x,d)$, we 
furthermore introduce the \emph{scaled hidden deviation}\footnote{From the mathematical viewpoint, the scaled hidden deviation plays an extremely important role in what follows. It is, therefore, instructive to see in what kind of units it is measured. The meaning of $X$ is quantity (of shares), while both $D^X$ and $\gamma$ are measured in \$. Thus, the scaled hidden deviation $\ssfi^X$ is measured in $\sqrt{\$}$.}
$\ssfi^X=(\ssfi^X_s)_{s \in [t,T]}$ defined by
\begin{equation}\label{eq:scaledhiddendevdef}
	\ssfi_s^X = \gamma_s^{-\frac12} \sfi^X_s = \gamma_s^{-\frac12} D^X_s - \gamma_s^{\frac12} X_s, \quad s \in [t,T].
\end{equation}
Also for $\sfi^X$ and $\ssfi^X$ we sometimes simply write $\sfi$ and $\ssfi$, respectively.
Note that, due to \eqref{eq:defdeviationpm}, it holds that  
$\ssfi^X_s = \gamma_s^{-\frac12} \nu_s^{-1} ( d - \gamma_t x - \int_t^s X_r d(\nu_r\gamma_r) )$, $s\in [t,T]$.

We next show that the scaled hidden deviation process satisfies a linear SDE and an $L^2$-bound. Moreover, we derive a representation of $\pmJ$ in terms of the scaled hidden deviation process.
\begin{lemma}\label{lem:scaledhiddendevdyn}
	Let $t \in [0,T]$, $x,d \in \R$, and $X \in \cA^{pm}_t(x,d)$. 
	Then it holds that 
	\begin{equation}\label{eq:dynscaledhiddendev}
		\begin{split}
			d\ssfi^X_s & = \left(\frac12 \left( \mu_s - \frac14 \sigma_s^2 \right) \ssfi_s^X
			- \frac12 \left( 2(\rho_s+\mu_s) - \sigma_s^2 - \sigma_s\eta_s\rcor_s \right)  \gamma_s^{-\frac12} D^X_s \right)ds\\
			&\quad +\left(\frac12 \sigma_s \ssfi^X_s - (\sigma_s + \eta_s \rcor_s) \gamma_s^{-\frac12} D^X_s \right) dW^1_s
			- \eta_s \sqrt{1-\rcor_s^2} \gamma_s^{-\frac12} D^X_s dW_s^2, 
			\quad s \in [t,T],\\
			\ssfi^X_t & = \frac{d}{\sqrt{\gamma_t}} - \sqrt{\gamma_t}x,
		\end{split}
	\end{equation}
	that $E[\sup_{s\in[t,T]} (\ssfi_s^X)^2]<\infty$,
	and that
	\begin{equation}\label{eq:quadr_pmj}
	\begin{split}
		\pmJ_t(x,d,X)
		& = \frac12 E_t\bigg[ \big(\ssfi^X_T + \sqrt{\gamma_T} \xi \big)^2 
		+ \int_t^T 2(\kappa_s+ \lambda_s) \gamma_s^{-1}(D_s^X)^2
		ds \bigg] - \frac{d^2}{2\gamma_t} \\
		& 
		\quad + E_t\bigg[ \int_t^T \left(  \lambda_s \left(\ssfi^X_s + \sqrt{\gamma_s} \zeta_s \right)^2 - 2 \lambda_s \left(\ssfi^X_s + \sqrt{\gamma_s} \zeta_s \right) \gamma_s^{-\frac 12}D_s^X  \right) ds 
		\bigg] .
	\end{split}
	\end{equation}
\end{lemma}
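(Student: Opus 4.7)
My plan is to derive the dynamics \eqref{eq:dynscaledhiddendev} by direct application of Itô's formula to \eqref{eq:scaledhiddendevasprod}, then use the linear structure of the resulting SDE together with (A1) to obtain the $L^2$ bound, and finally obtain \eqref{eq:quadr_pmj} by purely algebraic substitution into \eqref{eq:defcostfctpm}.

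The cleanest starting point is to write $\ssfi^X_s = \gamma_s^{-1/2}\nu_s^{-1}M_s$ with $M_s = d - \gamma_t x - \int_t^s X_r\,d(\nu_r\gamma_r)$, noting from \eqref{eq:defdeviationpm} that $\sfi^X_s = D^X_s - \gamma_s X_s = \nu_s^{-1}M_s$. In this factorization $X$ appears only as an integrand, so each factor is a well-defined continuous semimartingale. The dynamics of $\gamma^{-1/2}$ follow from Itô's formula applied to \eqref{eq:priceimpact}, those of $\nu^{-1}$ are given by \eqref{eq:nuinvdyn}, and $d(\nu_s\gamma_s)$ follows from the product rule. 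Applying the product rule twice (first to $\nu^{-1}M$, then to $\gamma^{-1/2}\cdot(\nu^{-1}M)$), collecting the covariation terms arising from the common $W^1$-driver of $\gamma$ and $R$ (via the correlation $\rcor$), and then substituting $\gamma_s^{1/2}X_s = \gamma_s^{-1/2}D^X_s - \ssfi^X_s$ everywhere $X$ appears yields \eqref{eq:dynscaledhiddendev}.

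For the $L^2$-bound, the SDE \eqref{eq:dynscaledhiddendev} is linear in $\ssfi^X$ with bounded coefficients $\tfrac12(\mu_s-\tfrac14\sigma_s^2)$ and $\tfrac12\sigma_s$, plus a forcing term whose $ds$-, $dW^1$- and $dW^2$-coefficients are each bounded multiples of $\gamma_s^{-1/2}D^X_s$. By (A1), $\gamma^{-1/2}D^X \in \cL^2_t$. The variation-of-constants formula represents $\ssfi^X$ as a stochastic exponential (bounded above by an exponentially integrable random variable since $\mu,\sigma$ are bounded) times an Itô process whose integrands lie in $\cL^2_t$; a standard application of the Burkholder--Davis--Gundy inequality and Gr\"onwall's lemma then gives $E[\sup_{s\in[t,T]}(\ssfi^X_s)^2]<\infty$.

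The representation \eqref{eq:quadr_pmj} follows algebraically. From \eqref{eq:scaledhiddendevdef} one has $\gamma_s^{1/2}(X_s-\zeta_s) = \gamma_s^{-1/2}D^X_s - \ssfi^X_s - \sqrt{\gamma_s}\zeta_s$, and since $X_T=\xi$ also $\gamma_T^{-1/2}D^X_T = \ssfi^X_T + \sqrt{\gamma_T}\xi$. Squaring and inserting into \eqref{eq:defcostfctpm} replaces the terminal term $\gamma_T^{-1}(D^X_T)^2$ by $(\ssfi^X_T+\sqrt{\gamma_T}\xi)^2$ and expands $\lambda_s\gamma_s(X_s-\zeta_s)^2$ into the three pieces appearing in \eqref{eq:quadr_pmj}; well-definedness of all conditional expectations (in particular the cross-term $\lambda_s(\ssfi^X_s+\sqrt{\gamma_s}\zeta_s)\gamma_s^{-1/2}D^X_s$) follows from (A1), the $L^2$ bound just proved, the boundedness of $\lambda$, and \eqref{eq:conditionxi} via Cauchy--Schwarz. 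The principal obstacle is bookkeeping in the Itô derivation of \eqref{eq:dynscaledhiddendev}: the covariations between $\gamma^{-1/2}$, $\nu^{-1}$ and $M$ generate many $\sigma\eta\rcor$- and $\eta^2$-terms that must cancel to produce the compact form on the right-hand side, and checking that the $\ssfi^X_s$-drift collapses to $\tfrac12(\mu_s-\tfrac14\sigma_s^2)$ is a useful sanity check.
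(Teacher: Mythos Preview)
Your proposal is correct and follows essentially the same route as the paper: the paper likewise writes $\ssfi^X_s=\alpha_s\beta_s$ with $\alpha_s=\gamma_s^{-1/2}\nu_s^{-1}$ and $\beta_s=d-\gamma_t x-\int_t^s X_r\,d(\nu_r\gamma_r)$, computes $d(\alpha_s\beta_s)$ by integration by parts (packaged as an auxiliary lemma rather than your two-step product rule), substitutes $-\gamma^{1/2}X=\ssfi-\gamma^{-1/2}D$, cites a standard linear-SDE moment estimate for the $L^2$ bound, and obtains \eqref{eq:quadr_pmj} by the same algebraic expansion of $\gamma_s(X_s-\zeta_s)^2$ and $\gamma_T^{-1}(D^X_T)^2$ that you describe.
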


\subsection{Continuous extension of the cost functional}\label{sec:continextensioncostfct}

\Cref{cor:costfctonfvequal} states that for finite variation execution strategies, the cost functionals $\fvJ$ and $\pmJ$ are the same. 
In this subsection we show that $\pmJ$ can be considered as an extension 
of $\fvJ$ to progressively measurable strategies; i.e., 
we introduce a metric $\md$ on $\cA_t^{pm}(x,d)$ and
show that $\pmJ_t(x,d,X)$ is continuous in the strategy $X \in \cA_t^{pm}(x,d)$ (the first part of \Cref{thm:contextcostfct}),
that $\cA^{fv}_t(x,d)$ is dense in $\cA^{pm}_t(x,d)$
(the second part of \Cref{thm:contextcostfct})
and that the metric space $(\cA_t^{pm}(x,d),\md)$ is complete
(the third part of \Cref{thm:contextcostfct}).
The first and the second parts of \Cref{thm:contextcostfct} mean that, under the metric $\md$, $J_t^{pm}(x,d,\cdot)$ is a unique continuous extension of $J_t^{fv}(x,d,\cdot)$ from $\cA_t^{fv}(x,d)$ onto $\cA_t^{pm}(x,d)$.
The third part of \Cref{thm:contextcostfct} means that, under the metric $\md$, $\cA_t^{pm}(x,d)$ is the largest space where such a continuous extension is uniquely determined by $J_t^{fv}(x,d,\cdot)$ on $\cA_t^{fv}(x,d)$.
This is because the completeness of $(\cA_t^{pm}(x,d),\md)$ is equivalent to the following statement:
For any metric space $(\wh\cA_t(x,d),\wh\md)$ containing $\cA_t^{pm}(x,d)$ and such that $\wh\md|_{\cA_t^{pm}(x,d)^2}=\md$, it holds that the set $\cA_t^{pm}(x,d)$ is closed in $\wh\cA_t(x,d)$.

For $t \in [0,T]$, $x,d\in\R$, and $X,Y \in \cA_t^{pm}(x,d)$ with associated deviation processes $D^X$, $D^Y$ defined by \eqref{eq:defdeviationpm}, we define  
\begin{equation}\label{eq:defmetriconpm}
	\md(X,Y) = 
	\left( E\left[ \int_t^T (D_s^X - D_s^Y)^2 \gamma_s^{-1} ds \right] \right)^{\frac12} .
\end{equation}
Identifying any processes that are equal $dP\times ds|_{[t,T]}$-a.e., this indeed is a metric on $\cA_t^{pm}(x,d)$, see \Cref{lem:ismetric}. 

Note that, for fixed $t\in [0,T]$ and $x,d \in \R$, we may consider the cost functional~\eqref{eq:defcostfctpm} as a function 
$\pmJ_t(x,d,\cdot) \colon (\cA_t^{pm}(x,d),\md) \to (L^1(\Omega,\cF_t,P),\lVert \cdot \rVert_{L^1}).$
Indeed, using (A1), \Cref{lem:scaledhiddendevdyn}, \eqref{eq:conditionxi}, and boundedness of the input processes, we see that 
$\pmJ_t(x,d,X) \in L^1(\Omega,\cF_t,P)$ 
for all $X \in \cA_t^{pm}(x,d)$.

\begin{theo}\label{thm:contextcostfct}
	Let $t\in[0,T]$ and $x,d\in\R$. 
	
	(i) 
	Suppose that $X\in\cA^{pm}_t(x,d)$.
	For every sequence $(X^n)_{n\in\N}$ in $\cA^{pm}_t(x,d)$ with
	$\lim_{n \to \infty} \md(X^n,X) = 0$ it holds that
	$\lim_{n\to\infty} \lVert \pmJ_t(x,d,X^{n}) - \pmJ_t(x,d,X) \rVert_{L^1} = 0$.
	
	(ii)
	For any $X\in\cA^{pm}_t(x,d)$
	there exists a sequence $(X^n)_{n\in\N}$ in $\cA^{fv}_t(x,d)$ such that $\lim_{n \to \infty} \md(X^n,X) = 0$.
	In particular, it holds that
\begin{equation}\label{eq:inf_equal}
\essinf_{X\in\cA^{fv}_t(x,d)}\fvJ_t(x,d,X)=
\essinf_{X\in\cA^{pm}_t(x,d)}\pmJ_t(x,d,X).
\end{equation}	
	
	(iii) 
	For any Cauchy sequence $(X^n)_{n\in\N}$ in $(\cA_t^{pm}(x,d),\md)$ there exists some $X^0 \in \cA_t^{pm}(x,d)$ such that $\lim_{n\to\infty} \md(X^n,X^0)=0$.
\end{theo}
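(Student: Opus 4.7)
I would work from the quadratic representation \eqref{eq:quadr_pmj}. The key preliminary step is to upgrade $\md$-convergence $\gamma^{-1/2}(D^{X^n}-D^X)\to 0$ in $\cL^2_t$ to $S^2$-convergence of the scaled hidden deviations: $E[\sup_{s\in[t,T]}(\ssfi^{X^n}_s-\ssfi^X_s)^2]\to 0$. By \cref{lem:scaledhiddendevdyn}, the difference $\ssfi^{X^n}-\ssfi^X$ solves a linear SDE with bounded coefficients on the $\ssfi$-components and with forcing proportional to $\gamma^{-1/2}(D^{X^n}-D^X)$, so a standard Gr\"onwall/BDG argument gives the $S^2$-convergence (the initial condition $\ssfi^{X^n}_t-\ssfi^X_t$ vanishes since both start from $d/\sqrt{\gamma_t}-\sqrt{\gamma_t}x$). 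Once this is in hand, each of the four terms under $E_t$ in \eqref{eq:quadr_pmj} is either quadratic or bilinear in $\ssfi^X$ and $\gamma^{-1/2}D^X$ with bounded coefficients, combined with fixed square-integrable data $\sqrt{\gamma_T}\xi$ and $\sqrt{\gamma_\cdot}\zeta_\cdot$ (by \eqref{eq:conditionxi}); Cauchy-Schwarz gives $L^1(\Omega,\cF,P)$-convergence of the integrand, and using $\|E_t[\cdot]\|_{L^1}\le\|\cdot\|_{L^1}$ yields the claim.

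\textbf{Plan for part (ii).} Given $X\in\cA^{pm}_t(x,d)$, I would approximate $X$ on $[t,T)$ by left-continuous step processes $X^n$ on dyadic partitions (after a standard truncation argument to secure integrability), and set $X^n_{t-}=x$, $X^n_T=\xi$; these lie in $\cA^{fv}_t(x,d)$. From \eqref{eq:defdeviationpm},
\begin{equation*}
D^{X^n}_s-D^X_s=\gamma_s(X^n_s-X_s)-\nu_s^{-1}\int_t^s (X^n_r-X_r)\,d(\nu_r\gamma_r),
\end{equation*}
and since $\nu\gamma$ is an It\^o process with bounded coefficients (obtained from $\mu,\sigma,\rho,\eta,\rcor$ via It\^o's formula), standard $L^2$ SDE estimates combined with the density of step functions in $L^2(\Omega\times[t,T],\gamma\, dP\otimes ds)$ yield $\md(X^n,X)\to 0$. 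For the infimum identity \eqref{eq:inf_equal}, the inclusion $\cA^{fv}_t(x,d)\subseteq\cA^{pm}_t(x,d)$ and \cref{cor:costfctonfvequal} give ``$\le$''; the reverse follows from (i) by approximating an arbitrary $X\in\cA^{pm}_t(x,d)$ by $X^n\in\cA^{fv}_t(x,d)$, passing to an a.s.\ convergent subsequence and using the definition of $\essinf$.

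\textbf{Plan for part (iii).} If $(X^n)$ is $\md$-Cauchy, then $\gamma^{-1/2}D^{X^n}$ is Cauchy in $\cL^2_t$ with some limit $g$, so $D^0:=\gamma^{1/2}g$ is the $L^2(dP\otimes ds\cdot\gamma^{-1})$-limit. The same SDE argument as in part (i), applied to $\ssfi^{X^n}-\ssfi^{X^m}$, shows $(\ssfi^{X^n})$ is $S^2$-Cauchy with a continuous limit $\ssfi^0$. Define
\begin{equation*}
X^0_s:=\gamma_s^{-1}D^0_s-\gamma_s^{-1/2}\ssfi^0_s\quad\text{for }s\in[t,T],\qquad X^0_{t-}:=x,\qquad X^0_T:=\xi.
\end{equation*}
This $X^0$ is progressively measurable (after choosing a jointly measurable representative) and $\int_t^T (X^0)^2\,ds<\infty$ a.s., by extracting a subsequence such that $X^n\to X^0$ a.s.\ in $dP\otimes ds$ and using Fatou. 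The nontrivial step is to verify that the deviation process defined from $X^0$ via \eqref{eq:defdeviationpm} actually equals $D^0$; I would prove this by passing to the limit in \eqref{eq:defdeviationpm} applied to $X^n$, using that the semimartingale $\nu\gamma$ has bounded drift/diffusion coefficients so that the $L^2$-continuity of $Y\mapsto \int_t^\cdot Y_r\,d(\nu_r\gamma_r)$ on the appropriate weighted $L^2$ space allows the stochastic-integral and pointwise terms to converge in probability to the corresponding terms built from $X^0$. Condition (A1) for $X^0$ is automatic since $\gamma^{-1/2}D^0\in\cL^2_t$. Finally, $\md(X^n,X^0)\to 0$ is just the $L^2$-convergence of $D^{X^n}$ to $D^0=D^{X^0}$.

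\textbf{Main obstacle.} I expect the stickiest point to be the identification $D^{X^0}=D^0$ in part (iii), i.e., commuting the limit with the stochastic integral in \eqref{eq:defdeviationpm}. The technical content is just a standard stability result for stochastic integrals against a semimartingale with bounded characteristics, but it has to be combined carefully with the fact that $X^0$ itself is only defined as an $L^2$-limit through the two-term formula above, so one has to check that no integrability gets lost when separating the $\gamma^{-1}D^0$ and $\gamma^{-1/2}\ssfi^0$ pieces.
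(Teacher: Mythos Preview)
Your plan for (i) matches the paper's proof: both use representation \eqref{eq:quadr_pmj}, upgrade $\md$-convergence to $S^2$-convergence of $\ssfi^{X^n}-\ssfi^X$ via the linear SDE \eqref{eq:dynscaledhiddendev} (this is \cref{lem:scaledhiddendevconv}), and then estimate term by term with Cauchy--Schwarz.

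For (ii) and (iii) your route is different, and your ``main obstacle'' is exactly where the paper's argument diverges. The paper never approximates $X$ directly nor passes to the limit in \eqref{eq:defdeviationpm}; instead it works throughout at the level of the control $u=\gamma^{-1/2}D^X\in\cL^2_t$. For (ii) it approximates $u$ in $\cL^2_t$ by $u^n=v^nZ$, where $v^n$ is bounded c\`adl\`ag of finite variation and $Z$ is the specific stochastic exponential \eqref{eq:defZ}; the choice of $Z$ is engineered so that when one defines $X^n=\gamma^{-1/2}(u^n-H^n)$ via \eqref{eq:SDEH0} and computes $dX^n$ by It\^o, all Brownian increments cancel and $X^n$ is automatically of finite variation, while (A2)--(A3) follow from $E[\sup_s|u^n_s|^p]<\infty$. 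For (iii) the paper again takes $u^0=\lim\gamma^{-1/2}D^{X^n}$, defines $H^0$ \emph{directly} as the solution of \eqref{eq:SDEH0}, sets $X^0=\gamma^{-1/2}(u^0-H^0)$, and obtains the identification $D^{X^0}=\gamma^{1/2}u^0$ by a clean SDE-uniqueness argument (\cref{lem:getXfromu}): the process $\alpha\beta-H^0$ satisfies a homogeneous linear SDE with zero initial condition, hence vanishes. Your step-function and limiting approaches can be made to work---the key observation you would need is that $\ssfi^X$ also satisfies a linear SDE with bounded coefficients forced by $\gamma^{1/2}X$ (this is \cref{lem:dynalphabeta} \emph{before} the substitution $-\gamma^{1/2}X=\ssfi-\gamma^{-1/2}D$), which yields $\md(X^n,X)\le C\|\gamma^{1/2}(X^n-X)\|_{\cL^2_t}$ and lets you avoid the unbounded weights---but the paper's detour through $u$ and \cref{lem:getXfromu} bypasses both the weighted-density issue in (ii) and your stochastic-integral-limit obstacle in (iii) entirely.
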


In \Cref{cor:solnforJpm} below we provide sufficient conditions that ensure that the infimum on the right-hand side of \eqref{eq:inf_equal} is indeed a minimum.

\section{Reduction to a standard LQ stochastic control problem}\label{sec:reductiontoLQ}

In this section 
we recast the problem of minimizing  $\pmJ$ 
over $X \in \cA_t^{pm}(x,d)$ as a standard LQ stochastic control problem.
All proofs of this section are given in \Cref{sec:proofs}.

\subsection{The first reduction}
Note that \eqref{eq:quadr_pmj} in \Cref{lem:scaledhiddendevdyn} shows that for $t\in [0,T]$, $x,d\in \R$, and $X\in\cA^{pm}_t(x,d)$ the costs $\pmJ_t(x,d,X)$ depend in a quadratic way on $(\ssfi^X,\gamma^{-\frac12}D^X)$. Moreover, \eqref{eq:dynscaledhiddendev} in \Cref{lem:scaledhiddendevdyn} ensures that the dynamics of $\ssfi^X$ depend linearly on $(\ssfi^X,\gamma^{-\frac12}D^X)$. 
These two observations suggest to view the minimization problem of $\pmJ$ over  $X\in\cA^{pm}_t(x,d)$ as a standard LQ stochastic control problem with state process $\ssfi^X$ and control $\gamma^{-\frac12}D^X$. This motivates the following definitions. 
For every $t \in [0,T]$, $x,d \in \R$, and $u \in \cL_t^2$, we consider the state process $\ssfiu^u=(\ssfiu^u_s)_{s\in[t,T]}$ defined by 
\begin{equation}\label{eq:controlledprocdyn}
	\begin{split}
		d\ssfiu^u_s & = \left(\frac12 \left( \mu_s - \frac14 \sigma_s^2 \right) \ssfiu^u_s -\frac12 \left( 2(\rho_s+\mu_s) - \sigma_s^2 - \sigma_s\eta_s\rcor_s \right) u_s \right)ds\\
		&\quad +\left(\frac12 \sigma_s \ssfiu^u_s - (\sigma_s + \eta_s \rcor_s) u_s\right) dW^1_s
		- \eta_s \sqrt{1-\rcor_s^2} u_s dW_s^2, \quad s\in [t,T],\\
		\ssfiu^u_t&=\frac{d}{\sqrt{\gamma_t}}-\sqrt{\gamma_t}x,
	\end{split}
\end{equation}
and 
the cost functional $\LQJ$ defined by 
\begin{equation}\label{eq:defcostfct2}
	\begin{split}
		\LQJ_{t}\left(\frac{d}{\sqrt{\gamma_t}}-\sqrt{\gamma_t}x,u \right) 
		& = \frac12 E_t\bigg[ \big(\ssfiu^u_T + \sqrt{\gamma_T} \xi \big)^2 
		+ \int_t^T 2(\kappa_s+\lambda_s) u_s^2 
		ds \\
		& \quad \quad \quad 
		+ \int_t^T 
		 \left( 2\lambda_s \left(\ssfiu^u_s + \sqrt{\gamma_s} \zeta_s \right)^2 - 4 \lambda_s \left(\ssfiu^u_s + \sqrt{\gamma_s} \zeta_s \right) u_s \right)
		ds 
		\bigg].
	\end{split}
\end{equation}
Once again we sometimes simply write $\ssfiu$ instead of $\ssfiu^u$. The LQ stochastic control problem is to minimize \eqref{eq:defcostfct2} over the set of admissible controls $\cL_t^2$.

It holds that for every progressively measurable execution strategy $X \in \cA_t^{pm}(x,d)$ there exists a control $u \in \cL_t^2$ such that the cost functional $\pmJ$ can be rewritten in terms of $\LQJ$ (and $-\frac{d^2}{2\gamma_t}$). 
In fact, this is achieved by taking $u=\gamma^{-\frac12}D^X$, as outlined in the motivation above. 
We state this as \Cref{lem:givenXpmfindustcostfcteq}.

\begin{lemma}\label{lem:givenXpmfindustcostfcteq}
	Let $t\in[0,T]$ and $x,d\in\R$. 
	Suppose that $X\in\cA^{pm}_t(x,d)$ with associated deviation $D^X$. 
	Define $u=(u_s)_{s \in [t,T]}$ by $u_s=\gamma_s^{-\frac12} D^X_s$, $s \in [t,T]$. 
	It then holds that $u \in \cL_t^2$ and that 
	$\pmJ_t(x,d,X) = 
	\LQJ_t( \frac{d}{\sqrt{\gamma_t}} - \sqrt{\gamma_t} x, u ) - \frac{d^2}{2\gamma_t}$ a.s.
\end{lemma}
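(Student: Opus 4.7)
The proof has two pieces: the $\cL_t^2$-membership of $u$, and the identity between the two cost functionals. The plan is to do the first directly from assumption (A1), and then to derive the identity by identifying the state process $\ssfiu^u$ of the LQ problem (with the specific control $u_s=\gamma_s^{-\frac12}D^X_s$) with the scaled hidden deviation $\ssfi^X$ of Lemma \ref{lem:scaledhiddendevdyn}, after which the two cost formulas become manifestly equal term-by-term.

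First, I would observe that $\lVert u\rVert_{\cL_t^2}^2 = E[\int_t^T \gamma_s^{-1}(D^X_s)^2\,ds]$, which is finite by condition (A1) in the definition of $\cA^{pm}_t(x,d)$. Together with progressive measurability of $\gamma$ and $D^X$, this gives $u\in\cL_t^2$.

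Next, with this $u$, I would compare the dynamics \eqref{eq:controlledprocdyn} governing $\ssfiu^u$ to the dynamics \eqref{eq:dynscaledhiddendev} governing $\ssfi^X$ (from Lemma \ref{lem:scaledhiddendevdyn}). Substituting $u_s=\gamma_s^{-\frac12}D^X_s$ into \eqref{eq:controlledprocdyn} produces exactly the SDE \eqref{eq:dynscaledhiddendev} that $\ssfi^X$ satisfies, and the initial conditions at $s=t$ coincide since $\ssfi^X_t=\frac{d}{\sqrt{\gamma_t}}-\sqrt{\gamma_t}\,x$. Both sides are thus solutions of the same linear SDE with bounded drift/volatility coefficients $\frac12(\mu_s-\tfrac14\sigma_s^2)$ and $\frac12\sigma_s$ multiplying the state, driven by a fixed progressively measurable inhomogeneity stemming from $u$; by standard pathwise uniqueness for linear SDEs with bounded coefficients we conclude $\ssfiu^u_s=\ssfi^X_s$ for all $s\in[t,T]$ a.s.

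Finally, I would substitute $\ssfiu^u=\ssfi^X$ and $u_s=\gamma_s^{-\frac12}D^X_s$ into the definition \eqref{eq:defcostfct2} of $\LQJ_t(\tfrac{d}{\sqrt{\gamma_t}}-\sqrt{\gamma_t}\,x,u)$. The terminal term $(\ssfiu^u_T+\sqrt{\gamma_T}\xi)^2$ becomes $(\ssfi^X_T+\sqrt{\gamma_T}\xi)^2$; the $u_s^2$-integral becomes $\int_t^T(\ldots)\gamma_s^{-1}(D^X_s)^2\,ds$; and the mixed terms with $\lambda_s$ reproduce exactly $\int_t^T 2\lambda_s(\ssfi^X_s+\sqrt{\gamma_s}\zeta_s)^2-4\lambda_s(\ssfi^X_s+\sqrt{\gamma_s}\zeta_s)\gamma_s^{-\frac12}D^X_s\,ds$. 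Comparing term-by-term with the representation \eqref{eq:quadr_pmj} of $\pmJ_t(x,d,X)$ yields $\LQJ_t(\tfrac{d}{\sqrt{\gamma_t}}-\sqrt{\gamma_t}\,x,u)=\pmJ_t(x,d,X)+\tfrac{d^2}{2\gamma_t}$.

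No step is a serious obstacle; the only point demanding care is the uniqueness argument identifying $\ssfiu^u$ with $\ssfi^X$, which relies on the boundedness of $\mu,\sigma,\rho,\eta,\rcor$ (guaranteed in Section \ref{sec:fvstrat}) together with the $\cL_t^2$-integrability of the inhomogeneity $u$ already established.
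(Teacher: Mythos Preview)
Your proposal is correct and follows essentially the same approach as the paper: use (A1) for $u\in\cL_t^2$, identify $\ssfiu^u$ with $\ssfi^X$ via the fact that both satisfy the same linear SDE \eqref{eq:controlledprocdyn}/\eqref{eq:dynscaledhiddendev} with the same initial value, and then read off the cost identity from \eqref{eq:quadr_pmj}. The only difference is that you spell out the pathwise-uniqueness argument for the linear SDE more explicitly than the paper does.
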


On the other hand, we may also start with $u \in \cL_t^2$ and derive a progressively measurable execution strategy $X \in \cA_t^{pm}(x,d)$ such that the expected costs match. 

\begin{lemma}\label{propo:givenugetX}	
	Let $t\in[0,T]$ and $x,d\in\R$. 
	Suppose that $u=(u_s)_{s \in [t,T]} \in \cL_t^2$ and let $\ssfiu^u$ be the associated solution of \eqref{eq:controlledprocdyn}.
	Define $X=(X_s)_{s\in[t-,T]}$ by 
	$X_s=\gamma_s^{-\frac12}(u_s-\ssfiu^u_s)$, $s\in[t,T)$, $X_{t-}=x$, $X_T=\xi$.
%	\begin{equation*}
%		X_s=\gamma_s^{-\frac12}(u_s-\ssfiu^u_s), \quad s \in[t,T), \quad X_{t-}=x, \quad X_T=\xi .
%	\end{equation*}
	It then holds that $X \in \cA_t^{pm}(x,d)$ and that 
	$\pmJ_t(x,d,X)=\LQJ_t( \frac{d}{\sqrt{\gamma_t}} - \sqrt{\gamma_t} x, u ) - \frac{d^2}{2\gamma_t}$ a.s.
%	\begin{equation*}
%		\pmJ_t(x,d,X)=\LQJ_t\left( \frac{d}{\sqrt{\gamma_t}} - \sqrt{\gamma_t} x, u \right) - \frac{d^2}{2\gamma_t} \text{ a.s.}
%	\end{equation*}
\end{lemma}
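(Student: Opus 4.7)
The whole proof turns on the identity $D^X_s=\gamma_s^{1/2}u_s$ for $s\in[t,T)$, where $D^X$ is defined via~\eqref{eq:defdeviationpm}. Once this identity is secured, condition~(A1) reduces to $u\in\cL_t^2$, so $X\in\cA^{pm}_t(x,d)$ is immediate, and the cost equality then follows from \Cref{lem:givenXpmfindustcostfcteq}.

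First I would verify the preliminary integrability needed even to define $D^X$ via~\eqref{eq:defdeviationpm}. Since $\mu,\sigma,\rho,\eta,\rcor$ are bounded and $u\in\cL_t^2$, standard Gronwall/Burkholder-Davis-Gundy estimates applied to the linear SDE~\eqref{eq:controlledprocdyn} yield $E[\sup_{s\in[t,T]}(\ssfiu^u_s)^2]<\infty$. Together with the fact that $\gamma^{-1}$ is $P$-a.s.\ bounded on $[t,T]$ (being the reciprocal of a continuous strictly positive process), this gives $\int_t^T X_s^2\,ds\le 2(\sup_s\gamma_s^{-1})\int_t^T(u_s^2+(\ssfiu^u_s)^2)\,ds<\infty$ a.s., so the stochastic integral appearing in~\eqref{eq:defdeviationpm} is well-defined.

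For the main identity, consider $\ssfi^X_s:=\gamma_s^{-1/2}D^X_s-\gamma_s^{1/2}X_s$, the scaled hidden deviation associated to $X$. The It\^o computation that underlies \Cref{lem:scaledhiddendevdyn} — carried out on the formula~\eqref{eq:defdeviationpm} for $D^X$ — shows that $\ssfi^X$ satisfies~\eqref{eq:dynscaledhiddendev}; only $\int_t^T X_s^2\,ds<\infty$ a.s.\ is needed for this step, not the full~(A1). Comparing with~\eqref{eq:controlledprocdyn}, we see that $\ssfi^X$ and $\ssfiu^u$ solve the same linear SDE but with $\gamma^{-1/2}D^X$ respectively $u$ playing the role of input, starting from the matching initial value $d/\sqrt{\gamma_t}-\sqrt{\gamma_t}x$. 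The defining relation $X_s=\gamma_s^{-1/2}(u_s-\ssfiu^u_s)$ supplies the crucial algebraic identity
\begin{equation*}
\ssfi^X_s-\ssfiu^u_s=\gamma_s^{-1/2}D^X_s-u_s,
\end{equation*}
which, when substituted into the difference of the two SDEs, collapses it to a linear \emph{homogeneous} SDE with bounded coefficients for $Y:=\ssfi^X-\ssfiu^u$, with $Y_t=0$. Uniqueness then forces $Y\equiv 0$ on $[t,T)$, whence $\gamma_s^{-1/2}D^X_s=u_s$ on $[t,T)$.

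Given the identity $D^X=\gamma^{1/2}u$ on $[t,T)$, condition~(A1) reads $\lVert u\rVert_{\cL_t^2}^2<\infty$ and is satisfied; combined with the built-in boundary conditions $X_{t-}=x$ and $X_T=\xi$, this yields $X\in\cA^{pm}_t(x,d)$. Applying \Cref{lem:givenXpmfindustcostfcteq} to this $X$ produces the control $\widetilde u_s:=\gamma_s^{-1/2}D^X_s$ with $\pmJ_t(x,d,X)=\LQJ_t(d/\sqrt{\gamma_t}-\sqrt{\gamma_t}x,\widetilde u)-d^2/(2\gamma_t)$; since $\widetilde u$ and $u$ coincide on $[t,T)$, hence $dP\times ds$-a.e.\ on $[t,T]$, the two values $\LQJ_t(\cdot,\widetilde u)$ and $\LQJ_t(\cdot,u)$ agree and the claim follows. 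I expect the main obstacle to be the It\^o bookkeeping required to verify~\eqref{eq:dynscaledhiddendev} for $\ssfi^X$ from the explicit formula~\eqref{eq:defdeviationpm} \emph{without} invoking~(A1); once this is done, the collapse of the difference SDE to a homogeneous one, and the SDE-uniqueness conclusion, are automatic.
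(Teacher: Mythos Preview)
Your proposal is correct and follows essentially the same route as the paper. The paper packages the key step into an auxiliary lemma (\Cref{lem:getXfromu}): it computes the dynamics of $\alpha\beta=\gamma^{-1/2}\nu^{-1}(d-\gamma_t x-\int X\,d(\nu\gamma))$ via \Cref{lem:dynalphabeta}, substitutes the defining relation $-\gamma^{1/2}X=\ssfiu^u-u$, subtracts the SDE for $\ssfiu^u$, and obtains a homogeneous linear SDE for $\alpha\beta-\ssfiu^u$ starting from zero---exactly your difference-SDE argument, since $\alpha\beta=\ssfi^X$. For the cost identity the paper compares \eqref{eq:quadr_pmj} with \eqref{eq:defcostfct2} directly rather than invoking \Cref{lem:givenXpmfindustcostfcteq}, but that lemma's proof is precisely this comparison, so the difference is purely organisational.
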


\Cref{lem:givenXpmfindustcostfcteq} and \Cref{propo:givenugetX} together with \Cref{thm:contextcostfct} establish the following equivalence of the control problems pertaining to $\fvJ$, $\pmJ$, and $\LQJ$. 

\begin{corollary}\label{cor:equivofinfcostfct}
	For $t\in[0,T]$ and $x,d\in\R$  
	it holds that 
	\begin{equation*}
		\essinf_{X \in \cA^{fv}_t(x,d)} \fvJ_t(x,d,X) = 
		\essinf_{X \in \cA^{pm}_t(x,d)} \pmJ_t(x,d,X) = \essinf_{u \in \cL_t^2} \LQJ_t\left( \frac{d}{\sqrt{\gamma_t}} - \sqrt{\gamma_t} x, u \right) - \frac{d^2}{2\gamma_t} 
		\text{ a.s.}
	\end{equation*}
\end{corollary}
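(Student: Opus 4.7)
The first equality in the statement is literally \eqref{eq:inf_equal} recorded in \Cref{thm:contextcostfct}(ii), so it requires no further argument. The plan for the second equality is to exploit the correspondence between $\cA^{pm}_t(x,d)$ and $\cL_t^2$ that is set up by \Cref{lem:givenXpmfindustcostfcteq} and \Cref{propo:givenugetX}, which preserves the cost up to the $\cF_t$-measurable shift $-d^2/(2\gamma_t)$.

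Concretely, I would prove the second equality as two opposite a.s.-inequalities, each obtained from the defining universal property of the essential infimum (namely that $\essinf_\alpha Y_\alpha$ is the a.s.-largest random variable dominated by every $Y_\alpha$). For the ``$\le$'' direction, fix an arbitrary $u \in \cL_t^2$. By \Cref{propo:givenugetX}, the strategy $X$ constructed there lies in $\cA^{pm}_t(x,d)$ and satisfies $\pmJ_t(x,d,X) = \LQJ_t(\tfrac{d}{\sqrt{\gamma_t}} - \sqrt{\gamma_t}\,x, u) - \tfrac{d^2}{2\gamma_t}$ a.s., so $\essinf_{X' \in \cA^{pm}_t(x,d)} \pmJ_t(x,d,X') \le \LQJ_t(\tfrac{d}{\sqrt{\gamma_t}} - \sqrt{\gamma_t}\,x, u) - \tfrac{d^2}{2\gamma_t}$ a.s. Since $u$ was arbitrary, the essinf-property yields the ``$\le$'' inequality; the shift $-\tfrac{d^2}{2\gamma_t}$ is independent of $u$ and hence commutes with the essinf. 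The reverse inequality is obtained symmetrically: for any $X \in \cA^{pm}_t(x,d)$, the process $u = \gamma^{-\frac12} D^X$ lies in $\cL_t^2$ by \Cref{lem:givenXpmfindustcostfcteq} and satisfies the same identity in reverse, so $\essinf_{u' \in \cL_t^2} \LQJ_t(\cdot, u') - \tfrac{d^2}{2\gamma_t} \le \pmJ_t(x,d,X)$ a.s., and taking essinf over $X$ closes the loop.

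I do not anticipate any substantive obstacle; the corollary is essentially a bookkeeping exercise layered on top of the two preceding lemmas and \Cref{thm:contextcostfct}(ii). The only point that merits a moment of care is that the families over which the essinf is taken are uncountable, so one must invoke the universal property of the essinf rather than attempt a direct pathwise comparison that might accumulate null sets.
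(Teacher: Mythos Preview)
Your proposal is correct and follows exactly the approach indicated in the paper, which simply states that the corollary is established by \Cref{lem:givenXpmfindustcostfcteq}, \Cref{propo:givenugetX}, and \Cref{thm:contextcostfct} without spelling out the details. Your explicit two-inequality argument via the universal property of the essential infimum is precisely the routine verification the paper leaves to the reader.
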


Furthermore, \Cref{lem:givenXpmfindustcostfcteq}, \Cref{propo:givenugetX}, and \Cref{cor:equivofinfcostfct} provide a method to obtain an optimal progressively measurable execution strategy and potentially an optimal finite variation execution strategy from the standard optimal control problem and vice versa.

\begin{corollary}\label{cor:uoptimaliffXoptimal}
	Let $t\in[0,T]$ and $x,d\in\R$.
	
	(i) 
	Suppose that $X^*=(X^*_s)_{s \in [t-,T]} \in \cA_t^{pm}(x,d)$  minimizes $\pmJ$ over $\cA_t^{pm}(x,d)$ and let $D^{X^*}$ be the associated deviation process.
	Then, $u^*=(u^*_s)_{s \in [t,T]}$ defined by $u^*_s=\gamma_s^{-\frac12} D^{X^*}_s$, $s \in [t,T]$, minimizes $\LQJ$ over $\cL_t^2$.
	
	(ii) 
	Suppose that $u^*=(u^*_s)_{s \in [t,T]} \in \cL_t^2$ minimizes $\LQJ$ over $\cL_t^2$ and let $\ssfiu^{u^*}$ be the associated solution of \eqref{eq:controlledprocdyn} for $u^*$.
	Then, $X^*=(X^*_s)_{s\in[t,T]}$ defined by  $X^*_s=\gamma_s^{-\frac12}(u^*_s-\ssfiu^{u^*}_s)$, $s \in [t,T)$, $X_{t-}^*=x$, $X_T^*=\xi$, minimizes $\pmJ$ over $\cA_t^{pm}(x,d)$.
	
	Moreover, if $X^* \in \cA_t^{fv}(x,d)$ (in the sense that there is an element of $\cA_t^{fv}(x,d)$ within the equivalence class of $X^*$, see~\eqref{eq:250322a1}), 
	then $X^*$ minimizes $\fvJ$ over $\cA_t^{fv}(x,d)$.
\end{corollary}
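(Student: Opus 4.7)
\textbf{Proof proposal for \Cref{cor:uoptimaliffXoptimal}.}
My plan is to derive both parts as direct consequences of \Cref{lem:givenXpmfindustcostfcteq}, \Cref{propo:givenugetX}, and \Cref{cor:equivofinfcostfct}, with \Cref{cor:costfctonfvequal} handling the additional ``moreover'' statement in (ii). The whole argument is essentially a bookkeeping exercise: the two lemmas give a cost-preserving correspondence $X\leftrightarrow u$ (up to the additive constant $-d^2/(2\gamma_t)$), and the corollary says the corresponding essential infima agree, so optimizers on one side transport to optimizers on the other.

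For (i), I first apply \Cref{lem:givenXpmfindustcostfcteq} to the given optimizer $X^*\in\cA_t^{pm}(x,d)$. This yields $u^*:=\gamma^{-1/2}D^{X^*}\in\cL_t^2$ together with the identity $\pmJ_t(x,d,X^*)=\LQJ_t\bigl(\tfrac{d}{\sqrt{\gamma_t}}-\sqrt{\gamma_t}x,u^*\bigr)-\tfrac{d^2}{2\gamma_t}$ a.s. By optimality of $X^*$ and \Cref{cor:equivofinfcostfct},
\[
\pmJ_t(x,d,X^*)=\essinf_{X\in\cA_t^{pm}(x,d)}\pmJ_t(x,d,X)=\essinf_{u\in\cL_t^2}\LQJ_t\bigl(\tfrac{d}{\sqrt{\gamma_t}}-\sqrt{\gamma_t}x,u\bigr)-\tfrac{d^2}{2\gamma_t}\ \text{a.s.}
\]
Subtracting $-d^2/(2\gamma_t)$ and combining with the first identity gives $\LQJ_t(\cdot,u^*)=\essinf_{u\in\cL_t^2}\LQJ_t(\cdot,u)$ a.s., which is the claim.

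For (ii) I argue symmetrically: \Cref{propo:givenugetX} ensures $X^*\in\cA_t^{pm}(x,d)$ and $\pmJ_t(x,d,X^*)=\LQJ_t(\cdot,u^*)-\tfrac{d^2}{2\gamma_t}$ a.s. Using optimality of $u^*$ and \Cref{cor:equivofinfcostfct} again,
\[
\LQJ_t(\cdot,u^*)-\tfrac{d^2}{2\gamma_t}=\essinf_{u\in\cL_t^2}\LQJ_t(\cdot,u)-\tfrac{d^2}{2\gamma_t}=\essinf_{X\in\cA_t^{pm}(x,d)}\pmJ_t(x,d,X)\ \text{a.s.},
\]
so $X^*$ minimizes $\pmJ$ over $\cA_t^{pm}(x,d)$. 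For the final ``moreover'' assertion, assume $X^*$ admits a representative in $\cA_t^{fv}(x,d)$. Then \Cref{cor:costfctonfvequal} gives $\fvJ_t(x,d,X^*)=\pmJ_t(x,d,X^*)$ a.s., and invoking \Cref{cor:equivofinfcostfct} once more yields
\[
\fvJ_t(x,d,X^*)=\essinf_{X\in\cA_t^{pm}(x,d)}\pmJ_t(x,d,X)=\essinf_{X\in\cA_t^{fv}(x,d)}\fvJ_t(x,d,X)\ \text{a.s.},
\]
which, since $X^*\in\cA_t^{fv}(x,d)$, proves optimality of $X^*$ for the original finite variation problem.

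There is no real obstacle here; the only subtlety I will be careful about is that all optimality statements live at the level of essential infima/a.s.\ equalities of $\mathcal F_t$-measurable random variables (not just expectations), so I must verify that the pointwise a.s.\ identities from \Cref{lem:givenXpmfindustcostfcteq}, \Cref{propo:givenugetX}, and \Cref{cor:costfctonfvequal} are strong enough to chain together as above. Since each identity is ``a.s.'' in precisely the required sense, this causes no difficulty.
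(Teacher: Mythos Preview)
Your proposal is correct and follows exactly the approach the paper intends: the paper does not give an explicit proof of this corollary but presents it as an immediate consequence of \Cref{lem:givenXpmfindustcostfcteq}, \Cref{propo:givenugetX}, and \Cref{cor:equivofinfcostfct}, which is precisely the chain of identities you spell out, with \Cref{cor:costfctonfvequal} handling the finite-variation ``moreover'' clause. Your care with the a.s.\ nature of the equalities is appropriate and there is nothing to add.
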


\subsection{Formulation without cross-terms}\label{sec:LQproblemwocrossterms}

Note that the last integral in the definition \eqref{eq:defcostfct2} of the cost functional $J$ involves a product between the state process $\ssfiu^u$ and the control process $u$. A larger part of
the literature on LQ optimal control considers cost functionals that do
not contain such cross-terms. In particular, this applies to \cite{kohlmann2002global}, whose results we apply in \Cref{sec:soln_and_ex} below. 
For this reason we provide in this subsection  a reformulation of the control problem \eqref{eq:controlledprocdyn} and \eqref{eq:defcostfct2}
that does
not contain cross-terms. 
In order to carry out the transformation necessary for this, we need to impose a further condition on our model inputs. 
We assume that there exists a constant $C\in [0,\infty)$ such that for all $s\in [0,T]$ we have $P$-a.s.\ that
\begin{equation}\label{eq:cond_no_ct}
\lvert \lambda_s \rvert \le C\lvert \lambda_s+\kappa_s\rvert.
\end{equation}
Note that this assumption ensures that the set $\{\lambda_s+\kappa_s=0\}$ is a subset of $\{\lambda_s=0\}$ (up to a $P$-null set).
For this reason we, in the sequel, use the following

\smallskip\noindent
\textbf{Convention:}
Under~\eqref{eq:cond_no_ct} we always understand $\frac{\lambda_s}{\lambda_s+\kappa_s}=0$ on the set $\{\lambda_s+\kappa_s=0\}$.

\smallskip
Now in order to get rid of the cross-term in \eqref{eq:defcostfct2} we transform for $t\in [0,T]$ any control process $u  \in \cL_t^2$ in an affine way to $\hat u_s=u_s-\frac{\lambda_s}{\lambda_s+\kappa_s}(\ssfiu^u_s+\sqrt{\gamma_s}\zeta_s)$, $s\in [t,T]$. This leads to the new controlled state process $\ssfihat^{\hat u}=(\ssfihat^{\hat u}_s)_{s\in[t,T]}$ which is defined for every $t\in[0,T]$, $x,d \in \R$, and $\hat u \in \cL_t^2$ by 
\begin{equation}\label{eq:controlledprocdynhat}
	\begin{split}
		d\ssfihat_s^{\hat u}
		& = \left( \frac{\mu_s}{2} - \frac18 \sigma_s^2 - \frac{\lambda_s}{\lambda_s+\kappa_s} \left( \rho_s+\mu_s - \frac{\sigma_s^2 + \sigma_s \eta_s \rcor_s}{2} \right) \right)  \ssfihat_s^{\hat u} ds \\
		& \quad - \left( \rho_s+\mu_s - \frac{\sigma_s^2 + \sigma_s \eta_s \rcor_s}{2} \right) \hat{u}_s ds
		- \frac{\lambda_s}{\lambda_s+\kappa_s} \left( \rho_s+\mu_s - \frac{\sigma_s^2 + \sigma_s \eta_s \rcor_s}{2} \right) \sqrt{\gamma_s} \zeta_s ds \\
		& \quad + \left( \frac{\sigma_s}{2} - \frac{\lambda_s}{\lambda_s+\kappa_s} (\sigma_s + \eta_s \rcor_s) \right) \ssfihat_s^{\hat u} dW_s^1 
		- (\sigma_s + \eta_s \rcor_s) \hat{u}_s dW_s^1 \\
		& \quad - \frac{\lambda_s}{\lambda_s+\kappa_s} (\sigma_s + \eta_s \rcor_s) \sqrt{\gamma_s} \zeta_s dW_s^1 
		- \frac{\lambda_s}{\lambda_s+\kappa_s} \eta_s \sqrt{1-\rcor_s^2} \ssfihat_s^{\hat u} dW_s^2 \\ 
		& \quad - \eta_s \sqrt{1-\rcor_s^2} \hat u_s dW_s^2 
		- \frac{\lambda_s}{\lambda_s+\kappa_s} \eta_s \sqrt{1-\rcor_s^2}  \sqrt{\gamma_s} \zeta_s dW_s^2 
		, \quad s\in [t,T], \\
		\ssfihat_t^{\hat u} & = \frac{d}{\sqrt{\gamma_t}}-\sqrt{\gamma_t}x. 
	\end{split}
\end{equation}
The meaning of~\eqref{eq:controlledprocdynhat} is that we only reparametrize the control ($u\to\hat u$) but not the state variable ($\ssfihat^{\hat u}=\ssfiu^u$), see \Cref{lem:trafouvsuhat} for the formal statement. 
For $t\in [0,T]$, $x,d\in\R$, $\hat u \in \cL_t^2$ and associated $\ssfihat^{\hat u}$, we define the cost functional $\KTJ$ by 
\begin{equation}\label{eq:defcostfctLQ2}
	\begin{split}
		\KTJ_{t}\!\left(\!\frac{d}{\sqrt{\gamma_t}}-\!\sqrt{\gamma_t}x, \hat u \! \right) 
		& \! = \! E_t\bigg[ \frac12 \big(\ssfihat^{\hat u}_T + \! \sqrt{\gamma_T} \xi \big)^2  \!
		+ \!\! \int_t^T \! \!\!
		 \left(\! \frac{\lambda_s \kappa_s}{\lambda_s+\kappa_s} \left(\! \ssfihat^{\hat u}_s + \! \sqrt{\gamma_s} \zeta_s \right)^2 \!\! + \! (\lambda_s\!+\!\kappa_s) \hat u_s^2 \right)
		ds \bigg].
	\end{split}
\end{equation}
This cost functional does not exhibit cross-terms, but is equivalent to $\LQJ$ of~\eqref{eq:defcostfct2} in the sense of the following lemma.

\begin{lemma}\label{lem:trafouvsuhat}
	 Assume that \eqref{eq:cond_no_ct} holds true.
	 Let $t\in [0,T]$ and $x,d\in\R$. 
	
	(i) 
	Suppose that $u \in\cL_t^2$ with associated state process $\ssfiu^u$ defined by \eqref{eq:controlledprocdyn}. 
	Then, $\hat u =(\hat u_s)_{s \in [t,T]}$ defined by 
	$\hat u_s = u_s - \frac{\lambda_s}{\lambda_s+\kappa_s} (\ssfiu^u_s + \sqrt{\gamma_s} \zeta_s )$, $s\in[t,T]$, 
	is in $\cL_t^2$, and it holds that $\ssfihat^{\hat u}=\ssfiu^u$ and 
	$\LQJ_{t}(\frac{d}{\sqrt{\gamma_t}}-\sqrt{\gamma_t}x, u ) = \KTJ_{t}(\frac{d}{\sqrt{\gamma_t}}-\sqrt{\gamma_t}x, \hat u )$.
	
	(ii) 
	Suppose that $\hat u \in \cL_t^2$ with associated state process $\ssfihat^{\hat u}$ defined by \eqref{eq:controlledprocdynhat}. 
	Then, $u=(u_s)_{s \in [t,T]}$ defined by 
	$u_s=\hat u_s + \frac{\lambda_s}{\lambda_s+\kappa_s} (\ssfihat^{\hat u}_s + \sqrt{\gamma_s} \zeta_s )$, $s\in[t,T]$, 
	is in $\cL_t^2$, and it holds that $\ssfiu^u=\ssfihat^{\hat u}$ and 
	$\LQJ_{t}(\frac{d}{\sqrt{\gamma_t}}-\sqrt{\gamma_t}x, u ) = \KTJ_{t}(\frac{d}{\sqrt{\gamma_t}}-\sqrt{\gamma_t}x, \hat u )$.
\end{lemma}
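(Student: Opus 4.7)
The plan is to treat parts (i) and (ii) as inverses of one another: the map $u\mapsto\hat u$ and its formal inverse $\hat u\mapsto u$ are mutually inverse affine transformations that, as I aim to show, leave the controlled state process invariant. Hence once (i) is established the same chain of equalities read backward yields (ii), and I will therefore concentrate on proving (i).

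The first step is to verify $\hat u\in\cL_t^2$. The multiplier $\frac{\lambda_s}{\lambda_s+\kappa_s}$ is uniformly bounded by the constant $C$ from~\eqref{eq:cond_no_ct} (using the convention on the zero set of $\lambda_s+\kappa_s$), while $E[\int_t^T\gamma_s\zeta_s^2\,ds]<\infty$ by~\eqref{eq:conditionxi}. The remaining ingredient is an a priori estimate $E[\sup_{s\in[t,T]}(\ssfiu^u_s)^2]<\infty$, which I would obtain by applying the Burkholder--Davis--Gundy inequality together with Gronwall's lemma to the linear SDE~\eqref{eq:controlledprocdyn}: its homogeneous coefficients in $\ssfiu^u$ are bounded and its inhomogeneity is a bounded multiple of $u\in\cL_t^2$, exactly as in the corresponding bound for $\ssfi^X$ that underlies~\Cref{lem:scaledhiddendevdyn}. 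This a priori estimate is the main technical obstacle of the proof; everything that follows is algebraic bookkeeping.

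The second step is to check $\ssfihat^{\hat u}=\ssfiu^u$. Both processes start at $\frac{d}{\sqrt{\gamma_t}}-\sqrt{\gamma_t}x$, so by pathwise uniqueness for linear SDEs it suffices to plug $u_s=\hat u_s+\frac{\lambda_s}{\lambda_s+\kappa_s}(\ssfiu^u_s+\sqrt{\gamma_s}\zeta_s)$ into the right-hand side of~\eqref{eq:controlledprocdyn} and collect the terms proportional to $\ssfiu^u_s$, to $\hat u_s$, and to $\sqrt{\gamma_s}\zeta_s$ separately in the drift, in the $dW^1$-coefficient, and in the $dW^2$-coefficient; each of the three resulting groups matches the corresponding expression on the right-hand side of~\eqref{eq:controlledprocdynhat} line by line.

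Finally, to match the cost functionals, I write $A_s:=\ssfiu^u_s+\sqrt{\gamma_s}\zeta_s$ and use $2\rho_s+\mu_s-\sigma_s^2-\eta_s^2-2\sigma_s\eta_s\rcor_s=2\kappa_s$ from~\eqref{eq:defkappa} to rewrite the half of the integrand in~\eqref{eq:defcostfct2} as $(\lambda_s+\kappa_s)u_s^2+\lambda_s A_s^2-2\lambda_s A_s u_s$. Substituting $u_s=\hat u_s+\frac{\lambda_s}{\lambda_s+\kappa_s}A_s$ and repeatedly applying the identity $(\lambda_s+\kappa_s)\frac{\lambda_s}{\lambda_s+\kappa_s}=\lambda_s$, the $\hat u_s A_s$ cross-terms cancel and the $A_s^2$ coefficient collapses to $\lambda_s-\frac{\lambda_s^2}{\lambda_s+\kappa_s}=\frac{\lambda_s\kappa_s}{\lambda_s+\kappa_s}$, producing exactly the integrand of~\eqref{eq:defcostfctLQ2}. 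Since the terminal term is unchanged (because $\ssfihat^{\hat u}=\ssfiu^u$), this yields $\LQJ_t=\KTJ_t$, completing (i). For (ii) one observes that the formula $u_s=\hat u_s+\frac{\lambda_s}{\lambda_s+\kappa_s}(\ssfihat^{\hat u}_s+\sqrt{\gamma_s}\zeta_s)$ inverts the transformation used in (i), so the same three steps apply verbatim with the roles of $u$ and $\hat u$ swapped.
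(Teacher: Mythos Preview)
Your proposal is correct and follows essentially the same route as the paper: establish the $\cL_t^2$-bound for the transformed control via the sup-norm estimate on the state process (relying on boundedness of the SDE coefficients, the integrability of $\sqrt{\gamma}\zeta$, and~\eqref{eq:cond_no_ct}), substitute the affine relation between $u$ and $\hat u$ into~\eqref{eq:controlledprocdyn} to recover~\eqref{eq:controlledprocdynhat}, and complete the square in the running cost to turn $(\lambda_s+\kappa_s)u_s^2+\lambda_s A_s^2-2\lambda_s A_s u_s$ into $(\lambda_s+\kappa_s)\hat u_s^2+\frac{\lambda_s\kappa_s}{\lambda_s+\kappa_s}A_s^2$. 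The only point worth making explicit when you ``swap roles'' for (ii) is that the a priori estimate $E[\sup_{s\in[t,T]}(\ssfihat^{\hat u}_s)^2]<\infty$ now has to be derived from the more involved SDE~\eqref{eq:controlledprocdynhat}, whose coefficients are bounded precisely because of~\eqref{eq:cond_no_ct}; the paper spells this out, and your phrase ``same three steps verbatim'' covers it, but it is the one place where (ii) is not literally (i) read backward.
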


As a corollary, we obtain the following link between an optimal control for $\KTJ$ and an optimal control for $\LQJ$.
\begin{corollary}\label{cor:uoptimaliffuhatoptimal}
	Assume that \eqref{eq:cond_no_ct} holds true.
	Let $t\in[0,T]$ and $x,d\in\R$.
	
	(i) 
	Suppose that $u^*=(u^*_s)_{s \in [t,T]} \in \cL_t^2$ is  an optimal control for $\LQJ$, and let $\ssfiu^{u^*}$ be the solution of \eqref{eq:controlledprocdyn} for $u^*$. 
	Then, $\hat u^*=(\hat u^*_s)_{s \in [t,T]}$ defined by 
	$\hat u^*_s = u^*_s - \frac{\lambda_s}{\lambda_s+\kappa_s} (\ssfiu^{u^*}_s + \sqrt{\gamma_s} \zeta_s )$, $s\in[t,T]$, 
	is an optimal control in $\cL_t^2$ for $\KTJ$.
	
	(ii) 
	Suppose that $\hat u^*=(\hat u^*_s)_{s \in [t,T]} \in \cL_t^2$ is an optimal control for $\KTJ$, and let $\ssfihat^{\hat u^*}$ be the solution of \eqref{eq:controlledprocdynhat} for $\hat u^*$.
	Then, $u^*=(u^*_s)_{s \in [t,T]}$ defined by 
	$u^*_s=\hat u^*_s + \frac{\lambda_s}{\lambda_s+\kappa_s} (\ssfihat^{\hat u^*}_s + \sqrt{\gamma_s} \zeta_s )$, $s\in[t,T]$, 
	is an optimal control in $\cL_t^2$ for $\LQJ$.
\end{corollary}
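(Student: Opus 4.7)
The plan is to deduce both parts of \Cref{cor:uoptimaliffuhatoptimal} directly from \Cref{lem:trafouvsuhat}. The crucial point is that the two affine transformations provided by parts (i) and (ii) of \Cref{lem:trafouvsuhat} are mutually inverse bijections from $\cL_t^2$ onto itself, and that under either of them the corresponding state processes $\ssfiu^u$ and $\ssfihat^{\hat u}$ coincide, so that $\LQJ_t(\tfrac{d}{\sqrt{\gamma_t}}-\sqrt{\gamma_t}x, u) = \KTJ_t(\tfrac{d}{\sqrt{\gamma_t}}-\sqrt{\gamma_t}x, \hat u)$ almost surely whenever $u$ and $\hat u$ are linked by the transformation. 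In other words, the transformation is a cost-preserving bijection between the admissible sets for the two LQ problems, and optimality transfers automatically.

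For part (i), I would begin with an optimal $u^*\in\cL_t^2$ for $\LQJ$, set $\hat u^*_s = u^*_s - \kl (\ssfiu^{u^*}_s + \sqrt{\gamma_s}\zeta_s)$ for $s\in[t,T]$, and invoke \Cref{lem:trafouvsuhat}(i) to conclude $\hat u^*\in\cL_t^2$ with $\KTJ_t(\tfrac{d}{\sqrt{\gamma_t}}-\sqrt{\gamma_t}x, \hat u^*) = \LQJ_t(\tfrac{d}{\sqrt{\gamma_t}}-\sqrt{\gamma_t}x, u^*)$ a.s. To verify that $\hat u^*$ is optimal for $\KTJ$, I would take an arbitrary competitor $\hat v\in\cL_t^2$, apply \Cref{lem:trafouvsuhat}(ii) to produce a corresponding $v\in\cL_t^2$ with $\LQJ_t(\tfrac{d}{\sqrt{\gamma_t}}-\sqrt{\gamma_t}x, v) = \KTJ_t(\tfrac{d}{\sqrt{\gamma_t}}-\sqrt{\gamma_t}x, \hat v)$ a.s., and then chain these two equalities with the a.s.\ inequality $\LQJ_t(\tfrac{d}{\sqrt{\gamma_t}}-\sqrt{\gamma_t}x, v) \ge \LQJ_t(\tfrac{d}{\sqrt{\gamma_t}}-\sqrt{\gamma_t}x, u^*)$ coming from optimality of $u^*$, obtaining $\KTJ_t(\tfrac{d}{\sqrt{\gamma_t}}-\sqrt{\gamma_t}x, \hat v) \ge \KTJ_t(\tfrac{d}{\sqrt{\gamma_t}}-\sqrt{\gamma_t}x, \hat u^*)$ a.s.

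Part (ii) follows by an entirely symmetric argument, with the roles of \Cref{lem:trafouvsuhat}(i) and \Cref{lem:trafouvsuhat}(ii) swapped. The only minor verification is that composing the two transformations in either order returns the original control, which is immediate from the equalities $\ssfiu^u = \ssfihat^{\hat u}$ stated in both parts of \Cref{lem:trafouvsuhat}. I do not anticipate any genuine obstacle here: once \Cref{lem:trafouvsuhat} is in place, the corollary reduces to the tautological observation that two optimization problems related by a cost-preserving bijection of their admissible sets have the same optimizers (up to the bijection).
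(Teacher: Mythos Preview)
Your proposal is correct and matches the paper's intended approach: the paper states this result as an immediate corollary of \Cref{lem:trafouvsuhat} without writing out a separate proof, and your argument is precisely the standard way to spell it out. The one remark about composing the two transformations being mutual inverses is harmless but not actually needed for the argument as you have written it; the chain of inequalities goes through using only that each direction of the transformation lands in $\cL_t^2$ and preserves cost, which is exactly what \Cref{lem:trafouvsuhat} provides.
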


\section{Solving the LQ control problem and the trade execution problem}\label{sec:soln_and_ex}

We now solve the LQ control problem from \Cref{sec:reductiontoLQ} and consequently obtain a solution of the trade execution problem.

\begin{remark}\label{rem:sun}
The solution approach of \cite{kohlmann2002global}, which we are about to apply, is built on the tight connection between standard LQ stochastic control problems and Riccati-type BSDEs (BSRDEs). This connection is well known and dates back at least to Bismut (see, e.g., \cite{bismut1976linear} and \cite{bismut1978controle}). The central challenge in this approach is to establish the existence of a solution of the BSRDE. Kohlmann and Tang prove in \cite{kohlmann2002global} such results in a general framework which in particular covers our problem formulation in \Cref{sec:LQproblemwocrossterms} under appropriate assumptions. 

There is a variety of further results in the literature on LQ stochastic control problems that provide existence results for BSRDEs under different sets of assumptions.
A specific potential further possibility is, for example, to use the results of the recent article \cite{sun2021indefiniteLQ} by Sun et al.\ in our setting. The set-up of \cite{sun2021indefiniteLQ} allows for cross-terms in the cost functional and, more interestingly, the results in \cite{sun2021indefiniteLQ} hold under a uniform convexity assumption on the cost functional, which is a weaker requirement than the usually imposed nonnegativity and positivity assumptions on the coefficients of the cost functional. 
However, in general, the terminal costs and the running costs in \eqref{eq:defcostfct2} (and also in \eqref{eq:defcostfctLQ2}) contain terms 
	such as 
$(\ssfiu^u_T + \sqrt{\gamma_T} \xi )^2$ and $\lambda_s(\ssfiu^u_s + \sqrt{\gamma_s} \zeta_s )^2$, which are inhomogeneous. 
Therefore, the results of \cite{sun2021indefiniteLQ} are only directly applicable in the special case where $\xi=0$ and at least one of $\lambda$ and $\zeta$ vanishes. 
A possible route for future research could be to incorporate inhomogeneous control problems as presented in  \Cref{sec:reductiontoLQ} to the framework of \cite{sun2021indefiniteLQ}.
\end{remark}

\textbf{Setting in \Cref{sec:soln_and_ex}:}
In our general setting (see \Cref{sec:fvstrat}) we additionally assume 
that the filtration $(\cF_s)_{s\in[0,T]}$ for the filtered probability space $(\Omega,\cF_T,(\cF_s)_{s\in[0,T]}, P)$ 
is the augmented natural filtration of the Brownian motion $(W^1,\ldots,W^m)^\top$. 
Furthermore, we set the initial time to $t=0$. 
We also assume that $\lambda$ and $\kappa = \frac12 ( 2\rho+\mu-\sigma^2-\eta^2 - 2\sigma \eta \rcor )$ are nonnegative $dP\times ds|_{[0,T]}$-a.e.\footnote{We stress at this point that the results presented in \Cref{sec:problemformulation,sec:reductiontoLQ} are valid for more general filtrations and for processes $\lambda$ and $\kappa$ possibly taking negative values. This opens the way for applying \Cref{sec:problemformulation,sec:reductiontoLQ} in other settings in future research.}

\begin{remark}
Note that the assumption of nonnegativity of $\lambda$ and $\kappa$ is necessary to apply the results of \cite{kohlmann2002global}. Indeed, \cite{kohlmann2002global} requires that $\lambda+\kappa$ (the coefficient in front of $\hat u^2$ in~\eqref{eq:defcostfctLQ2}) and $\frac{\lambda \kappa}{\lambda +\kappa}$ (the coefficient in front of $(\ssfihat^{\hat u}_s + \sqrt{\gamma_s} \zeta_s )^2$ in~\eqref{eq:defcostfctLQ2}) are nonnegative and bounded, which implies that $\lambda$ and $\kappa$ have to be nonnegative.

Moreover, we note that nonnegativity of $\lambda$ and $\kappa$ ensures that \eqref{eq:cond_no_ct} is satisfied. 
Further, we observe that the mentioned coefficients $\lambda+\kappa$ and $\frac{\lambda \kappa}{\lambda +\kappa}$ are bounded, as required. 
Indeed, it clearly holds $\frac{\lambda \kappa}{\lambda +\kappa}\le\kappa$, and it remains to recall that $\mu, \sigma, \rho, \eta$, and $\lambda$ are bounded and $\rcor$ is $[-1,1]$-valued (see \Cref{sec:fvstrat}).
\end{remark}

Note that the LQ control problem of \Cref{sec:LQproblemwocrossterms}, which consists of minimizing $\KTJ$ in \eqref{eq:defcostfctLQ2} with state dynamics given by \eqref{eq:controlledprocdynhat}, is of the form considered in \cite[(79)-(81)]{kohlmann2002global}. The solution can be described by the two BSDEs \cite[(9) and (85)]{kohlmann2002global}. The first one, \cite[(9)]{kohlmann2002global}, is a Riccati-type BSDE, which in our setting reads
\begin{equation}\label{eq:BSDEKL}
	\begin{split}
		dK_s & = - \Bigg[  \left( \mu_s + \kl \bigg( \frac{\lambda_s}{\lambda_s+\kappa_s}(\sigma_s^2 + 2\sigma_s \eta_s \rcor_s + \eta_s^2) - 2(\rho_s + \mu_s)  \bigg) \right)  K_s \\
		& \qquad + \left( \sigma_s - \kl 2(\sigma_s + \eta_s \rcor_s) \right) L^1_s 
		- \kl 2 \eta_s \sqrt{1-\rcor_s^2} L^2_s 
		+ \frac{\lambda_s\kappa_s}{\lambda_s+\kappa_s} 
		\\[1mm]
		& \qquad - 
		\frac{
		\left( \left( \rho_s + \mu_s - \kl (\sigma_s^2 + 2\sigma_s \eta_s \rcor_s + \eta_s^2) \right) K_s + (\sigma_s + \eta_s \rcor_s)L^1_s + \eta_s \sqrt{1-\rcor_s^2} L^2_s \right)^2}{
		\lambda_s + \kappa_s + (\sigma_s^2 + 2\sigma_s \eta_s \rcor_s + \eta_s^2 ) K_s} 
		\Bigg] ds \\
		& \quad + \sum_{j=1}^m L_s^j dW_s^j , \quad s \in [0,T], \\
		K_T&=\frac12 .
	\end{split}
\end{equation}
We call a pair $(K,L)$ with $L=(L^1,L^2,\ldots,L^m)^\top$ a \emph{solution} to BSDE~\eqref{eq:BSDEKL} if 
\begin{enumerate}[(i)]
	\item
	$K$ is an adapted, continuous, nonnegative, and bounded process,
	
	\item\label{it:010422a1}
	$\lambda + \kappa + (\sigma^2 + 2\sigma \eta \rcor + \eta^2 ) K >0$ $dP\times ds|_{[0,T]}$-a.e.,
	
	\item
	$L^1,\ldots, L^m\in \cL_0^2$, and
	
	\item
	BSDE~\eqref{eq:BSDEKL} is satisfied $P$-a.s.
\end{enumerate}
A discussion of this definition is in order.
The requirement of nonnegativity and boundedness of $K$ can be explained at this point by the fact that, under mild conditions, such a solution exists (see \Cref{thm:solnbykohlmanntang} below).
Condition~\eqref{it:010422a1} ensures that there is no problem with division in the driver of~\eqref{eq:BSDEKL}, where the quantity $\lambda + \kappa + (\sigma^2 + 2\sigma \eta \rcor + \eta^2 ) K$ appears in the denominator.
Moreover, it is worth noting that, for a nonnegative $K$, in our setting we always have
$\lambda + \kappa + (\sigma^2 + 2\sigma \eta \rcor + \eta^2 ) K\ge0$,
as $\sigma^2 + 2\sigma \eta \rcor + \eta^2=(\sigma+\eta\rcor)^2+\eta^2(1-\rcor^2)$.
From this we also see that the quantity $\lambda + \kappa + (\sigma^2 + 2\sigma \eta \rcor + \eta^2 ) K$ can vanish only in ``very degenerate'' situations.
The conclusion is that condition~\eqref{it:010422a1} is quite natural.

To shorten notation, we introduce, for a solution $(K,L)$ of BSDE~\eqref{eq:BSDEKL}, the process $\theta=(\theta_s)_{s\in[0,T]}$ by,
for $s\in[0,T]$,
\begin{equation}\label{eq:theta}
	\theta_s = \frac{\left( \rho_s + \mu_s - \kl (\sigma_s^2 + 2\sigma_s \eta_s \rcor_s + \eta_s^2) \right) K_s + (\sigma_s + \eta_s \rcor_s)L^1_s + \eta_s \sqrt{1-\rcor_s^2} L^2_s}{\lambda_s + \kappa_s + (\sigma_s^2 + 2\sigma_s \eta_s \rcor_s + \eta_s^2 ) K_s}.
\end{equation}
Next, we consider the second BSDE \cite[(85)]{kohlmann2002global}, which is linear and reads in our setting 
\begin{equation}\label{eq:BSDEpsiphi}
	\begin{split}
		d\psi_s & = - \Bigg[ 
		\left( \frac{\mu_s}{2} - \frac{\sigma_s^2}{8} 
		-  \left(\rho_s+\mu_s-\frac{\sigma_s^2 + \sigma_s\eta_s\rcor_s}{2}\right) 
		\left( \kl + \theta_s \right) \right) \psi_s \\
		& \qquad + \left( \frac{\sigma_s}{2} - (\sigma_s + \eta_s \rcor_s) \left( \kl + \theta_s \right) \right) 
		 \left( \phi_s^1 + \kl (\sigma_s + \eta_s \rcor_s) \sqrt{\gamma_s} \zeta_s K_s \right) \\
		& \qquad - \eta_s \sqrt{1-\rcor_s^2} \left( \kl + \theta_s \right) 
		 \left( \phi_s^2 + \kl \eta_s \sqrt{1-\rcor_s^2} \sqrt{\gamma_s} \zeta_s K_s \right) \\
		& \qquad + \kl \sqrt{\gamma_s} \zeta_s 
		\left( \left( \rho_s + \mu_s - \frac{\sigma_s^2+\sigma_s\eta_s\rcor_s}{2}  \right) K_s 
		+ (\sigma_s + \eta_s\rcor_s) L_s^1 
		+ \eta_s \sqrt{1-\rcor_s^2} L_s^2 \right) \\
		& \qquad - \frac{\lambda_s\kappa_s}{\lambda_s+\kappa_s}  \sqrt{\gamma_s} \zeta_s 
		\Bigg] ds  
		+ \sum_{j=1}^m \phi_s^j dW_s^j, \quad s\in [0,T], \\
		\psi_T & = -\frac12 \sqrt{\gamma_T} \xi .
	\end{split}
\end{equation}
A pair $(\psi,\phi)$ with $\phi=(\phi^1,\phi^2,\ldots,\phi^m)^\top$ is called a \emph{solution} to BSDE~\eqref{eq:BSDEpsiphi} if 
\begin{enumerate}[(i)]
	\item
	$\psi$ is an adapted continuous process with $E\left[ \sup_{s\in[0,T]} \psi_s^2 \right]<\infty$,
	
	\item
	$\phi$ is progressively measurable with $\int_0^T \lVert \phi_s \rVert_2^2 ds < \infty$ $P$-a.s., and

	\item
	BSDE~\eqref{eq:BSDEpsiphi} is satisfied $P$-a.s.
\end{enumerate}

For a solution $(K,L)$ of BSDE~\eqref{eq:BSDEKL} and a corresponding solution $(\psi,\phi)$ of BSDE~\eqref{eq:BSDEpsiphi}, we define $\theta^0=(\theta^0_s)_{s\in[0,T]}$ by
\begin{equation}\label{eq:theta0}
	\begin{split}
	\theta_s^0 
	& = \Bigg( \bigg( \rho_s + \mu_s - \frac{\sigma_s^2+\sigma_s\eta_s\rcor_s}{2} \bigg) \psi_s
	+ \kl \sqrt{\gamma_s} \zeta_s (\sigma_s^2 + 2\sigma_s \eta_s \rcor_s + \eta_s^2) K_s  \\
	& \qquad 
	+ (\sigma_s + \eta_s \rcor_s) \phi^1_s 
	+ \eta_s \sqrt{1-\rcor_s^2} \phi^2_s \Bigg) 
	\cdot \left(\lambda_s + \kappa_s + (\sigma_s^2 + 2\sigma_s \eta_s \rcor_s + \eta_s^2 ) K_s\right)^{-1},
	\end{split}
\end{equation}
for $s\in[0,T]$.
We further introduce for $x,d\in\R$ and $s \in [0,T]$
the SDE
\begin{equation}\label{eq:optssfihat}
d\ssfihat_s^*=\ssfihat_s^*\,d\cY_s+d\cZ_s,\quad
\ssfihat_0^* = \frac{d}{\sqrt{\gamma_0}} - \sqrt{\gamma_0} x,
\end{equation}
where for $s \in [0,T]$
\begin{equation*}
\begin{split}
d\cY_s&=\left( \frac{\mu_s}{2} - \frac{\sigma_s^2}{8} - \bigg( \rho_s + \mu_s - \frac{\sigma_s^2+\sigma_s\eta_s\rcor_s}{2} \bigg) \bigg( \kl + \theta_s \bigg) \right) ds\\
& \quad + \left( \frac{\sigma_s}{2} - (\sigma_s+\eta_s\rcor_s) \bigg( \kl + \theta_s \bigg) \right) dW_s^1
- \eta_s \sqrt{1-\rcor_s^2} \bigg( \kl + \theta_s \bigg) dW_s^2 ,
\end{split}
\end{equation*}
\begin{equation*}
\begin{split}
d\cZ_s&= \bigg( \rho_s + \mu_s - \frac{\sigma_s^2+\sigma_s\eta_s\rcor_s}{2} \bigg) \bigg( \theta_s^0 - \sqrt{\gamma_s} \zeta_s \kl \bigg) ds \\
& \quad + (\sigma_s + \eta_s\rcor_s) \bigg( \theta_s^0 - \sqrt{\gamma_s} \zeta_s \kl \bigg) dW_s^1 
+ \eta_s \sqrt{1-\rcor_s^2} \bigg( \theta_s^0 - \sqrt{\gamma_s} \zeta_s \kl \bigg) dW_s^2
.
\end{split}
\end{equation*}
We will show that the solution $\ssfihat^*$ of~\eqref{eq:optssfihat} is the optimal state process in the stochastic control problem to minimize $\KTJ$ of~\eqref{eq:defcostfctLQ2}.
Notice that $\ssfihat^*$ can be easily expressed via $\cY$ and $\cZ$ in closed form.

In the next theorem, we summarize consequences from \cite{kohlmann2002global} in our setting to obtain a minimizer of $\KTJ$ in \eqref{eq:defcostfctLQ2} and a representation of the minimal costs.

\begin{theo}\label{thm:solnbykohlmanntang}
	Assume that there exists $\varepsilon \in (0,\infty)$ such that 
	$\lambda+\kappa\geq \varepsilon$ $dP\times ds|_{[0,T]}$-a.e.\ or $\sigma^2+2\sigma\eta\rcor+\eta^2\geq\varepsilon$ $dP\times ds|_{[0,T]}$-a.e.
	We then have:
	
	(i)
	There exists a unique solution $(K,L)$ of BSDE~\eqref{eq:BSDEKL}. 
	If $\sigma^2+2\sigma\eta\rcor+\eta^2\geq\varepsilon$ $dP\times ds|_{[0,T]}$-a.e., there exists $c\in (0,\infty)$ such that $P(K_s\geq c \text{ for all } s \in [0,T])=1$.
	
	(ii) 
	There exists a unique solution $(\psi,\phi)$ of BSDE~\eqref{eq:BSDEpsiphi}. 
	
	(iii)
	Let $x,d\in\R$, and let $\ssfihat^*$ be the solution of SDE~\eqref{eq:optssfihat}. 
	Then, $\hat u^* = (\hat u^*_s)_{s\in[0,T]}$ defined by 
	\begin{equation}\label{eq:defoptimalcontroluhat}
		\hat u^*_s = \theta_s \ssfihat_s^* - \theta_s^0, \quad s\in[0,T],
	\end{equation}
	is the unique optimal control in $\cL_0^2$ for $\KTJ$,
	and $\ssfihat^*$ is the corresponding state process (i.e., $\ssfihat^*=\ssfihat^{\hat u^*}$).
	
	(iv)
	Let $x,d \in \R$. The costs associated to the optimal control \eqref{eq:defoptimalcontroluhat} are given by 
	\begin{equation*}
	\begin{split}
		\inf_{\hat u \in \cL_0^2}\KTJ_0\left( \frac{d}{\sqrt{\gamma_0}} - \sqrt{\gamma_0} x,\hat u\right)
		&=\KTJ_0\left( \frac{d}{\sqrt{\gamma_0}} - \sqrt{\gamma_0} x,\hat u^*\right)\\
		&=K_0 \left( \frac{d}{\sqrt{\gamma_0}} - \sqrt{\gamma_0} x \right)^2 - 2 \psi_0 \left( \frac{d}{\sqrt{\gamma_0}} - \sqrt{\gamma_0} x \right) + C_0,
		\end{split}
	\end{equation*}
	where
	\begin{equation}\label{eq:optcostslongterm}
		\begin{split}
			C_0 & = \frac12 E_0\left[ \gamma_T \xi^2 \right] 
			+ E_0\left[ \int_0^T K_s \frac{\lambda_s^2}{\left(\lambda_s+\kappa_s\right)^2} \gamma_s \zeta_s^2 (\sigma_s^2 + 2\sigma_s\eta_s\rcor_s + \eta_s^2) ds \right] \\
			& \quad + E_0\left[ \int_0^T \frac{\lambda_s\kappa_s}{\lambda_s+\kappa_s}  \gamma_s \zeta_s^2 ds \right] 
			- E_0\left[ \int_0^T (\theta_s^0)^2 (\lambda_s + \kappa_s + (\sigma_s^2 + 2 \sigma_s \eta_s \rcor_s + \eta_s^2 ) K_s ) ds \right] \\
			& \quad + E_0\left[ \int_0^T 2 \kl \sqrt{\gamma_s} \zeta_s 
			\psi_s \bigg( \rho_s + \mu_s - \frac{\sigma_s^2 + \sigma_s \eta_s \rcor_s}{2} \bigg) ds \right] \\
			& \quad + E_0\left[ \int_0^T 2 \kl \sqrt{\gamma_s} \zeta_s \left( \phi_s^1 (\sigma_s + \eta_s \rcor_s) 
			+ \phi_s^2 \eta_s \sqrt{1-\rcor_s^2} \right) ds \right] .
		\end{split}
	\end{equation}
\end{theo}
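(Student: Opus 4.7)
\textbf{Proof proposal for \Cref{thm:solnbykohlmanntang}.}

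The plan is to cast the LQ problem of minimizing $\KTJ$ in~\eqref{eq:defcostfctLQ2} subject to~\eqref{eq:controlledprocdynhat} as an instance of the inhomogeneous LQ framework \cite[(79)--(81)]{kohlmann2002global} and then quote (or slightly adapt) their existence/uniqueness and verification results. Concretely, I would write \eqref{eq:controlledprocdynhat} in the form $d\ssfihat_s^{\hat u}=(A_s\ssfihat_s^{\hat u}+B_s\hat u_s+f_s)ds+\sum_{j=1}^m(C_s^j\ssfihat_s^{\hat u}+D_s^j\hat u_s+g_s^j)dW_s^j$ with $A,B,C^j,D^j$ read off from~\eqref{eq:controlledprocdynhat} (only the $W^1$ and $W^2$ components are nontrivial) and with affine drivers $f_s,g_s^j$ proportional to $\sqrt{\gamma_s}\zeta_s$. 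The running cost weight in front of $(\ssfihat)^2$ is $\frac{\lambda\kappa}{\lambda+\kappa}$, that in front of $\hat u^2$ is $\lambda+\kappa$, and there is no cross term, so the Riccati equation associated with the homogeneous part is precisely~\eqref{eq:BSDEKL}. The extra affine/inhomogeneous pieces (including the terminal cost perturbation $\sqrt{\gamma_T}\xi$ inside the square, the moving target $\sqrt{\gamma_s}\zeta_s$, and the inhomogeneous drivers $f_s,g_s^j$) feed into the linear BSDE~\eqref{eq:BSDEpsiphi} exactly as in \cite[(85)]{kohlmann2002global}. With the coefficient correspondence in hand, (i)--(iii) will follow directly from the quoted theorems once the standing assumptions are verified.

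For (i) I would verify the two standing assumptions of \cite{kohlmann2002global} needed for the BSRDE: (a) boundedness of $A,B,C^j,D^j$ and of the running-cost weights, which follows from boundedness of $\mu,\sigma,\rho,\eta,\lambda$ (\Cref{sec:fvstrat}) and from $\frac{\lambda}{\lambda+\kappa}\in[0,1]$ (a consequence of $\lambda,\kappa\ge0$ together with our convention $0/0=0$); and (b) the uniform positivity of $\lambda+\kappa+\sum_j(D^j)^2K=\lambda+\kappa+(\sigma^2+2\sigma\eta\rcor+\eta^2)K$ needed for the Riccati denominator. Under either of the alternatives in the hypothesis ($\lambda+\kappa\ge\varepsilon$ or $\sigma^2+2\sigma\eta\rcor+\eta^2\ge\varepsilon$) this positivity holds uniformly because $K\ge0$ (by the comparison theorem for BSRDEs applied with trivial upper/lower bounds and nonnegative running cost). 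Uniqueness is Kohlmann-Tang's standard result. The lower bound $K\ge c>0$ in the non-degenerate-diffusion case comes from \cite[Theorem~2.1]{kohlmann2002global} (or its argument via a linearization and Gronwall) once $\sigma^2+2\sigma\eta\rcor+\eta^2\ge\varepsilon$ is in force. For (ii), the linear BSDE~\eqref{eq:BSDEpsiphi} has bounded drift coefficients in $(\psi,\phi)$ (here $\theta$ is bounded because of the Riccati positivity condition just verified), and the terminal value $-\tfrac12\sqrt{\gamma_T}\xi$ lies in $L^2$ since $E[\gamma_T\xi^2]<\infty$ by~\eqref{eq:conditionxi}; the forcing term is in the appropriate space thanks to the $L^2$-bound $E[\int_0^T\gamma_s\zeta_s^2 ds]<\infty$ in~\eqref{eq:conditionxi} and boundedness of the other factors. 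Hence standard existence/uniqueness for linear BSDEs with square-integrable data applies.

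Item (iii) is the feedback representation of the optimizer. Kohlmann-Tang show that the optimal control in their framework takes the form $\hat u^*_s=-N_s^{-1}(B_s^\top K_s+\sum_j D_s^{j\top}(C_s^j K_s+L_s^j))\ssfihat_s^*-N_s^{-1}(B_s^\top\psi_s+\sum_j D_s^{j\top}(C_s^j\psi_s+\phi_s^j)+\text{forcing})$, with $N$ the control-weight (i.e.\ $\lambda+\kappa$ in our setting). Plugging in our identified coefficients and simplifying reproduces exactly $\hat u_s^*=\theta_s\ssfihat_s^*-\theta_s^0$ with $\theta,\theta^0$ as in~\eqref{eq:theta}--\eqref{eq:theta0}. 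Substituting this feedback into~\eqref{eq:controlledprocdynhat} and collecting terms that are linear in $\ssfihat^*$ against those that are not yields precisely the affine SDE~\eqref{eq:optssfihat}. Square-integrability of $\hat u^*$ follows from boundedness of $\theta,\theta^0$ times $L^2$-bounds on $\ssfihat^*$, which in turn come from the affine-linear SDE~\eqref{eq:optssfihat} with bounded coefficients and $L^2$ forcing (via $\sqrt{\gamma_s}\zeta_s$ and the $\phi,\psi$ terms).

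For (iv), the strategy is to compute $\KTJ_0$ along $(\hat u^*,\ssfihat^*)$ by applying It\^o's formula to $K_s(\ssfihat^*_s)^2-2\psi_s\ssfihat^*_s$ between $0$ and $T$ and taking expectations. The Riccati equation~\eqref{eq:BSDEKL} is designed so that the $(\ssfihat^*)^2$-terms collapse against $(\lambda+\kappa)(\hat u^*)^2+\frac{\lambda\kappa}{\lambda+\kappa}(\ssfihat^*)^2$; the linear BSDE~\eqref{eq:BSDEpsiphi} is chosen so that the $\ssfihat^*$-linear terms collapse into a perfect square that reproduces the cross/moving-target interaction; what remains after these two cancellations is precisely the deterministic (in the sense of independent of $x,d$) constant $C_0$ given by~\eqref{eq:optcostslongterm}. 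The main obstacle, as I anticipate it, is the bookkeeping for $C_0$: many terms proportional to $\sqrt{\gamma_s}\zeta_s$, $\gamma_s\zeta_s^2$, $\gamma_T\xi^2$ and mixed $L^j\phi^k$-products must be combined using the definitions of $\theta$ and $\theta^0$ so that the denominator $\lambda+\kappa+(\sigma^2+2\sigma\eta\rcor+\eta^2)K$ rationalizes cleanly, matching the displayed formula. Uniqueness of the optimizer $\hat u^*$ follows from strict convexity of $\KTJ$ (positivity of $\lambda+\kappa$ on the control weight in the non-degenerate alternative, and from the Riccati-based completion of squares argument in the other). Finally, the unique optimal $u^*$ and the unique optimizer $X^*\in\cA_0^{pm}(x,d)$ for $\pmJ$ are then recovered by \Cref{cor:uoptimaliffuhatoptimal} and \Cref{cor:uoptimaliffXoptimal}.
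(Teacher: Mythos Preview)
Your overall approach matches the paper's: identify the problem of \Cref{sec:LQproblemwocrossterms} as an instance of \cite[(79)--(81)]{kohlmann2002global}, check the standing hypotheses, and then quote their theorems. The paper's proof is terse: \cite[Theorem~2.1]{kohlmann2002global} for (i) when $\lambda+\kappa\ge\varepsilon$, \cite[Theorem~2.2]{kohlmann2002global} for (i) when $\sigma^2+2\sigma\eta\rcor+\eta^2\ge\varepsilon$ (note you have these swapped), \cite[Theorem~5.1]{kohlmann2002global} for (ii), and the two parts of \cite[Theorem~5.2]{kohlmann2002global} for (iii) and (iv). For (iv) you propose to redo the It\^o completion-of-squares computation yourself rather than just citing \cite[Theorem~5.2]{kohlmann2002global}; that is valid but unnecessary here.

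There is, however, a genuine gap in your argument for (ii). You claim that $\theta$ is bounded ``because of the Riccati positivity condition just verified'' and then invoke standard existence/uniqueness for linear BSDEs with bounded coefficients. But $\theta$ in~\eqref{eq:theta} contains $(\sigma+\eta\rcor)L^1+\eta\sqrt{1-\rcor^2}L^2$ in the numerator, and $L^1,L^2$ are only known to lie in $\cL_0^2$, not $L^\infty$. The denominator is indeed bounded away from zero under either hypothesis, but that does not make $\theta$ bounded. The paper in fact flags this explicitly in the introduction: the second BSDE ``is linear, however, with unbounded coefficients''. So standard linear BSDE theory does not apply directly; you must cite \cite[Theorem~5.1]{kohlmann2002global}, which is tailored to handle exactly this situation (the unbounded coefficients coming from the Riccati solution). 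The same issue recurs in your argument for (iii): you assert square-integrability of $\hat u^*$ via ``boundedness of $\theta,\theta^0$'', but neither is bounded ($\theta^0$ involves $\psi,\phi$ as well). Square-integrability of $\hat u^*$ is part of the conclusion of \cite[Theorem~5.2]{kohlmann2002global}, not something you can read off from boundedness of the feedback coefficients.
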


\begin{proof}
	Observe that the problem in \Cref{sec:LQproblemwocrossterms} fits the problem considered in \cite[Section~5]{kohlmann2002global}. In particular, note that the coefficients in SDE~\eqref{eq:controlledprocdynhat} for $\ssfihat^{\hat u}$ and in the cost functional $\KTJ$ (see \eqref{eq:defcostfctLQ2}) are bounded, and that the inhomogeneities are in $\cL_0^2$. Moreover, we have that $\frac12$, $\frac{\lambda \kappa}{\lambda+\kappa}$, and $\lambda + \kappa$ are nonnegative. 
	Furthermore, the filtration by assumption in this section is generated by the Brownian motion $(W^1,\ldots,W^m)^\top$. 
	
	(i)
	If $\lambda+\kappa\geq \varepsilon$, this is an immediate consequence of \cite[Theorem~2.1]{kohlmann2002global}. In the case $\sigma^2+2\sigma\eta\rcor+\eta^2\geq\varepsilon$, this is an application of \cite[Theorem~2.2]{kohlmann2002global}.
	
	(ii)
	This is due to \cite[Theorem~5.1]{kohlmann2002global}.
	
	(iii)
	The first part of \cite[Theorem~5.2]{kohlmann2002global} yields the existence of a unique optimal control $\hat u^*$, 
	which is given in feedback form by the formula $\hat u^*=\theta \ssfihat^{\hat u^*} - \theta^0$. 
	We obtain~\eqref{eq:optssfihat} by plugging this into~\eqref{eq:controlledprocdynhat}.
	
	(iv)
	The second part of \cite[Theorem~5.2]{kohlmann2002global} provides us with the optimal costs.
\end{proof}

By an application of \Cref{cor:uoptimaliffuhatoptimal} and \Cref{cor:uoptimaliffXoptimal}, we obtain a solution to the trade execution problem of \Cref{sec:problemformulation}.

\begin{corollary}\label{cor:solnforJpm}
	Assume 
	that there exists $\varepsilon \in (0,\infty)$ such that 
	$\lambda+\kappa\geq \varepsilon$ $dP\times ds|_{[0,T]}$-a.e.\ or $\sigma^2+2\sigma\eta\rcor+\eta^2\geq\varepsilon$ $dP\times ds|_{[0,T]}$-a.e.
	Let $(K,L)$ be the unique solution of BSDE~\eqref{eq:BSDEKL}, $(\psi,\phi)$ the unique solution of BSDE~\eqref{eq:BSDEpsiphi}, and recall definitions \eqref{eq:theta} of $\theta$ and \eqref{eq:theta0} of $\theta^0$.
	Let $x,d \in \R$. Then, $X^*=(X^*_s)_{s\in[0-,T]}$ defined by 
	\begin{equation*}
	X^*_{0-}\! =x, \,\,\, X^*_T=\xi, \,\,\,\,
	X_s^* = \gamma_s^{-\frac12} \left(\! \bigg( \theta_s + \kl -1 \bigg) \ssfihat_s^* + \gamma_s^{\frac12} \zeta_s \kl \! - \theta_s^0 \right) \! , \, s\in [0,T),
	\end{equation*} 
	with $\ssfihat^*$ from \eqref{eq:optssfihat}, 
	is the unique (up to $dP\times ds|_{[0,T]}$-null sets) optimal execution strategy in $\cA_0^{pm}(x,d)$ for $\pmJ$. 
	The associated costs are given by 
	\begin{equation*}
	\inf_{X\in \cA_0^{pm}(x,d)} \!\! \pmJ_0(x,d,X) \!
	= 	\!	\pmJ_0(x,d,X^*) \!
	= \!
	K_0 \! \left(\! \frac{d}{\sqrt{\gamma_0}} \!-\! \sqrt{\gamma_0} x \! \right)^{\!\!2}\!\! - \! 2 \psi_0 \! \left(\! \frac{d}{\sqrt{\gamma_0}}\! - \!\sqrt{\gamma_0} x\! \right) \!+\! C_0 \!- \!\frac{d^2}{2\gamma_0}
	\end{equation*}
	with $C_0$ from~\eqref{eq:optcostslongterm}.
\end{corollary}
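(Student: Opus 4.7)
The plan is to chain the conclusions of \Cref{thm:solnbykohlmanntang} with the back-translations furnished by \Cref{cor:uoptimaliffuhatoptimal} and \Cref{cor:uoptimaliffXoptimal}, thereby tracing the optimal control $\hat u^*$ of the cross-term-free LQ problem for $\KTJ$ all the way back to an optimal progressively measurable execution strategy $X^*$ for $\pmJ$. The existence hypotheses of the theorem carry over verbatim; hence BSDEs~\eqref{eq:BSDEKL} and~\eqref{eq:BSDEpsiphi} admit unique solutions $(K,L)$ and $(\psi,\phi)$, and the processes $\theta$, $\theta^0$, and $\ssfihat^*$ are well defined.

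The first step converts $\hat u^*$ into the unique optimal control $u^*$ for $\LQJ$. From \Cref{thm:solnbykohlmanntang}(iii), $\hat u^*_s = \theta_s \ssfihat^*_s - \theta^0_s$ and $\ssfihat^{\hat u^*} = \ssfihat^*$. Applying \Cref{cor:uoptimaliffuhatoptimal}(ii) then gives
\[
u^*_s = \hat u^*_s + \kl \bigl(\ssfihat^*_s + \sqrt{\gamma_s}\zeta_s\bigr) = \bigl(\theta_s + \kl\bigr) \ssfihat^*_s - \theta^0_s + \kl \sqrt{\gamma_s}\zeta_s.
\]
The second step converts $u^*$ into the corresponding strategy via \Cref{cor:uoptimaliffXoptimal}(ii); using $\ssfiu^{u^*} = \ssfihat^{\hat u^*} = \ssfihat^*$ from \Cref{lem:trafouvsuhat}, a direct substitution produces
\[
X^*_s = \gamma_s^{-\frac12}\bigl(u^*_s - \ssfiu^{u^*}_s\bigr) = \gamma_s^{-\frac12}\left(\bigl(\theta_s + \kl - 1\bigr) \ssfihat^*_s + \kl \sqrt{\gamma_s}\zeta_s - \theta^0_s\right),
\]
matching the formula stated in the corollary, while the prescribed boundary values $X^*_{0-}=x$ and $X^*_T=\xi$ come built into $\cA_0^{pm}(x,d)$.

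For the value, \Cref{thm:solnbykohlmanntang}(iv) furnishes the explicit expression for $\KTJ_0\bigl(\tfrac{d}{\sqrt{\gamma_0}} - \sqrt{\gamma_0} x, \hat u^*\bigr)$. \Cref{lem:trafouvsuhat} yields $\LQJ_0(\cdot, u^*) = \KTJ_0(\cdot, \hat u^*)$, and \Cref{propo:givenugetX} (equivalently \Cref{lem:givenXpmfindustcostfcteq}) delivers $\pmJ_0(x,d,X^*) = \LQJ_0(\cdot, u^*) - \tfrac{d^2}{2\gamma_0}$; combining these gives the claimed cost formula, and the infimum equals the value by \Cref{cor:equivofinfcostfct}. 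Uniqueness propagates through the chain: $\hat u^*$ is the unique minimizer in $\cL^2_0$ by \Cref{thm:solnbykohlmanntang}(iii); the affine transformation of \Cref{lem:trafouvsuhat} is a bijection of $\cL^2_0$ onto itself; and the maps $u \mapsto \gamma^{-\frac12}(u - \ssfiu^u)$ and $X \mapsto \gamma^{-\frac12} D^X$ supplied by \Cref{propo:givenugetX} and \Cref{lem:givenXpmfindustcostfcteq} are easily verified to be mutually inverse bijections between $\cL^2_0$ and $\cA^{pm}_0(x,d)$ modulo $dP\times ds$-null sets. There is no genuinely hard step; the argument is purely organizational, with the only nontrivial bookkeeping being the two algebraic substitutions above.
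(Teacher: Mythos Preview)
Your proof is correct and follows exactly the approach the paper intends: the paper itself merely states that the corollary is obtained ``by an application of \Cref{cor:uoptimaliffuhatoptimal} and \Cref{cor:uoptimaliffXoptimal}'' from \Cref{thm:solnbykohlmanntang}, and you have spelled out precisely this chain, including the two algebraic substitutions and the uniqueness argument via the bijectivity of the intermediate maps.
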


\begin{remark}
(i) 
Note that BSDE~\eqref{eq:BSDEKL} neither contains $\xi$ nor $\zeta$. In particular, the solution component $K$ and the process $\theta$ from~\eqref{eq:theta} do not depend on the choice of $\xi$ or $\zeta$
(although they depend on the choice of $\lambda$).
In contrast, BSDE~\eqref{eq:BSDEpsiphi} involves both $\xi$ and~$\zeta$. 
If $\xi=0$ and at least one of $\lambda$ and $\zeta$ is equivalent to $0$, we have that $(\psi,\phi)$ from~\eqref{eq:BSDEpsiphi}, $\theta^0$ from~\eqref{eq:theta0}, and $C_0$ from~\eqref{eq:optcostslongterm} vanish. 

\smallskip
(ii) Under the assumptions of \Cref{cor:solnforJpm} it holds that $K_0\le \frac{1}{2}$. This is a direct consequence of \Cref{cor:solnforJpm} and (i) above. Indeed, choose $\xi=0$ and $\zeta=0$ (by (i) this choice does not affect $K$). Then \Cref{cor:solnforJpm} and (i) show that $\pmJ_0(1,0,X^*)=K_0 \gamma_0$ for the optimal strategy $X^*$ from \Cref{cor:solnforJpm}. The suboptimal finite variation execution strategy $X_{0-}=1$, $X_s=0$, $s\in[0,T]$, in $\cA_0^{fv}(1,0)$ incurs costs $\pmJ_0(1,0,X)=\frac{\gamma_0}{2}$ and hence $K_0\le \frac{1}{2}$.

\smallskip
(iii) Our present setting essentially\footnote{The word ``essentially'' relates to different integrability conditions and to the fact that in \cite{ackermann2020cadlag} the formulation is for a continuous local martingale and a general filtration instead of Brownian motion with Brownian filtration.} includes the one in \cite{ackermann2020cadlag},
where we have $\xi=0$, $\lambda=0$, and $\eta=0$ (and, therefore, the processes $\zeta$ and $\rcor$ are not needed, cf.\ \eqref{eq:resilience} and~\eqref{eq:defcostfct1}).
In this subsetting the finite variation control problem associated with 
\eqref{eq:deviationdyndR}--\eqref{eq:defcostfct1}
is extended in \cite{ackermann2020cadlag} to a problem 
where the control $X$ is a c\`adl\`ag semimartingale that acts as integrator in the extended\footnote{The word ``extended'' relates to the fact that
\eqref{eq:deviationdyndR} and~\eqref{eq:defcostfct1}
need to be extended with certain additional terms when allowing for general semimartingale strategies, see \cite{ackermann2020cadlag}.} state dynamics of the form~\eqref{eq:deviationdyndR} and target functional of the form~\eqref{eq:defcostfct1}.
In \cite{ackermann2020cadlag} the existence of an optimal semimartingale strategy as well as the form of the optimal strategy (when it exists) is characterized in terms of a certain process $\wt\beta$,
which is in turn defined via a solution $(Y,Z,M^\perp)$ to a certain quadratic BSDE (see~(3.2) in \cite{ackermann2020cadlag}).
It is worth noting that, in the subsetting with $\xi=0$, $\lambda=0$, and $\eta=0$, all formulas in this section greatly simplify and, in particular, BSDE~\eqref{eq:BSDEKL} above is equivalent\footnote{For the sake of fair comparison, we consider the subsetting in \cite{ackermann2020cadlag} where the filtration is generated by $(W^1,\ldots,W^m)^\top$ and the continuous local martingale $M$ is $W^1$.} to BSDE~(3.2) in \cite{ackermann2020cadlag}.
The relation is $Y=K$, $Z=L^1$, $dM^\perp_s=\sum_{j=2}^m L^j_s\,dW^j_s$.
Further, in that subsetting, our process $\theta$ from~\eqref{eq:theta} reduces to the above-mentioned process $\wt\beta$ (see~(3.5) in \cite{ackermann2020cadlag}),
while $(\psi,\phi)$ from~\eqref{eq:BSDEpsiphi}, $\theta^0$ from~\eqref{eq:theta0}, and $C_0$ from~\eqref{eq:optcostslongterm} vanish.

\smallskip
(iv) It is also instructive to compare \Cref{cor:solnforJpm} above,
where we obtain that the extended to $\cA_0^{pm}(x,d)$ control problem always admits a minimizer,
with Theorem~3.4 in \cite{ackermann2020cadlag},
where it turns out that an optimal semimartingale strategy can fail to exist.
See the discussion in the end of \Cref{exa:ExampleNonexistenceFromLQct} for a specific example.
\end{remark}

\textbf{On the continuity of optimal position paths:}
In the setting of \cite{obizhaeva2013optimal} optimal position paths $X^*$ exhibit jumps (so-called block trades) at times $0$ and $T$ but are continuous on the interior $(0,T)$ (see also \cref{exa:ExampleOW} below). An interesting question is whether the continuity on $(0,T)$ prevails in the generalized setting considered in this paper. 
This is not reasonable to expect when we have the risk term with a ``sufficiently irregular'' process $\zeta$.
And, indeed, we see that the continuity of $X^*$ on $(0,T)$ can fail in \Cref{ex:120323a1} below (this is discussed in~\Cref{rem:X_discont_bc_zeta}).
More interestingly, such a continuity can already fail even without the risk term (i.e.\ $\lambda=0$) and with terminal target $\xi=0$. 
Indeed, consider the setting with $\sigma=0$, $\lambda=0$, $\xi=0$
and non-diffusive resilience process $R$ given by $R_s=\rho s$ (with $\rho$ being a deterministic constant). Then it follows from \cite[Example~6.2]{ackermann2020cadlag} that continuity of the price impact process $\gamma$ is not sufficient for continuity of optimal position paths $X^*$ on $(0,T)$. It is shown that if the paths of $\gamma$ are absolutely continuous, then a jump of the weak derivative of $\gamma$ on $(0,T)$ already causes $X^*$ to jump on $(0,T)$.
Moreover, it is possible that the random terminal target position $\xi$ causes the optimal position path $X^*$ to jump in $(0,T)$ with all other input processes being continuous. We present an example for this phenomenon in \Cref{subsec:random_term_target_jump}.

A way to obtain sufficient conditions for the continuity of $X^*$ on $(0,T)$ consists of combining \cref{cor:solnforJpm} with path regularity results for BSDEs. Indeed, if the coefficient processes $\rho, \mu, \sigma, \eta, \rcor, \lambda, \zeta$ are continuous and if one can ensure that the solution components $L^1, L^2$ and $\phi^1, \phi^2$ (which correspond to the martingale representation part of the solution) of the BSDE \eqref{eq:BSDEKL} resp.\ \eqref{eq:BSDEpsiphi} have continuous sample paths, then \cref{cor:solnforJpm} ensures that $X^*$ also has continuous sample paths on $(0,T)$. Results that guarantee continuity of BSDE solutions in a Markovian framework, including the quadratic case, can for example be found in \cite{imkeller2010path}.

\section{Examples}\label{sec:examples}
In this section we apply the results from the preceding sections in specific  case studies.

\subsection{The Obizhaeva-Wang model with random targets}\label{exa:ExampleOW}
The models developed by Obizhaeva and Wang \cite{obizhaeva2013optimal} can be considered as special cases of the model set up in \Cref{sec:problemformulation}. Indeed, we obtain the problem of \cite[Section 6]{obizhaeva2013optimal} by setting $\mu\equiv 0$, $\sigma\equiv 0$, $\eta\equiv 0$, $\rcor \equiv 0$, $\lambda\equiv 0$ and choosing $\rho \in (0,\infty)$ and $\xi \in \R$ as deterministic constants.

\begin{ex}\label{ex:120323a1}
In this example we apply our results (in particular, \Cref{cor:solnforJpm}) and provide closed-form solutions (see~\eqref{eq:gen_OW_opt_strat} below) for optimal progressively measurable execution strategies in versions of these problems which allow for general random terminal targets $\xi$ and general running targets $\zeta$.

To this end let $x,d \in \R$. Suppose that $\mu\equiv 0$, $\sigma\equiv 0$, $\eta\equiv 0$, and $\rcor \equiv 0$. 
	Furthermore, assume that $\rho\in (0,\infty)$ and $\lambda\in [0,\infty)$ are deterministic constants.
	We take some $\xi$ and $\zeta$ as specified in \Cref{sec:fvstrat} (in particular, see~\eqref{eq:conditionxi}).
	Note that the conditions of \Cref{thm:solnbykohlmanntang} and \Cref{cor:solnforJpm} hold true, and that $\gamma_s=\gamma_0$ for all $s \in [0,T]$. 
	In the current setting, BSDE~\eqref{eq:BSDEKL} reads 
	\begin{equation}\label{eq:BSDEKLOW}
		\begin{split}
			dK_s & = \bigg( \frac{\rho^2}{\rho+\lambda} K_s^2 + \frac{2\lambda \rho}{\rho + \lambda} K_s - \frac{\lambda \rho}{\rho + \lambda}  \bigg) ds + \sum_{j=1}^m L_s^j dW_s^j, \quad s\in [0,T], 
			\quad K_T =\frac12.
		\end{split}
	\end{equation}
	By \Cref{thm:solnbykohlmanntang}, there exists a unique solution $(K,L)$. Since the driver and the terminal condition in \eqref{eq:BSDEKLOW} are deterministic, we obtain that $L\equiv 0$, and hence \eqref{eq:BSDEKLOW} is in fact a scalar Riccati ODE with constant coefficients. Such an equation can be solved explicitly, and in our situation we obtain in the case $\lambda>0$ that
	\begin{equation*}
K_s=\frac{1}{2}
\frac{\lambda\tanh\left( \frac{\sqrt{\lambda} \rho(T-s)}{\sqrt{\lambda+\rho}}\right)+\sqrt{\lambda(\rho+\lambda)} }{(\frac{\rho}{2}+\lambda)\tanh\left( \frac{\sqrt{\lambda} \rho(T-s)}{\sqrt{\lambda+\rho}}\right)+\sqrt{\lambda(\rho+\lambda)}}, \quad s \in [0,T],
\end{equation*}
and in the case $\lambda=0$ that
	\begin{equation}\label{eq:K_OW_lambda_0}
			K_s=\frac{1}{2+(T-s)\rho}, \quad s \in [0,T].
			\end{equation}
	The process $\theta$ from \eqref{eq:theta} here is given by 
	$\theta_s = \frac{\rho}{\lambda+\rho} K_s$, $s\in[0,T]$. 
	BSDE \eqref{eq:BSDEpsiphi} becomes 
	\begin{equation}\label{eq:BSDEpsiphiOW}
	d\psi_s = 
	\! \left( \! \frac{\rho\lambda}{\lambda+\rho}\! + \! \rho \theta_s \! \right) \! \psi_s ds 
	+ \frac{\rho \lambda}{\lambda + \rho} \sqrt{\gamma_0} \zeta_s (1-K_s) ds 
	+ \sum_{j=1}^m \! \phi_s^j dW_s^j, \,\,\, s\in [0,T], \,\,\,
	\psi_T  = \!-\frac12\sqrt{\gamma_0} \xi.
	\end{equation}
	Again, by \Cref{thm:solnbykohlmanntang}, there exists a unique solution $(\psi,\phi)$.
	The solution component $\psi$ is given by 
	\begin{equation*}
		\begin{split}
			\psi_s & = \Gamma_s^{-1} \sqrt{\gamma_0} \left( - \frac12 \Gamma_T E_s[\xi] - \frac{\rho \lambda}{\lambda +\rho} E_s\left[\int_s^T \Gamma_r (1-K_r) \zeta_r\,dr\right] \right), \quad s\in [0,T],
		\end{split}
	\end{equation*} 
	where 
	\begin{equation}\label{eq:GammaOW}
		\Gamma_s
		=\exp\left( -\rho \int_0^s  
		\left( \frac{\lambda}{\lambda+\rho} + \theta_r  \right)
		dr \right)
		= \exp\left( -\frac{\rho}{\lambda + \rho} \left( \lambda s + \rho \int_0^s K_r dr \right) \right)
		, \quad s \in [0,T]. 
	\end{equation}
	It holds for the process in \eqref{eq:theta0} that 
	$\theta_s^0 = \frac{\rho}{\lambda+\rho} \psi_s$, $s\in[0,T]$. 
	Further, SDE~\eqref{eq:optssfihat} reads 
	\begin{equation*}
	d\ssfihat_s^*  = \! -\rho \left(\! \frac{\lambda}{\lambda+\rho} + \theta_s \! \right) \ssfihat_s^* ds 
	+ \rho \left(\! \theta_s^0 - \! \sqrt{\gamma_0} \zeta_s \frac{\lambda}{\lambda+\rho} \right)\! ds, 
	\,\,\, s \in [0,T], \,\,\,
	\ssfihat_0^* = \frac{d}{\sqrt{\gamma_0}} - \sqrt{\gamma_0} x, 
	\end{equation*}
	and has solution 
	\begin{equation*}
		\ssfihat_s^* = \Gamma_s \left( \frac{d}{\sqrt{\gamma_0}} - \sqrt{\gamma_0} x 
		+ \rho \int_0^s \Gamma_r^{-1} \left( \theta_r^0 - \sqrt{\gamma_0} \zeta_r \frac{\lambda}{\lambda+\rho} \right) dr \right), \quad s\in[0,T], 
	\end{equation*}
	with $\Gamma$ from~\eqref{eq:GammaOW}.
	It then follows from~\Cref{cor:solnforJpm} that 
	$X^*=(X^*_s)_{s\in[0-,T]}$ defined by $X_{0-}^*=x$, $X_T^*=\xi$, and, 
	for $s\in [0,T)$,  
	\begin{equation}\label{eq:gen_OW_opt_strat}
	\begin{split}
	&	X_s^* 
		= \gamma_0^{-\frac12} \left( \left( \theta_s - \frac{\rho}{\lambda + \rho} \right) \ssfihat_s^* - \theta_s^0 \right) + \zeta_s \frac{\lambda}{\lambda + \rho} \\
		& = \frac{\rho}{\lambda + \rho} (1-K_s) \Gamma_s\! \left( \! x-\! \frac{d}{\gamma_0} \!
		+ \! \frac{\rho}{\lambda + \rho} \int_0^s \! \Gamma_r^{-1} \! \left( \! \lambda \zeta_r \! - \! \frac{\rho}{\sqrt{\gamma_0}} \psi_r \! \right) dr \! \right) \!
		+ \! \frac{\rho}{\lambda + \rho} \left( \frac{\lambda}{\rho} \zeta_s \! -\! \frac{1}{\sqrt{\gamma_0}} \psi_s\! \right)
	\end{split}
	\end{equation}
	is the (up to $dP\!\times\! ds|_{[0,T]}$-null sets unique) execution strategy in $\cA_0^{pm}(x,d)$ that minimizes~$\pmJ$.
\end{ex}

\begin{remark}\label{rem:X_discont_bc_zeta}
From \Cref{ex:120323a1} we see that discontinuities of the target process $\zeta$ can cause jumps of the optimal position path $X^*$ in $(0,T)$.
Indeed, as $\theta$, $\theta^0$ and $\ssfihat^*$ are continuous, it follows from~\eqref{eq:gen_OW_opt_strat} that, in the case $\lambda>0$, paths of the optimal strategy~$X^*$ inherit discontinuities from $\zeta$ on $(0,T)$
(in particular, $X^*$ jumps on $(0,T)$ whenever~$\zeta$ does).
\end{remark}

In the next example we study the case $\lambda\equiv 0$ in more detail.

\begin{ex} 
In the setting of the previous example suppose that $\lambda \equiv 0$. If the terminal target $\xi\in \R$ is a deterministic constant, then it follows from \cite[Proposition 3]{obizhaeva2013optimal}  that the optimal finite variation execution strategy is given by
\begin{equation}\label{eq:060422a1}
			X_s^* = 
			\left( x-\xi -\frac{d}{\gamma_0} \right) \frac{1+(T-s)\rho}{2+T\rho} 
			+ \xi, \quad s\in[0,T).
\end{equation} 
So the optimal strategy consists of potential block trades (jumps of $X^*$) at times $0$ and $T$ and a continuous linear trading program on $[0,T)$. In the following we analyze how this structure changes as we allow for a random terminal target $\xi$.

First recall that the solution of BSDE~\eqref{eq:BSDEKLOW} is given in this case by \eqref{eq:K_OW_lambda_0}. 
It follows that $\Gamma$ from \eqref{eq:GammaOW} simplifies to $\Gamma_s = \frac{2+(T-s)\rho}{2+T\rho}$, $s \in [0,T]$.
		For the solution component $\psi$ of BSDE~\eqref{eq:BSDEpsiphiOW}, we thus obtain 
		\begin{equation*}
			\psi_s = - \frac{\sqrt{\gamma_0}}{2+(T-s)\rho} E_s[\xi] , \quad s\in [0,T].
		\end{equation*}
		The optimal strategy from \eqref{eq:gen_OW_opt_strat} on $[0,T)$ becomes, 
		for $s\in[0,T)$,  
		\begin{equation}\label{eq:optstratOWxi}
		\begin{split}
			X_s^* & = (1-K_s) \Gamma_s \left( x-\frac{d}{\gamma_0} - \rho\int_0^s \Gamma_r^{-1} \frac{1}{\sqrt{\gamma_0}} \psi_r dr \right) 
			- \frac{1}{\sqrt{\gamma_0}} \psi_s \\
			& = \left( x-\frac{d}{\gamma_0} \right) \frac{1+(T-s)\rho}{2+T\rho} 
			+ \rho (1+(T-s)\rho) \int_0^s \! \frac{E_r[\xi]}{(2+(T-r)\rho)^2} dr 
			 + \frac{E_s[\xi]}{2+(T-s)\rho} .
		\end{split}
		\end{equation}
		Integration by parts implies that (note that $(E_r[\xi])_{r \in [0,T]}$ is a continuous martingale)
		\begin{equation*}
			\begin{split}
				& \int_0^s \frac{E_r[\xi]}{(2+(T-r)\rho)^2} dr 
				= \int_0^s E_r[\xi] d\frac{1}{\rho(2+(T-r)\rho)} \\
				& = \frac{E_s[\xi]}{\rho (2+(T-s)\rho)} - \frac{E_0[\xi]}{\rho (2+T\rho)} 
				- \int_0^s \frac{1}{\rho(2+(T-r)\rho)} dE_r[\xi], 
				\quad s \in [0,T).
			\end{split}
		\end{equation*}
Substituting this into~\eqref{eq:optstratOWxi} yields, for $s\in[0,T)$,
\begin{align*}
X_s^*
&=
\left( x-E_0[\xi] -\frac{d}{\gamma_0} \right) \frac{1+(T-s)\rho}{2+T\rho}
+ E_s[\xi] - \int_0^s \frac{1+(T-s)\rho}{2+(T-r)\rho} dE_r[\xi]
\\
&=
\left( x-E_0[\xi] -\frac{d}{\gamma_0} \right) \frac{1+(T-s)\rho}{2+T\rho}
+ E_0[\xi] + \int_0^s \left(1-\frac{1+(T-s)\rho}{2+(T-r)\rho}\right) dE_r[\xi].
\end{align*}
We, finally, obtain the alternative representation 
		\begin{equation*}
			X_s^* = 
			\left( x-E_0[\xi] -\frac{d}{\gamma_0} \right) \frac{1+(T-s)\rho}{2+T\rho} 
			+ E_0[\xi] + \int_0^s \frac{1+(s-r)\rho}{2+(T-r)\rho} dE_r[\xi], \quad s \in [0,T), 
		\end{equation*}
		for \eqref{eq:optstratOWxi}. We see that this optimal strategy $X^*\in \cA_0^{pm}(x,d)$ consists of two additive parts: The first part exactly corresponds to the optimal deterministic strategy in~\eqref{eq:060422a1} where the deterministic terminal target is replaced by the expected terminal target $E_0[\xi]$. The second part represents fluctuations around this deterministic strategy which incorporate updates about the random terminal target $\xi$. Note that this stochastic integral vanishes in expectation,
although this is \emph{not} a martingale (indeed, the time $s$ is not only the upper bound of integration but also appears in the integrand).
\end{ex}

\subsection{A discontinuous optimal position path for continuous inputs}
\label{subsec:random_term_target_jump}

We now show that the optimal strategy can have jumps inside $(0,T)$ even if all input processes, including $\zeta$, are continuous. 
To this end, let $x,d\in\R$. Take $\lambda \equiv 0$, $\zeta \equiv 0$, $\eta \equiv 0$, $\rcor\equiv 0$, and $\mu \equiv 0$, and assume that $\sigma \in (0,\infty)$ and $\rho \in (\frac{\sigma^2}{2},\infty)$ are deterministic constants. 
Moreover, we will later consider an appropriate random terminal target $\xi$, satisfying the assumptions of \Cref{sec:fvstrat}, to produce a jump of the optimal strategy. 

Note that the conditions of \Cref{thm:solnbykohlmanntang} and \Cref{cor:solnforJpm} hold true.
In particular, there exists a unique solution $(K,L)$ of BSDE~\eqref{eq:BSDEKL}, and it is given by
(compare also with \cite[Section 5.2]{ackermann2020cadlag})
$L\equiv 0$ and 
\begin{equation*}
	K_s = \frac{\rho-\frac{\sigma^2}{2}}{\sigma^2} \mathcal{W}\left( \frac{\rho-\frac{\sigma^2}{2}}{\sigma^2} e^{c_0-\frac{\rho^2}{\sigma^2}s} \right)^{-1}, \quad s \in [0,T],
\end{equation*}
where $\mathcal{W}$ denotes the Lambert $W$ function and $c_0=\ln(2)+\frac{1}{\sigma^2} (2\rho-\sigma^2+\rho^2 T)$. 
The process $\theta$ from \eqref{eq:theta} becomes 
\begin{equation*}
	\theta_s = \frac{\rho K_s}{\rho-\frac{\sigma^2}{2} + \sigma^2 K_s}, \quad s\in[0,T],
\end{equation*}
and both $\theta$ and $K$ are deterministic, increasing, continuous, $(0,1/2]$-valued functions. 
 
For some $t_0\in(0,T)$, let 
\begin{equation*}
	\xi = -2\gamma_0^{-\frac12}  \left( \sigma \int_{t_0}^T \Gamma_s \theta_s ds + \int_{t_0}^T \Gamma_s dW_s^1 \right)\, \exp\left( - \frac{\sigma}{2} W^1_T + \frac38 \sigma^2 T + \left( \rho-\frac{\sigma^2}{2} \right)  \int_0^T \theta_s ds \right),
\end{equation*}
where 
$\Gamma_t = \exp( - \frac{\sigma^2}{8} t - ( \rho-\frac{\sigma^2}{2} ) \int_0^t \theta_s ds )$, $t\in[0,T]$. 
Note that $\xi$ is $\cF_T$-measurable and that $E[\gamma_T \xi^2]<\infty$. 
The terminal target $\xi$ here is defined in such a way that the unique solution $(\psi,\phi)$ of BSDE~\eqref{eq:BSDEpsiphi}  (cf.\ \Cref{thm:solnbykohlmanntang}) is given by 
$\phi^1=1_{[t_0,T]}$, $\phi^j\equiv 0$, $j\in\{2,\ldots,m\}$, and 
\begin{equation*}
	\psi_t = \begin{cases}
		0, & 0\leq t < t_0,\\
		\Gamma_t^{-1} \left( \sigma \int_{t_0}^t \Gamma_s \theta_s ds + \int_{t_0}^t \Gamma_s dW_s^1 \right), & t_0\leq t \leq T.
	\end{cases}
\end{equation*}
It follows for the process in~\eqref{eq:theta0} that 
\begin{equation*}
	\theta_t^0 = \begin{cases}
		0, & 0\leq t < t_0,\\
		\frac{\left(\rho-\frac{\sigma^2}{2}\right)\psi_t + \sigma}{\rho - \frac{\sigma^2}{2} + \sigma^2 K_t }, & t_0\leq t \leq T .
	\end{cases}
\end{equation*}
We thus have that 
\begin{equation*}
	\Delta \theta_{t_0}^0 
	= \frac{\sigma}{\rho-\frac{\sigma^2}{2}+\sigma^2 K_{t_0}} 
	> 0 .
\end{equation*}
From \Cref{cor:solnforJpm} we obtain existence of a unique optimal strategy $X^*$ and that  
$X_s^* = \gamma_s^{-\frac12} ( (\theta_s - 1) \ssfihat_s^* - \theta_s^0 )$, $s\in(0,T)$. 
Since $\gamma$, $\theta$, and $\ssfihat_s^*$ (see~\eqref{eq:optssfihat}) are continuous and $\Delta \theta_{t_0}^0 >0$, it holds that 
$\Delta X_{t_0}^* = -\gamma_{t_0}^{-\frac12} \Delta \theta_{t_0}^0 < 0$. 
Hence, the optimal strategy has a jump at $t_0\in (0,T)$.

\subsection{An example where $\fvJ$ does not admit a minimizer}\label{exa:ExampleNonexistenceFromLQct}

	Let $x,d\in\R$ with $x\neq \frac{d}{\gamma_0}$. 
	Suppose that $\sigma\equiv 0$, $\eta \equiv 0$, $\lambda\equiv 0$, $\rcor \equiv 0$, $\zeta\equiv 0$, $\xi=0$. 
	Choose $\mu$ to be a bounded deterministic c\`adl\`ag function such that there exists $\delta \in (0,T)$ with $\mu$ having infinite variation on $[0,T-\delta]$, and take $\rho \in \R\setminus\{0\}$ such that there exists $\varepsilon>0$ with $2\rho+\mu\geq \varepsilon$. 
	Note that this corresponds to the setting in \cite[Example~6.4]{ackermann2020cadlag}. Moreover, observe that the conditions of \Cref{cor:solnforJpm} are satisfied. 
	In the current setting, BSDE~\eqref{eq:BSDEKL} becomes 
	\begin{equation*}
		\begin{split}
			dK_s & = \left(-\mu_s K_s + \frac{2(\rho+\mu_s)^2 K_s^2}{2\rho+\mu_s} \right) ds + \sum_{j=1}^m L_s^{j} dW_s^j, \quad s \in [0,T],\quad
			K_T=\frac12.
		\end{split}
	\end{equation*}
	Its solution is given by $(K,0)$, where (see also $Y$ in \cite[Section~6]{ackermann2020cadlag})
	\begin{equation*}
		K_s=e^{\int_s^T\mu_r dr} \left( \int_s^T \frac{2(\rho+\mu_r)^2}{2\rho+\mu_r} e^{\int_r^T \mu_l dl} dr + 2 \right)^{-1}, \quad s\in[0,T],
	\end{equation*}
	is a deterministic continuous function of finite variation. 
	We have that 
	\begin{equation*}
		\theta_s=\frac{2(\rho+\mu_s)}{2\rho+\mu_s} K_s, \quad s\in [0,T],
	\end{equation*}
	which is the same as $\wt \beta$ in \cite[Example~6.4]{ackermann2020cadlag}. 
	The solution of BSDE~\eqref{eq:BSDEpsiphi} is given by $(\psi,\phi)=(0,0)$, and it holds $\theta^0\equiv 0$. 
	Furthermore, \eqref{eq:optssfihat} reads 
	\begin{equation*}
		d\ssfihat^*_s = \left( \frac{\mu_s}{2} - (\rho+\mu_s) \theta_s \right) \ssfihat_s^* ds, \quad s \in [0,T], \quad \ssfihat_0^*=\frac{d}{\sqrt{\gamma_0}} - \sqrt{\gamma_0} x, 
	\end{equation*}
	and is solved by the continuous deterministic finite-variation function
	\begin{equation*}
		\ssfihat_s^* = \left( \frac{d}{\sqrt{\gamma_0}} - \sqrt{\gamma_0} x \right) \exp\left( \int_0^s 
		\left( \frac{\mu_r}{2} - (\rho + \mu_r) \theta_r \right)
		dr \right) ,\quad s \in [0,T],
	\end{equation*}
	which is nonvanishing due to our assumption $x\ne\frac d{\gamma_0}$.\footnote{At this point it is easy to explain why we exclude the case $x=\frac d{\gamma_0}$ in this example. In the case $x=\frac d{\gamma_0}$ we get that $\ssfihat^*\equiv0$ and then the optimal strategy is to close the position immediately, i.e., $X^*_{0-}=x$, $X^*_s=0$, $s\in[0,T]$, which is always a finite-variation strategy.}
	By \Cref{cor:solnforJpm}, there exists a (up to $dP\times ds|_{[0,T]}$-null sets) unique minimizer $X^*=(X^*_s)_{s\in[0-,T]}$ of $\pmJ$ in $\cA_0^{pm}(x,d)$, namely 
	\begin{equation*}
		\begin{split}
			&X^*_{0-}=x, \quad X^*_T=0, \quad
			X_s^* = \gamma_s^{-\frac12} \left(\theta_s -1 \right) \ssfihat_s^*, \quad s\in [0,T).
		\end{split}
	\end{equation*} 
	Assume by contradiction that there exists a minimizer $X^0=(X^0_s)_{s\in[0-,T]}$ of $\fvJ$ in $\cA_0^{fv}(x,d)$. 
	We know from \Cref{cor:equivofinfcostfct} that $X^0$ is then also a minimizer of $\pmJ$ in $\cA_0^{pm}(x,d)$. 
	It follows that $X^0=X^*$ $dP\times ds|_{[0,T]}$-a.e. 
	Since $\ssfihat^*$ is nowhere $0$, we obtain that 
	\begin{equation}\label{eq:030422a1}
		1+\frac{\gamma^{\frac12} X^0}{\ssfihat^*} = \theta \quad  dP\times ds|_{[0,T]}\text{-a.e.}
	\end{equation}
Observe that the left-hand side is a process of finite variation. 
	On the other hand, our assumption on $\mu$ easily yields that $\theta$ has infinite variation.
	This contradiction proves that in the setting of this example, $\fvJ$ does not admit a minimizer in $\cA_0^{fv}(x,d)$.
	
	We can say even more:
	In this example there does not exist a semimartingale optimal strategy.\footnote{Under a \emph{semimartingale strategy} we formally understand a semimartingale that is an element of $\cA_0^{pm}(x,d)$.}
	Indeed, if we had a semimartingale $X^0$ as a minimizer, we would still get~\eqref{eq:030422a1} (with a semimartingale $X^0$).
	The left-hand side would then be a semimartingale.
	On the other hand, it is shown in \cite[Example~6.4]{ackermann2020cadlag} that there does not exist a semimartingale $\beta$ such that $\beta=\theta$  $dP\times ds|_{[0,T]}$-a.e.
	Thus, the cost functional does not have a minimizer in the set of semimartingales, but we are now able to find a minimizer in the set of progressively measurable execution strategies.

\subsection{An example with a diffusive resilience}\label{sec:generalres}

As already mentioned in the introduction, the literature on optimal trade execution in Obizhaeva-Wang type models 
typically assumes that $R$ is an increasing process.
In \cite{ackermann2020cadlag} and \cite{ackermann2021negativeresilience} $R$ is allowed to have finite variation.
Now we consider an example with a truly diffusive $R$.

	Let $x,d\in\R$ with $x\neq\frac{d}{\gamma_0}$. Let $\xi=0$, $\lambda \equiv 0$, $\zeta \equiv 0$, and $\mu\equiv 0$. Suppose that $\rcor \in [-1,1]$ and $\eta,\rho,\sigma \in \R$ are deterministic constants such that 
	$\kappa = \frac12 (2\rho-\sigma^2-\eta^2-2\sigma \eta \rcor) >0$ 
	and $\sigma^2+\eta^2+2\sigma\eta \rcor>0$ (in particular, we thus need $\rho>0$). 
	Note that the assumptions of \Cref{cor:solnforJpm} are satisfied. 
	We moreover remark that the subsetting where $\eta\equiv 0$ corresponds to the setting in \cite[Section~5.2]{ackermann2020cadlag}. That means, the difference to \cite[Section~5.2]{ackermann2020cadlag} is that we now consider a more general resilience.  
	The Riccati-BSDE~\eqref{eq:BSDEKL} becomes
	\begin{equation*}
		dK_s = \frac{(\rho K_s + (\sigma+\eta \rcor)L_s^1 + \eta \sqrt{1-\rcor^2}L_s^2 )^2}{(\sigma^2+\eta^2+2\sigma\eta\rcor)K_s + \kappa} ds 
		-\sigma L_s^1 ds 
		+ \sum_{j=1}^m \! L_s^j dW_s^j, \,\,\, s\in[0,T], \,\,\,
		K_T  = \frac12.
	\end{equation*}
	This has solution $(K,L)=(K,0)$ with 
	\begin{equation*}
		K_s = \frac{\kappa}{\sigma^2+\eta^2+2\sigma\eta\rcor} 
		\mathcal{W}\left( \frac{\kappa}{\sigma^2+\eta^2+2\sigma\eta\rcor} 
		\exp\left( c - \frac{\rho^2 s}{\sigma^2+\eta^2+2\sigma\eta\rcor} \right)  \right)^{-1}, \quad s \in [0,T],
	\end{equation*}
	and 
	$c = \ln(2) + \frac{2\kappa + \rho^2 T}{\sigma^2+\eta^2+2\sigma\eta\rcor}$ 
(compare also with \cite[Section 5.2]{ackermann2020cadlag}).
	We further have that 
	$\theta_s = \frac{\rho K_s}{ (\sigma^2+\eta^2+2\sigma\eta\rcor)K_s + \kappa}$, $s\in [0,T]$. 
	Observe that $(\psi,\phi)=(0,0)$ is the solution of~\eqref{eq:BSDEpsiphi} in the present setting and that $\theta^0\equiv 0$ in~\eqref{eq:theta0}. 
	Moreover, we have that SDE~\eqref{eq:optssfihat} reads 
	\begin{equation*}
		d\ssfihat_s^* = \!\left(\! - \frac{\sigma^2}{8} \! - \!\left(\! \rho - \frac{\sigma^2+\sigma\eta\rcor}{2} \right) \theta_s \! \right) \! \ssfihat_s^* ds 
		+ \left( \frac{\sigma}{2}\! -\! (\sigma+\eta\rcor) \theta_s \right) \! \ssfihat_s^* dW_s^1 
		- \eta \sqrt{1-\rcor^2} \theta_s \ssfihat_s^* dW_s^2 
	\end{equation*}
	for $s\in[0,T]$, 
	with start in $\ssfihat_0^* = \frac{d}{\sqrt{\gamma_0} } - \sqrt{\gamma_0} x$;
	hence, 
	\begin{equation*}
		\begin{split}
			\ssfihat_s^* & = \left( \frac{d}{\sqrt{\gamma_0} } - \sqrt{\gamma_0} x \right) 
			\exp\!\left( - \frac{\sigma^2 s}{4} - (\rho-\sigma^2-\sigma\eta\rcor) \int_0^s \theta_r dr 
			- \frac{\sigma^2 + \eta^2 + 2 \sigma \eta \rcor }{2} \int_0^s \theta_r^2 dr \right) \\
			& \quad \cdot \exp\left( \frac{\sigma}{2} W_s^1 - (\sigma + \eta \rcor) \int_0^s \theta_r dW_r^1 - \eta \sqrt{1-\rcor^2} \int_0^s \theta_r dW_r^2 \right), \quad s\in[0,T].
		\end{split}
	\end{equation*}
	It follows from \Cref{cor:solnforJpm} that for $s \in [0,T)$ the optimal execution strategy is given by
	\begin{equation*}
		\begin{split}
			X_s^* & = \left( x - \frac{d}{\gamma_0} \right) (1-\theta_s) 
			\exp\!\left( - (\rho-\sigma^2-\sigma\eta\rcor) \int_0^s \theta_r dr 
			- \frac{\sigma^2 + \eta^2 + 2 \sigma \eta \rcor }{2} \int_0^s \theta_r^2 dr \right) \\
			& \quad \cdot \exp\left( - (\sigma + \eta \rcor) \int_0^s \theta_r dW_r^1 - \eta \sqrt{1-\rcor^2} \int_0^s \theta_r dW_r^2 \right) .
		\end{split}
	\end{equation*}
	We can show that $K$ and $\theta$ both are continuous, deterministic, increasing, $(0,1/2]$-valued functions of finite variation. 
	Since $\theta<1$, the optimal strategy on $[0,T)$ always has the same sign as $x-\frac{d}{\gamma_0}$.  
	Moreover, 
	the optimal strategy is stochastic and has infinite variation, as in \cite[Section~5.2]{ackermann2020cadlag}. 
	In contrast to \cite[Section~5.2]{ackermann2020cadlag}, where the price impact always has infinite variation, 
	we can here set $\sigma\equiv 0$ for a choice of $\eta^2\in(0,2\rho)$. In this case, the price impact $\gamma\equiv \gamma_0$ is a deterministic constant, yet the optimal strategy has infinite variation 
	(due to the infinite variation in the resilience $R$).

	Observe furthermore that by making use of $\eta$ and $\rcor$, we can choose the parameters in the current setting in such a way that $\kappa>0$ and $\sigma^2+\eta^2+2\sigma\eta \rcor>0$ are satisfied, but condition~(3.1) in \cite{ackermann2020cadlag}, i.e., $2\rho-\sigma^2>0$, is violated.
	
	With regard to \Cref{exa:ExampleNonexistenceFromLQct} we remark that 
	in both sections there does not exist an optimal strategy in $\cA_0^{fv}(x,d)$, but opposed to \Cref{exa:ExampleNonexistenceFromLQct}, it holds in the current section that there exists a semimartingale optimal strategy.

\subsection{Cancellation of infinite variation}\label{sec:cancelinfvar}

	We now present an example where the infinite variation in the price impact process $\gamma$ is ``cancelled'' by the infinite variation in the resilience process $R$ and we obtain the optimal strategy $X^*$ of finite variation.
	
	Let $x,d\in\R$, 
	$\xi=0$, $\lambda \equiv 0$, $\zeta \equiv 0$, and $\mu\equiv 0$. Suppose that $\rcor=-1$ and $\rho>0$ are deterministic constants, and that $\eta$ and $\sigma$ are progressively measurable, $dP\times ds|_{[0,T]}$-a.e.\ bounded processes such that $\eta=\sigma$ $dP\times ds|_{[0,T]}$-a.e.
	It then holds $dP\times ds|_{[0,T]}$-a.e.\ that $\sigma^2 + \eta^2 + 2\sigma\eta\rcor = 0$ and $\kappa=\rho>0$. In particular, the assumptions of \Cref{cor:solnforJpm} are satisfied. 
	The BSDE 
	\begin{equation*}
		\begin{split}
			d K_s & = \rho K_s^2 ds - \sigma_s L_s^1 ds + \sum_{j=1}^m L_s^j dW_s^j, \quad s\in [0,T],
			\quad K_T =\frac12,
		\end{split}
	\end{equation*}
	which is BSDE~\eqref{eq:BSDEKL} in the present setting, 
	has the solution $(K,L)=(K,0)$ with 
	$K_s = \frac{1}{2+(T-s)\rho}$, $s\in[0,T]$ 
	(cf.\ \Cref{exa:ExampleOW}). 
	It holds that $\theta \equiv K$, that $(\psi,\phi)=(0,0)$ is the solution of~\eqref{eq:BSDEpsiphi}, and that $\theta^0 \equiv 0$. 
	It follows that~\eqref{eq:optssfihat} has the solution 
	\begin{equation*}
		\begin{split}
			\ssfihat_s^* & = 
			\left( \frac{d}{\sqrt{\gamma_0} } - \sqrt{\gamma_0} x \right) 
			\exp\left( - \frac{1}{4} \int_0^s \sigma_r^2 dr - \rho \int_0^s K_r dr 
			+ \frac{1}{2} \int_0^s \sigma_r dW_r^1 \right), \quad s\in[0,T] .
		\end{split}
	\end{equation*}
	For the optimal execution strategy from \Cref{cor:solnforJpm} we then compute that 
	\begin{equation*}
		X_s^* = \left( x - \frac{d}{\gamma_0} \right) 
		\frac{1+(T-s)\rho}{2+T\rho}, \quad s\in[0,T).
	\end{equation*}
The optimal strategy in the current setting with general
stochastic
$\sigma=\eta$ and negative correlation
$\rcor=-1$
is thus the same as in the Obizhaeva-Wang setting $\sigma=0=\eta$ (cf.\ \cite[Proposition~3]{obizhaeva2013optimal}; see also \cite[Section~4.2]{ackermann2020cadlag}). 
	In particular, the optimal strategy is deterministic and of finite variation, although the price impact $\gamma$ and the resilience $R$ are both stochastic and of infinite variation (at least if $\sigma=\eta$
is nonvanishing).

We finally remark
that this setting does not reduce to the Obizhaeva-Wang setting $\sigma=0=\eta$.
Indeed, 
while the optimal strategies for $\sigma=0=\eta$ and for general
stochastic 
$\sigma=\eta$
with correlation $\rcor=-1$ 
coincide, this is not true for the associated deviation processes. 
	In general, it holds that 
	\begin{equation*}
		D_s^{X^*} = -\gamma_0 \left( x - \frac{d}{\gamma_0} \right) \frac{1}{2+T\rho} \exp\left( \int_0^s \eta_r dW_r^1 - \frac12 \int_0^s \eta_r^2 dr \right), \quad s\in[0,T), 
	\end{equation*}
which for
a nonvanishing $\eta$ 
and $x\neq \frac{d}{\gamma_0}$ has infinite variation,
whereas in the Obizhaeva-Wang setting is constant (take $\eta= 0$).

\section{Proofs}\label{sec:proofs}

In this section, we provide the proofs for the results presented in \Cref{sec:problemformulation} and \Cref{sec:reductiontoLQ}. 
We furthermore state and prove some auxiliary results that are used in the proofs of the main results.

For reference in several proofs, note that the order book height, i.e., the inverse of the price impact, has dynamics 
\begin{equation}\label{eq:LOBheight}
	d\gamma_s^{-1} = \gamma_s^{-1} \left( -(\mu_s-\sigma_s^2)ds - \sigma_s dW^1_s \right), \quad s\in[0,T].
\end{equation}
We moreover observe that by It\^o's lemma it holds that 
\begin{equation}\label{eq:sqrtgammadyn}
	\begin{split}
		d\gamma_s^{\frac12} & = \gamma_s^{\frac12} \left( \frac12\mu_s - \frac{1}{8} \sigma_s^2 \right)ds + \frac12 \gamma_s^{\frac12} \sigma_s dW^1_s, \quad s \in [0,T],
	\end{split}
\end{equation}
\begin{equation}\label{eq:sqrtalphadyn}
	\begin{split}
		d\gamma_s^{-\frac12} & = \gamma_s^{-\frac12} \left( -\frac12\mu_s + \frac{3}{8} \sigma_s^2 \right)ds - \frac12 \gamma_s^{-\frac12} \sigma_s dW^1_s, \quad s \in [0,T].
	\end{split}
\end{equation}

\begin{proof}[Proof of \Cref{propo:costfunctionalpart}]
	Observe that integration by parts implies that for all $s \in [t,T]$
	\begin{equation*}
		\begin{split}
			d(\nu_s D_s) 
			& = \nu_s dD_s + D_s d\nu_s + d[\nu, D]_s \\
			& = -\nu_sD_s dR_s + \nu_s \gamma_s dX_s +  \nu_s D_s dR_s + \nu_s D_s d[R]_s + d[\nu, D]_s\\
			& = \nu_s \gamma_s dX_s + \nu_s D_s d[R]_s + d[\nu, D]_s.
		\end{split}
	\end{equation*}
	Since 
	$d[\nu,D]_s = \nu_s d[R,D]_s = - \nu_s D_s d[R]_s$, $s\in[t,T]$, 
	it follows that the process $\mD_s=\nu_sD_s$, $s\in[t,T]$, $\mD_{t-}=d$, satisfies
	\begin{equation}\label{eq:Dtildedyn}
		\begin{split}
			d\mD_s  = d(\nu_s D_s) 
			& = \nu_s \gamma_s dX_s, \quad s \in [t,T].
		\end{split}
	\end{equation}
	In particular, $\mD$ is of finite variation. 
	The facts that $\Delta D_s = \gamma_s \Delta X_s$, $s \in [t,T]$, and $d\mD_s = \nu_s \gamma_s dX_s$, $s \in [t,T]$, imply that 
	\begin{equation}\label{eq:costfunctionalpart001}
	\begin{split}
		\int_{[t,T]} \left( 2D_{s-} + \Delta X_s \gamma_s \right) dX_s 
		& = \int_{[t,T]} \left( 2D_{s-} + \Delta D_s \right) dX_s 
		= \int_{[t,T]} \left( 2D_{s-} + \Delta D_s \right) \gamma_s^{-1} \nu_s^{-1} d\mD_s \\
		& = \int_{[t,T]} \nu_s^{-2}\gamma_s^{-1} \left( 2\mD_{s-} + \Delta \mD_s \right) d\mD_s 
		= \int_{[t,T]} \aphi_s\,d(\mD^2_s),
	\end{split}
	\end{equation}
	where we denote $\aphi_s = \nu_s^{-2}\gamma_s^{-1}$, $s\in[t,T]$,
and, in the last equality, we use that $d(\mD^2_s)=(2\mD_{s-}+\Delta\mD_s)\,d\mD_s$, as $\mD$ has finite variation. 
Summing up, \eqref{eq:costfunctionalpart001} yields 
	\begin{equation*}
		\begin{split}
			\int_{[t,T]} D_{s-}dX_s + \frac12 \int_{[t,T]} \Delta X_s \gamma_s dX_s &=
			\frac12 \left(\mD_T^2 \aphi_T-\mD_{t-}^2 \aphi_t-\int_t^T \mD_s^2 d\aphi_s\right)
			\\
			& = \frac12 \left( \gamma_T^{-1} D_T^2 - \gamma_t^{-1} d^2 - \int_{t}^T D_s^2\nu_s^2 d\left(\nu_s^{-2} \gamma_s^{-1}\right) \right).
		\end{split}
	\end{equation*} 	
	
	In order to show \eqref{eq:deviationrewritten}, we first obtain from \eqref{eq:Dtildedyn} and integration by parts that 
	\begin{equation*}
		\begin{split}
			\nu_r D_r - d & = \nu_r\gamma_r X_r - \gamma_t x - \int_{[t,r]} X_s d(\nu_s\gamma_s) - \int_{[t,r]} d[\nu\gamma,X]_s , \quad r \in [t,T].
		\end{split}
	\end{equation*}
	This implies that 
	$D_r = \gamma_r X_r + \nu_r^{-1} ( d - \gamma_t x - \int_{t}^r X_s d(\nu_s\gamma_s) )$, $r\in[t,T]$.
\end{proof}

\begin{proof}[Proof of \Cref{propo:representationcostfct}]	
	We first consider the integrator $\nu^{-2}\gamma^{-1}$ on the right hand side of \eqref{eq:costfunctionalpart}. 
	It holds by integration by parts and 
	\eqref{eq:nuinvdyn} 
	that for all $s \in [t,T]$ 
	\begin{equation*}
		\begin{split}
			d(\nu_s^{-2}\gamma_s^{-1})
			& = \nu_s^{-1} d(\gamma_s^{-1}\nu_s^{-1}) + \gamma_s^{-1} \nu_s^{-1} d\nu_s^{-1}  + d[\nu^{-1},\gamma^{-1}\nu^{-1}]_s \\
			& = 2 \nu_s^{-1} \gamma_s^{-1} d\nu_s^{-1} + \nu_s^{-2}  d\gamma_s^{-1} + \nu_s^{-1}  d[\gamma^{-1},\nu^{-1}]_s + d[\nu^{-1},\gamma^{-1}\nu^{-1}]_s \\
			& = - 2 \nu_s^{-2} \gamma_s^{-1} dR_s  
			+ \nu_s^{-2} d\gamma_s^{-1} 
			- \nu_s^{-2} d[\gamma^{-1},R]_s  
			+ d[\nu^{-1},\gamma^{-1}\nu^{-1}]_s  .
		\end{split}		
	\end{equation*} 
	Note that for all $s \in [t,T]$ we have 
	\begin{equation*}
		\begin{split}
			d[\nu^{-1},\gamma^{-1}\nu^{-1}]_s 
			& = - \nu_s^{-1} d[R, \gamma^{-1}\nu^{-1}]_s 
			= - \nu_s^{-1} d\left[ R, \int_t^{\cdot} 	\gamma^{-1} d\nu^{-1} + \int_t^{\cdot} \nu^{-1} d\gamma^{-1} \right]_s \\
			& = - \nu_s^{-1} \gamma_s^{-1} d[R,\nu^{-1}]_s - \nu_s^{-2} d[R,\gamma^{-1}]_s 			
			= \nu_s^{-2} \gamma_s^{-1} d[R]_s 
			- \nu_s^{-2} 
			d[ R, \gamma^{-1}]_s.
		\end{split}
	\end{equation*}
	It hence follows for all $s \in [t,T]$ that 
	\begin{equation*}
		\begin{split}
			d(\nu_s^{-2}\gamma_s^{-1})
			& = - 2 \nu_s^{-2} \gamma_s^{-1} dR_s  
			+ \nu_s^{-2} d\gamma_s^{-1} 
			- 2 \nu_s^{-2} d[\gamma^{-1},R]_s  
			+ \nu_s^{-2} \gamma_s^{-1} d[R]_s .
		\end{split}
	\end{equation*}
	Plugged into \eqref{eq:costfunctionalpart} from \Cref{propo:costfunctionalpart}, we obtain that 
	\begin{equation}\label{eq:costfunctionalpartb}
		\begin{split}
			& \int_{[t,T]} D_{s-}dX_s + \frac12 \int_{[t,T]} \Delta X_s \gamma_s dX_s  \\
			& = \frac12 \left( \gamma_T^{-1} D_T^2 - \gamma_t^{-1} d^2 
			- \int_t^T D_s^2 \left( d\gamma_s^{-1} + \gamma_s^{-1}d[R]_s - 2\gamma_s^{-1} dR_s 
			- 2 d[\gamma^{-1},R]_s \right) \right) .
		\end{split}
	\end{equation}	
	We further have by \eqref{eq:resilience} and \eqref{eq:LOBheight} that for all $s \in [t,T]$
	\begin{equation}\label{eq:1207a2}
		\begin{split}
			& d\gamma_s^{-1} + \gamma_s^{-1} d[R]_s - 2 \gamma_s^{-1} dR_s - 2d[\gamma^{-1},R]_s \\
			& = - \gamma_s^{-1} (\mu_s - \sigma_s^2) ds - \gamma_s^{-1} \sigma_s dW^1_s + \gamma_s^{-1} \eta_s^2 ds - 2 \gamma_s^{-1} \rho_s ds 
			- 2 \gamma_s^{-1} \eta_s dW_s^R + 2 \gamma_s^{-1} \sigma_s \eta_s \rcor_s ds \\
			& = - \gamma_s^{-1} \left( 2\rho_s +\mu_s - \sigma_s^2 - \eta_s^2 - 2\sigma_s \eta_s \rcor_s \right) ds 
			- \gamma_s^{-1} \sigma_s dW^1_s - 2\gamma_s^{-1} \eta_s dW_s^R .
		\end{split}
	\end{equation}
	It follows from assumption (A1) and the boundedness of the input processes that 
	\begin{equation*}
		\begin{split}
			E\left[ \left\lvert \int_t^T D_s^2 \gamma_s^{-1} \left( 2\rho_s +\mu_s - \sigma_s^2 - \eta_s^2 - 2\sigma_s \eta_s \rcor_s \right) ds  \right\rvert \right] < \infty.
		\end{split}
	\end{equation*}
	The Burkholder-Davis-Gundy inequality together with assumption (A3) shows that it holds for some constant $c \in (0,\infty)$ that 
	\begin{equation*}
		\begin{split}
			E\left[ \sup_{r \in [t,T]} \left\lvert \int_t^r D_s^2 \gamma_s^{-1} \sigma_s dW^1_s \right\rvert \right] 
			& \leq c E\left[ \left( \int_t^T D_s^4 \gamma_s^{-2} \sigma_s^2 ds \right)^{\frac12} \right] 
			< \infty .
		\end{split}
	\end{equation*}
	We therefore have that 
	$E_t[ \int_t^T D_s^2 \gamma_s^{-1} \sigma_s dW^1_s ] = 0.$ 
	Similarly, assumption (A2) implies that 
	$E_t[ \int_t^T 2 D_s^2 \gamma_s^{-1} \eta_s dW_s^R ] = 0.$ 
	It thus follows from 
	\eqref{eq:costfunctionalpartb}, \eqref{eq:1207a2}, and \eqref{eq:defkappa} that 
	\begin{equation*}
		E_t\left[ \int_{[t,T]} D_{s-}dX_s + \frac12 \int_{[t,T]} \Delta X_s \gamma_s dX_s \right] 
		= \frac12 E_t\left[ \gamma_T^{-1} D_T^2 + \int_t^T D_s^2 \gamma_s^{-1} 2\kappa_s ds \right] 
		- \frac{d^2}{2\gamma_t} .
	\end{equation*}	
	By definition \eqref{eq:defcostfct1} of $\fvJ$ this proves \eqref{eq:costfctrewritten}. 
\end{proof}

The dynamics that we compute in the following lemma are used in the proofs of \Cref{lem:scaledhiddendevdyn} and \Cref{lem:getXfromu}.

\begin{lemma}\label{lem:dynalphabeta}
	Let $t \in [0,T]$, $x,d \in \R$. Assume that $X=(X_s)_{s\in[t-,T]}$ is a progressively measurable process such that $\int_t^T X_s^2 ds < \infty$ a.s. For $\alpha_s=\gamma_s^{-\frac12}\nu_s^{-1}$, $s \in [t,T]$, and $\beta_s=d-\gamma_t x -\int_t^s X_r d(\nu_r\gamma_r)$, $s \in [t,T]$, it then holds for all $s \in [t,T]$ that 
	\begin{equation}\label{eq:dynalphabeta}
		\begin{split}
			& d(\alpha_s\beta_s) \\
			& = 
			-\gamma_s^{\frac12} X_s 
			\Bigg( \big( \mu_s +\rho_s +\eta_s^2 + \sigma_s \eta_s \rcor_s \big) ds 
			+ \big( \sigma_s + \eta_s \rcor_s \big)dW_s^1 
			+ \eta_s \sqrt{1-\rcor_s^2}dW_s^2 \Bigg) \\
			& \quad + \alpha_s \beta_s  \Bigg(  
			\Big( - \rho_s - \frac12 \mu_s + \frac38 \sigma_s^2 + \frac12 \sigma_s \eta_s \rcor_s \Big) ds 
			+ \Big( -\eta_s \rcor_s - \frac12 \sigma_s \Big) dW_s^1
			- \eta_s \sqrt{1-\rcor_s^2} dW_s^2
			\Bigg) \\
			& \quad + \gamma_s^{\frac12} X_s \left( \frac32 \eta_s \sigma_s \rcor_s + \frac12 \sigma_s^2 + \eta_s^2 \right) ds .
		\end{split}
	\end{equation}
\end{lemma}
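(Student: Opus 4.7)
The plan is to apply the It\^o product rule to $\alpha_s\beta_s$ after first computing the dynamics of $\alpha_s=\gamma_s^{-\frac12}\nu_s^{-1}$ and of $\beta_s$ separately, and then collect terms. The main obstacle is not conceptual but purely computational bookkeeping: we have to track a number of cross-variation terms arising from the fact that $d\nu_s$ involves $dW^R_s=\rcor_s dW^1_s+\sqrt{1-\rcor_s^2}dW^2_s$ while $d\gamma_s$ involves only $dW^1_s$, and then further cross-variations between $\alpha$ and $\beta$ will appear in the product rule.

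First, I would compute $d\alpha_s$ by a second application of It\^o's product rule to $\gamma_s^{-\frac12}\cdot\nu_s^{-1}$, using \eqref{eq:sqrtalphadyn} for $d\gamma_s^{-\frac12}$ and \eqref{eq:nuinvdyn} together with \eqref{eq:resilience} for $d\nu_s^{-1}$. The cross-variation $d[\gamma^{-\frac12},\nu^{-1}]_s=\tfrac12\gamma_s^{-\frac12}\nu_s^{-1}\sigma_s\eta_s\rcor_s\,ds$ appears from the $dW^1_s$-coefficient of $\gamma^{-\frac12}$ against the $\rcor_s dW^1_s$-part of $dW^R_s$ in $\nu^{-1}$. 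Collecting terms yields a linear SDE for $\alpha$ of the form
\begin{equation*}
d\alpha_s=\alpha_s\Bigl[\bigl(-\rho_s-\tfrac12\mu_s+\tfrac38\sigma_s^2+\tfrac12\sigma_s\eta_s\rcor_s\bigr)ds+\bigl(-\tfrac12\sigma_s-\eta_s\rcor_s\bigr)dW_s^1-\eta_s\sqrt{1-\rcor_s^2}\,dW_s^2\Bigr].
\end{equation*}

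Next, $\beta$ is, by its definition, a stochastic integral, so I would compute $d\beta_s=-X_s\,d(\nu_s\gamma_s)$, and for $d(\nu_s\gamma_s)$ I apply the It\^o product rule again, using $d\gamma_s=\gamma_s(\mu_s ds+\sigma_s dW^1_s)$, $d\nu_s=\nu_s d(R_s+[R]_s)=\nu_s((\rho_s+\eta_s^2)ds+\eta_s dW^R_s)$, and the cross-variation $d[\nu,\gamma]_s=\nu_s\gamma_s\sigma_s\eta_s\rcor_s ds$. This produces
\begin{equation*}
d(\nu_s\gamma_s)=\nu_s\gamma_s\Bigl[(\mu_s+\rho_s+\eta_s^2+\sigma_s\eta_s\rcor_s)ds+(\sigma_s+\eta_s\rcor_s)dW_s^1+\eta_s\sqrt{1-\rcor_s^2}\,dW_s^2\Bigr].
\end{equation*}
The integrability assumption $\int_t^T X_s^2\,ds<\infty$ a.s.\ makes the resulting stochastic integrals well-defined.

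Finally, I would apply the It\^o product rule $d(\alpha_s\beta_s)=\alpha_s d\beta_s+\beta_s d\alpha_s+d[\alpha,\beta]_s$. Two simplifying observations then do most of the work: (a) $\alpha_s\nu_s\gamma_s=\gamma_s^{\frac12}$, which immediately turns $\alpha_s d\beta_s$ into the first bracket of \eqref{eq:dynalphabeta}; (b) $\beta_s d\alpha_s$ is, up to the factor $\alpha_s\beta_s$, exactly the SDE computed in the first step, matching the second bracket of \eqref{eq:dynalphabeta}. For the covariation I only pair the $dW^1$- and $dW^2$-parts of $d\alpha_s$ with those of $d\beta_s$; using $(-\tfrac12\sigma_s-\eta_s\rcor_s)(\sigma_s+\eta_s\rcor_s)+\bigl(-\eta_s\sqrt{1-\rcor_s^2}\bigr)\bigl(\eta_s\sqrt{1-\rcor_s^2}\bigr)=-\tfrac12\sigma_s^2-\tfrac32\sigma_s\eta_s\rcor_s-\eta_s^2$ and the identity $\alpha_s\nu_s\gamma_s=\gamma_s^{\frac12}$ produces the third line of \eqref{eq:dynalphabeta}. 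Summing the three contributions yields \eqref{eq:dynalphabeta}; the only care needed is to track signs through $(-X_s)\cdot(-\tfrac12\sigma_s-\eta_s\rcor_s)(\sigma_s+\eta_s\rcor_s)$ carefully, which is the one place it is easy to slip.
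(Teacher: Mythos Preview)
Your proposal is correct and follows essentially the same approach as the paper: both apply the It\^o product rule to $\alpha_s\beta_s$, compute $d(\nu_s\gamma_s)$ and $d(\gamma_s^{-\frac12}\nu_s^{-1})$ separately via further product rules, and then collect the covariation term. The intermediate formulas you obtain for $d\alpha_s$, $d(\nu_s\gamma_s)$, and the covariation coefficient match the paper's \eqref{eq:2008b}, \eqref{eq:2008a}, and \eqref{eq:2008c} exactly.
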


\begin{proof}
	Integration by parts implies that 
	\begin{equation}\label{eq:20080}
		d(\alpha_s\beta_s) = -\alpha_s X_s d(\nu_s\gamma_s) + \beta_s d(\gamma_s^{-\frac12} \nu_s^{-1}) - X_sd[\gamma^{-\frac12} \nu^{-1},\nu\gamma]_s , \quad s \in [t,T].
	\end{equation}
	Furthermore, it holds by integration by parts, \eqref{eq:defnu}, \eqref{eq:resilience} and \eqref{eq:priceimpact} that for all $s \in [t,T]$ 
	\begin{equation}\label{eq:2008a}
		\begin{split}
			d(\nu_s\gamma_s) 
			& = \nu_s d\gamma_s + \gamma_s \nu_s dR_s + \gamma_s \nu_s d[R]_s + \nu_s d[R,\gamma]_s \\
			& = \nu_s \gamma_s \mu_s ds + \nu_s \gamma_s \sigma_s dW^1_s + \nu_s \gamma_s \rho_s ds + \nu_s \gamma_s \eta_s \rcor_s dW_s^1 + \nu_s \gamma_s \eta_s \sqrt{1-\rcor_s^2} dW_s^2 \\
			& \quad + \nu_s \gamma_s \eta_s^2 ds + \nu_s \gamma_s \sigma_s \eta_s \rcor_s ds \\
			& = \nu_s \gamma_s \Bigg( \big( \mu_s +\rho_s +\eta_s^2 + \sigma_s \eta_s \rcor_s \big) ds 
			+ \big( \sigma_s + \eta_s \rcor_s \big)dW_s^1 
			+ \eta_s \sqrt{1-\rcor_s^2}dW_s^2 \Bigg) .
		\end{split}
	\end{equation}
	Also by integration by parts, and using \eqref{eq:nuinvdyn}, \eqref{eq:resilience} and \eqref{eq:sqrtalphadyn}, we obtain for all $s \in [t,T]$ that 
	\begin{equation}\label{eq:2008b}
		\begin{split}
			& d(\gamma_s^{-\frac12} \nu_s^{-1}) 
			= -\gamma_s^{-\frac12} \nu_s^{-1} dR_s + \nu_s^{-1} d\gamma_s^{-\frac12} - \nu_s^{-1} d[R,\gamma^{-\frac12}]_s \\
			& = - \gamma_s^{-\frac12} \nu_s^{-1} \rho_s ds - \gamma_s^{-\frac12} \nu_s^{-1} \eta_s \rcor_s dW_s^1 - \gamma_s^{-\frac12} \nu_s^{-1} \eta_s \sqrt{1-\rcor_s^2} dW_s^2 \\
			& \quad + \gamma_s^{-\frac12} \nu_s^{-1} \left( -\frac12\mu_s + \frac38 \sigma_s^2 \right) ds 
			- \frac12 \gamma_s^{-\frac12} \nu_s^{-1}\sigma_s dW_s^1 + \frac12 \gamma_s^{-\frac12} \nu_s^{-1} \sigma_s \eta_s \rcor_s ds \\
			& = \alpha_s \Bigg(  
			\Big( - \rho_s - \frac12 \mu_s + \frac38 \sigma_s^2 + \frac12 \sigma_s \eta_s \rcor_s \Big) ds 
			+ \Big( - \eta_s \rcor_s - \frac12 \sigma_s \Big) dW_s^1
			- \eta_s \sqrt{1-\rcor_s^2} dW_s^2
			\Bigg)  .
		\end{split}
	\end{equation}
	It follows from \eqref{eq:2008a} and \eqref{eq:2008b} for all $s \in [t,T]$ that 
	\begin{equation}\label{eq:2008c}
		\begin{split}
			d[\gamma^{-\frac12}\nu^{-1},\nu\gamma]_s 
			& = \gamma_s^{\frac12} \Big( - \eta_s \rcor_s - \frac12 \sigma_s \Big)\big( \sigma_s + \eta_s \rcor_s \big) ds 
			- \gamma_s^{\frac12} \eta_s^2 (1-\rcor_s^2) ds \\
			& = - \gamma_s^{\frac12} \left( \frac32 \eta_s \sigma_s \rcor_s + \frac12 \sigma_s^2 + \eta_s^2 \right) ds .
		\end{split}
	\end{equation} 
	We then plug \eqref{eq:2008a}, \eqref{eq:2008b} and \eqref{eq:2008c} into \eqref{eq:20080}, which yields \eqref{eq:dynalphabeta}.
\end{proof}

\begin{proof}[Proof of \Cref{lem:scaledhiddendevdyn}]
	We denote $\alpha_s=\gamma_s^{-\frac12} \nu_s^{-1}$, $s\in [t,T]$, and 
	$\beta_s = d - \gamma_t x - \int_t^s X_r d(\nu_r\gamma_r)$, $s \in [t,T]$. 
	It then holds that $\ssfi_s=\alpha_s\beta_s$, $s \in [t,T]$.
	We use \Cref{lem:dynalphabeta} and substitute $-\gamma^{\frac12} X = \ssfi - \gamma^{-\frac12} D$ in \eqref{eq:dynalphabeta} to obtain for all $s \in [t,T]$ that 
	\begin{equation*}
	\begin{split}
	& d\ssfi_s \\
	& = \big( \ssfi_s - \gamma_s^{-\frac12} D_s \big) \Bigg( 
	\left( \mu_s + \rho_s -\frac12 \sigma_s \eta_s \rcor_s - \frac12 \sigma_s^2 \right) 
	ds 
	+ \big( \sigma_s + \eta_s \rcor_s \big)dW_s^1 
	+ \eta_s \sqrt{1-\rcor_s^2}dW_s^2 \Bigg) \\
	& \quad + \ssfi_s \Bigg(  
	\Big( - \rho_s - \frac12 \mu_s + \frac38 \sigma_s^2 + \frac12 \sigma_s \eta_s \rcor_s \Big) ds 
	+ \Big( - \eta_s \rcor_s - \frac12 \sigma_s \Big) dW_s^1
	- \eta_s \sqrt{1-\rcor_s^2} dW_s^2
	\Bigg) \\
	& = 
	- \gamma_s^{-\frac12} D_s \Bigg( 
	\left( \mu_s + \rho_s -\frac12 \sigma_s \eta_s \rcor_s - \frac12 \sigma_s^2 \right) 
	ds 
	+ \big( \sigma_s + \eta_s \rcor_s \big)dW_s^1 
	+ \eta_s \sqrt{1-\rcor_s^2}dW_s^2 \Bigg) \\
	& \quad + 
	\ssfi_s \Bigg( \left( \frac12 \mu_s - \frac18 \sigma_s^2 \right) ds 
	+ \frac12 \sigma_s dW_s^1 \Bigg) . 
	\end{split} 
	\end{equation*}
	This proves the dynamics in~\eqref{eq:dynscaledhiddendev}. 
	
	In particular, $\ssfi$ satisfies an SDE that is linear in $\ssfi$ and $\gamma^{-\frac12}D$. Furthermore, boundedness of $\rho,\mu,\sigma,\eta,\rcor$ implies that the coefficients of the SDE are bounded. Since moreover $E[ \int_t^T \big(\gamma_s^{-\frac12} D_s\big)^2 ds ] <\infty$ by assumption~(A1) and $\ssfi_t=\gamma_t^{-\frac12} d - \gamma_t^{\frac12} x$ (cf.~\eqref{eq:scaledhiddendevdef}) is square integrable, we have that $E[ \sup_{s \in [t,T]} \ssfi_s^2 ]<\infty$  (see, e.g., \cite[Theorem~3.2.2 and Theorem~3.3.1]{zhang}).	
	
	We next prove that cost functional~\eqref{eq:defcostfctpm} admits representation~\eqref{eq:quadr_pmj}. 
	To this end, note that by \eqref{eq:scaledhiddendevdef} it holds for all $s \in [t,T]$ that 
	\begin{equation*}
		\gamma_s \!\left(X_s\! - \zeta_s \right)^2 
		 = 
		\left(\gamma^{-\frac12}_s D_s - \ssfi_s - \gamma_s^{\frac12} \zeta_s \right)^{\!2} \!
		= \gamma_s^{-1}D_s^2 
		- 2 \gamma_s^{-\frac12}D_s \left( \ssfi_s + \gamma_s^{\frac12} \zeta_s \right) + \left( \ssfi_s + \gamma_s^{\frac12} \zeta_s \right)^{\!2} \! . 
	\end{equation*}
	Due to assumption~\eqref{eq:conditionxi} on $\zeta$ and $E[\sup_{s\in[t,T]} \ssfi_s^2 ]<\infty$, we have that $E_t[\int_t^T (\ssfi_s + \gamma_s^{\frac12} \zeta_s )^2 ds]<\infty$. 
	This, assumption (A1), and the Cauchy--Schwarz inequality imply that also $E_t[\int_t^T \lvert \gamma_s^{-\frac12} D_s (\ssfi_s + \gamma_s^{\frac12} \zeta_s) \rvert ds]<\infty$. 
	Since $\lambda$ is bounded, we conclude that 
	\begin{equation}\label{eq:risktermrewritten}
		\begin{split}
			E_t\left[ \int_t^T \lambda_s \gamma_s \left(X_s - \zeta_s \right)^2  ds \right] 
			& = E_t\left[ \int_t^T \lambda_s \gamma_s^{-1}D_s^2 ds \right] 
			+ E_t\left[ \int_t^T \lambda_s \left( \ssfi_s + \gamma_s^{\frac12} \zeta_s \right)^2 ds \right] \\
			& \quad - 2 E_t\left[ \int_t^T \lambda_s \gamma_s^{-\frac12}D_s \left( \ssfi_s + \gamma_s^{\frac12} \zeta_s \right) ds \right] ,
		\end{split}
	\end{equation}
	where all conditional expectations are well-defined and finite. 
	Moreover, \eqref{eq:scaledhiddendevdef} implies that 
	$\gamma_T^{-\frac12} D_T = \ssfi_T + \gamma_T^{\frac12} X_T$, and thus 
	$\gamma_T^{-1}D_T^2=(\ssfi_T + \sqrt{\gamma_T}\xi)^2$. 
	Inserting this and~\eqref{eq:risktermrewritten} into~\eqref{eq:defcostfctpm}, we obtain~\eqref{eq:quadr_pmj}.
\end{proof}

\begin{lemma}\label{lem:ismetric}
	Let $t \in [0,T]$ and $x,d \in \R$. Then, \eqref{eq:defmetriconpm} defines a metric on $\cA_t^{pm}(x,d)$ (identifying any processes that are equal $dP\times ds|_{[t,T]}$-a.e.).
\end{lemma}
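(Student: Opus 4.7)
The plan is to verify the four metric axioms after identification of processes that agree $dP\times ds|_{[t,T]}$-a.e. Nonnegativity and symmetry are immediate from~\eqref{eq:defmetriconpm}. For the triangle inequality, observe that $\md(X,Y)=\lVert \gamma^{-\frac12}(D^X-D^Y)\rVert_{L^2(dP\times ds|_{[t,T]})}$. For fixed $(x,d)$, the map $X\mapsto D^X$ in~\eqref{eq:defdeviationpm} is affine with the $(d,\gamma_t x)$ boundary terms cancelling in differences, so $D^X-D^Y$ depends linearly on $X-Y$. Thus $\md(X,Z)\le \md(X,Y)+\md(Y,Z)$ follows from Minkowski's inequality in that weighted $L^2$ space.

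The nontrivial step is definiteness: if $\md(X,Y)=0$ then $X\sim Y$ in the sense of~\eqref{eq:250322a1}. Since $\gamma>0$ a.s., $\md(X,Y)=0$ forces $D^X=D^Y$ $dP\times ds|_{[t,T]}$-a.e. Subtracting~\eqref{eq:defdeviationpm} for $X$ and for $Y$ (using $X_{t-}=Y_{t-}=x$ so that the constants $d-\gamma_t x$ cancel) and writing $U:=X-Y$ gives
\begin{equation*}
\gamma_s U_s \;=\; \nu_s^{-1}\int_t^s U_r\,d(\nu_r\gamma_r)\qquad dP\times ds|_{[t,T]}\text{-a.e.}
\end{equation*}
The right-hand side, viewed as a process in $s$, is almost surely continuous in $s$ and vanishes at $s=t$; call it $\tilde V_s$. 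Then $\gamma_s U_s=\tilde V_s$ a.e., and since the stochastic integral $\int_t^s U_r\,d(\nu_r\gamma_r)$ is insensitive to modifications of $U$ on $dP\times ds$-null sets, we may substitute $U_r=\gamma_r^{-1}\tilde V_r$ to obtain the linear equation
\begin{equation*}
\tilde V_s \;=\; \nu_s^{-1}\int_t^s \gamma_r^{-1}\tilde V_r\,d(\nu_r\gamma_r).
\end{equation*}

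Setting $W_s:=\nu_s\tilde V_s$ and using that $\nu,\gamma$ are continuous and strictly positive, $W$ satisfies $W_t=0$ and the linear SDE $dW_s=W_s\,\nu_s^{-1}\gamma_s^{-1}\,d(\nu_s\gamma_s)$. Since $\nu\gamma$ is a continuous semimartingale, this linear equation has a unique solution given by $W_s=W_t\cdot\mathcal E(\int_t^\cdot \nu_r^{-1}\gamma_r^{-1}\,d(\nu_r\gamma_r))_s\equiv 0$ (this can be verified directly by computing the differential of $W\cdot\mathcal E(\cdot)^{-1}$ via integration by parts). Hence $\tilde V\equiv 0$, so $U_s=0$ $dP\times ds|_{[t,T]}$-a.e.; together with $U_{t-}=0$ and $U_T=\xi-\xi=0$ this is exactly $X\sim Y$.

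The main obstacle is the definiteness argument: one must pass from the $dP\times ds$-a.e.\ identity $D^X=D^Y$ to the existence of a genuinely pathwise-continuous version $\tilde V$ of $\gamma U$ in order to legitimately substitute it back into the stochastic integral, and then invoke uniqueness for the resulting linear SDE. Integrability issues for this substitution are harmless because the class $\cA_t^{pm}(x,d)$ already guarantees $\int_t^T U_s^2\,ds<\infty$ a.s., which in turn yields $\int_t^T \gamma_r^{-2}\tilde V_r^2\,ds<\infty$ a.s., enough to interpret the integral against the continuous semimartingale $\nu\gamma$.
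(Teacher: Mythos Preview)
Your proof is correct and follows essentially the same route as the paper: both reduce the definiteness step to the observation that $W_s:=\nu_s\gamma_s(X_s-Y_s)$ (up to the passage to a continuous version, which you make explicit and the paper handles implicitly) satisfies the homogeneous linear equation $dW_s=W_s\,(\nu_s\gamma_s)^{-1}\,d(\nu_s\gamma_s)$ with $W_t=0$, hence $W\equiv0$. The only cosmetic difference is that the paper rewrites the driver via~\eqref{eq:2008a} as $dK=K\,dL$ with bounded coefficients before invoking uniqueness, whereas you appeal directly to the Dol\'eans--Dade exponential.
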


\begin{proof}	
	Note first that it holds for all $X,Y \in \cA_t^{pm}(x,d)$ that $\md(X,Y)\geq 0$, and that $\md(X,Y)$ is finite due to (A1). 
	Symmetry of $\md$ is obvious. 
	The triangle inequality follows from the Cauchy--Schwarz inequality. 

	Let $X,Y \in \cA_t^{pm}(x,d)$ with associated deviation processes $D^X, D^Y$.
	
	If $X=Y$ $dP\times ds|_{[t,T]}$-a.e., then $\gamma^{-\frac12}D^X=\gamma^{-\frac12}D^Y$ $dP\times ds|_{[t,T]}$-a.e., and thus $\md(X,Y)=(E[\int_t^T (\gamma_s^{-\frac12}D_s^X - \gamma_s^{-\frac12}D_s^Y)^2 ds ])^{\frac12} = 0$. 
	
	For the other direction, suppose that $\md(X,Y)=0$. 
	This implies that $\gamma^{-\frac12} D^X - \gamma^{-\frac12} D^Y = 0$ $dP\times ds|_{[t,T]}$-a.e. 
	By definition of $D^X$ and $D^Y$ it further follows from a multiplication by $\nu\gamma^{\frac12}$ that 
	$\nu_s \gamma_s (X_s-Y_s) = \int_t^{s} (X_r - Y_r) d(\nu_r \gamma_r)$ $\,dP\times ds|_{[t,T]}\text{-a.e.}$ 
	Observe that $\nu\gamma>0$ and consider the stochastic integral equation 
	\begin{equation}\label{eq:3008a}
		K_s=\int_t^s K_r\nu_r^{-1}  \gamma_r^{-1} d(\nu_r\gamma_r), \quad s\in[t,T].
	\end{equation}
	Define $L=(L_s)_{s \in [0,T]}$ by $L_0=0$, 
	\begin{equation*}
		dL_s = \big( \mu_s +\rho_s +\eta_s^2 + \sigma_s \eta_s \rcor_s \big) ds 
		+ \big( \sigma_s + \eta_s \rcor_s \big)dW_s^1 
		+ \eta_s \sqrt{1-\rcor_s^2}dW_s^2, 
		\quad s \in [0,T]. 
	\end{equation*}
	It then follows from \eqref{eq:2008a} that \eqref{eq:3008a} can be written as 
	$K_s=\int_t^s K_r dL_r$, $s\in[t,T]$. 
	This has the unique solution $K=0$. 
	We therefore conclude that $X=Y$ $dP\times ds|_{[t,T]}$-a.e.
\end{proof}

We now prepare the proof of \Cref{thm:contextcostfct}. 
The next result on the scaled hidden deviation is helpful in \Cref{thm:contextcostfct} 
in order to show convergence of the cost functional.

\begin{lemma}\label{lem:scaledhiddendevconv}
		Let $t \in [0,T]$, $x,d \in \R$, and $X \in \cA^{pm}_t(x,d)$ with associated deviation~$D$ and scaled hidden deviation $\ssfi$. 
		Suppose in addition that $(X^n)_{n \in \N}$ is a sequence in $\cA^{pm}_t(x,d)$ such that 
		$\lim_{n\to\infty} E[ \int_t^T (D_s^n - D_s)^2 \gamma_s^{-1} ds ] = 0$. 
		for the associated deviation processes $D^n$, $n \in \N$. It then holds for the associated scaled hidden deviation processes $\ssfi^n$, $n\in\N$, that 
		$\lim_{n\to\infty} E[ \sup_{s \in [t,T]} ( \ssfi_s^n - \ssfi_s )^2 ] = 0.$
\end{lemma}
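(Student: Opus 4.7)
The plan is to exploit the linear SDE~\eqref{eq:dynscaledhiddendev} from \Cref{lem:scaledhiddendevdyn}. Both $\ssfi^n$ and $\ssfi$ share the same deterministic initial condition $\frac{d}{\sqrt{\gamma_t}} - \sqrt{\gamma_t}x$, and the drift/diffusion of each is affine in the state and in the respective input process $\gamma^{-\frac12}D^{X^n}$, $\gamma^{-\frac12}D^X$. Hence the difference $\delta^n := \ssfi^n - \ssfi$ satisfies an SDE of the form
\begin{equation*}
	d\delta^n_s = \big(a_s\delta^n_s + b_s\Delta^n_s\big)\,ds + \big(c_s\delta^n_s + e_s\Delta^n_s\big)\,dW^1_s + f_s\Delta^n_s\,dW^2_s, \quad \delta^n_t = 0,
\end{equation*}
where $\Delta^n_s := \gamma_s^{-\frac12}(D^{X^n}_s - D^X_s)$ and the coefficients $a,b,c,e,f$ are read off from \eqref{eq:dynscaledhiddendev}. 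Since $\mu,\sigma,\rho,\eta$ are bounded and $\rcor$ is $[-1,1]$-valued, $a,b,c,e,f$ are all bounded by some constant $C\in(0,\infty)$.

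Next, I would apply It\^o's isometry and the Burkholder--Davis--Gundy (BDG) inequality to the martingale parts and Cauchy--Schwarz to the drift part. Concretely, for every $r\in[t,T]$,
\begin{equation*}
	E\!\left[\sup_{u\in[t,r]}(\delta^n_u)^2\right]
	\le C'\left(\int_t^r E\!\left[\sup_{u\in[t,s]}(\delta^n_u)^2\right]ds + E\!\left[\int_t^T (\Delta^n_s)^2\,ds\right]\right),
\end{equation*}
with a universal constant $C'\in(0,\infty)$ depending only on $C$ and $T$. The hypothesis of the lemma is precisely $E[\int_t^T(\Delta^n_s)^2\,ds]\to 0$. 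To rigorously close the Gronwall step, I would first localize by a stopping time $\tau_k = \inf\{s\in[t,T]:|\delta^n_s|\ge k\}\wedge T$ so that $E[\sup_{u\in[t,\cdot\wedge\tau_k]}(\delta^n_u)^2]$ is a priori finite (and then send $k\to\infty$), using that $E[\sup_{s\in[t,T]}(\ssfi^{X^n}_s)^2]$ and $E[\sup_{s\in[t,T]}(\ssfi^X_s)^2]$ are finite by \Cref{lem:scaledhiddendevdyn}, which ensures $E[\sup_{u\in[t,T]}(\delta^n_u)^2]<\infty$ directly, so in fact no localization beyond a standard argument is needed.

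Applying Gronwall's lemma to the function $r\mapsto E[\sup_{u\in[t,r]}(\delta^n_u)^2]$ then yields
\begin{equation*}
	E\!\left[\sup_{s\in[t,T]}(\ssfi^n_s - \ssfi_s)^2\right]
	\le C'e^{C'T}\,E\!\left[\int_t^T (D^{X^n}_s - D^X_s)^2\gamma_s^{-1}\,ds\right]\xrightarrow{n\to\infty} 0,
\end{equation*}
which is the claim. The argument is essentially routine continuous-dependence analysis for linear SDEs with bounded coefficients; no step should present a real obstacle, the only mild care needed being the bookkeeping of the constants (and the justification that the supremum moments of $\delta^n$ are a priori finite, which follows from $E[\sup_{s}(\ssfi^{X^n}_s)^2]<\infty$ and $E[\sup_s(\ssfi^X_s)^2]<\infty$ by \Cref{lem:scaledhiddendevdyn}).
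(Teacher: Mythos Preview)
Your proposal is correct and follows essentially the same route as the paper: write the difference $\delta^n=\ssfi^n-\ssfi$ as the solution of a linear SDE with bounded coefficients and inhomogeneity $\Delta^n=\gamma^{-1/2}(D^n-D)$, then use a standard $L^2$ moment estimate to bound $E[\sup_s(\delta^n_s)^2]$ by a constant times $\|\Delta^n\|_{\cL_t^2}^2$. The only cosmetic difference is that the paper invokes a ready-made estimate (Theorem~3.2.2 in Zhang's book) in place of spelling out the BDG\,+\,Gronwall argument you sketch.
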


\begin{proof}
	Define $\delta \ssfi^n = \ssfi^n - \ssfi$, $n \in\N$, and let for $n\in\N$, $s \in [t,T]$, $z\in\R$ 
	\begin{equation*}
		\begin{split}
			b^n_s(z) & = -\frac12 \Big( 2(\rho_s+\mu_s) - \sigma_s^2 - \sigma_s\eta_s\rcor_s \Big) \big(\gamma^{-\frac12}_s D^n_s - \gamma^{-\frac12}_s D_s \big)  
			+ \frac12 \left( \mu_s - \frac14 \sigma_s^2 \right) z ,\\
			a_s^n(z) & = 
			\bigg( - (\sigma_s + \eta_s \rcor_s) \big(\gamma^{-\frac12}_s D^n_s - \gamma^{-\frac12}_s D_s \big) + \frac12 \sigma_s z, - \eta_s \sqrt{1-\rcor_s^2} \big(\gamma^{-\frac12}_s D^n_s - \gamma^{-\frac12}_s D_s \big) \bigg).
		\end{split}
	\end{equation*}
	In view of \eqref{eq:dynscaledhiddendev} it then holds for all $n\in\N$ that 
	\begin{equation*}
		d(\delta \ssfi^n_s) = b_s^n(\delta\ssfi_s^n) ds + a_s^n(\delta \ssfi_s^n) d
		\begin{pmatrix}
			W_s^1\\
			W_s^2 
		\end{pmatrix}, 
		\quad s \in [t,T], \quad \delta\ssfi_t^n=0.
	\end{equation*}
	Linearity of $b^n$, $a^n$, $n\in\N$, and boundedness of $\mu,\rho,\sigma,\eta,\rcor$ imply that there exists $c_1 \in (0,\infty)$ such that 
	for all $n\in\N$ and all $z_1,z_2 \in \R$ it holds $dP\times ds|_{[t,T]}$-a.e.\ that
	\begin{equation*}
		\lvert b^n(z_1) - b^n(z_2) \rvert + \lVert a^n(z_1) - a^n(z_2) \rVert_2 \leq \frac12 \left\lvert \mu - \frac14 \sigma^2 \right\rvert \lvert z_1-z_2 \rvert + \frac12 \lvert \sigma\rvert \lvert z_1-z_2\rvert 
		\leq c_1 \lvert z_1-z_2\rvert .
	\end{equation*} 
	By boundedness of $\mu,\rho,\sigma,\eta,\rcor$ and Jensen's inequality, we have some $c_2 \in (0,\infty)$ such that for all $n \in \N$,
	\begin{equation*}
		E\left[ \left( \int_t^T \lvert b_s^n(0) \rvert ds \right)^2 \right] 
		+ E\left[ \int_t^T \lVert a^n_s(0) \rVert_2^2 ds \right] \leq c_2 E\left[ \int_t^T (D_s^n-D_s)^2 \gamma_s^{-1} ds \right].
	\end{equation*}
	E.g., \cite[Theorem~3.2.2]{zhang} (see also \cite[Theorem~3.4.2]{zhang}) now implies that 
	there exists $c_3 \in (0,\infty)$ such that for all $n\in\N$ 
	\begin{equation*}
		\begin{split}
			E\left[ \sup_{s\in[t,T]} \lvert \ssfi_s^n - \ssfi_s \rvert^2 \right] 
			& \leq c_3 E\left[ \left( \int_t^T \lvert b_s^n(0) \rvert ds \right)^2 + \int_t^T \lVert a^n_s(0) \rVert_2^2 ds \right] \\
			& \leq c_2 c_3  E\left[ \int_t^T (D_s^n-D_s)^2 \gamma_s^{-1} ds \right] .
		\end{split}
	\end{equation*} 
	The claim follows from the assumption that $\lim_{n\to\infty} E[ \int_t^T \left( D_s^n - D_s \right)^2 \gamma_s^{-1} ds ] = 0$. 
\end{proof}

In order to establish existence of an appropriate approximating sequence in \Cref{thm:contextcostfct}, we rely on \Cref{lem:approxargumentKS} below. 
For its statement and the proof of the second part of  \Cref{thm:contextcostfct}, 
we introduce a process $Z=(Z_s)_{s \in [0,T]}$ defined by 
\begin{equation}\label{eq:defZ}
	Z_s = \exp\left( -\int_0^s \left(\frac12 \sigma_r + \eta_r \rcor_r \right) dW^1_r - \int_0^s \eta_r \sqrt{1-\rcor_r^2} dW_r^2 \right), \quad s\in[0,T].
\end{equation}
Observe that by  It\^o's lemma, $Z$ solves the SDE 
\begin{equation}\label{eq:SDEforZ}
	\begin{split}
		& dZ_s =  \frac{Z_s}2 \left( \left( \frac12 \sigma_s + \eta_s \rcor_s \right)^2 + \eta_s^2 (1-\rcor_s^2) \right) ds 
		- Z_s \left( \frac12 \sigma_s + \eta_s \rcor_s, \eta_s \sqrt{1-\rcor_s^2} \right) d \begin{pmatrix}
			W^1_s\\
			W_s^2 
		\end{pmatrix}, \\
		& s \in [0,T], \quad Z_0=1.
	\end{split}
\end{equation}

\begin{lemma}\label{lem:approxargumentKS}
	Let $t \in [0,T]$ and let 
	$u=(u_s)_{s \in [t,T]} \in \cL_t^2$. 
	Then there exists a sequence of bounded c\`adl\`ag finite variation processes $(v^n)_{n\in\N}$ such that 
	\begin{equation*}
		\lim_{n\to\infty} E\left[ \int_t^T \left( \frac{u_s}{Z_s} - v_s^n \right)^2 Z_s^2 ds \right] = 0.
	\end{equation*}
	In particular, for the sequence of processes $(u^n)_{n\in\N}$ defined by $u^n = v^n Z$, $n \in \N$, it holds for all $n \in \N$ that $u^n$ is a c\`adl\`ag semimartingale and $E[\sup_{s\in[t,T]} \lvert u_s^{n} \rvert^p]<\infty$ for any $p\geq 2$ (in particular, $u^n \in \cL_t^2$), and that 
	$	\lim_{n\to\infty} E[ \int_t^T \left( u_s - u_s^n \right)^2 ds ] = 0.$
\end{lemma}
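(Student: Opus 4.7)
The approach is to work with the reciprocal $v_s := u_s / Z_s$, which is progressively measurable because $Z$ is continuous, adapted and strictly positive. Since $E[\int_t^T v_s^2 Z_s^2 \, ds] = \|u\|_{\cL_t^2}^2 < \infty$, the first assertion is equivalent to approximating $v$ in the weighted $L^2$-space $L^2(\Omega \times [t,T],\, dP \otimes Z^2 \, ds)$ by bounded c\`adl\`ag adapted processes of finite variation. The ``in particular'' part then follows with little additional work: $u^n := v^n Z$ is the product of a bounded continuous adapted finite variation process with a continuous semimartingale, hence itself a continuous semimartingale; the isometric identity
\[
E\Big[\int_t^T (u_s - u_s^n)^2 \, ds\Big] = E\Big[\int_t^T \Big(\tfrac{u_s}{Z_s} - v_s^n\Big)^2 Z_s^2 \, ds\Big]
\]
transports the weighted $L^2$-convergence to $\cL_t^2$-convergence; and the $L^p$-integrability follows from $|u^n| \le \|v^n\|_\infty \cdot \sup_s Z_s$ once we know $\sup_s Z_s \in L^p$ for all $p$.

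The approximation itself is performed in two standard steps. First, I truncate: $v^{(N)}_s := (v_s \wedge N) \vee (-N)$ is bounded and progressively measurable, and the pointwise bound $(v - v^{(N)})^2 Z^2 \le v^2 Z^2 = u^2$ combined with dominated convergence yields $E[\int_t^T (v - v^{(N)})^2 Z^2 \, ds] \to 0$ as $N \to \infty$. Second, I mollify: for each fixed $N$, set $v^{(N,k)}_s := k \int_{(s - 1/k) \vee t}^s v^{(N)}_r \, dr$. This process is adapted (by progressivity of $v^{(N)}$), uniformly bounded by $N$, and Lipschitz in $s$, hence of finite variation. Pathwise Lebesgue differentiation gives $v^{(N,k)} \to v^{(N)}$ pointwise $dP \otimes ds$-almost everywhere, and the uniform bound $(v^{(N,k)} - v^{(N)})^2 Z^2 \le 4N^2 Z^2$ together with $E[\int_t^T Z^2 \, ds] < \infty$ (see below) permits a second application of dominated convergence, giving $E[\int_t^T (v^{(N,k)} - v^{(N)})^2 Z^2 \, ds] \to 0$ as $k \to \infty$ for each fixed $N$. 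A diagonal extraction then produces the required sequence $v^n$.

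What remains is to verify the $L^p$-integrability of $\sup_{s \in [0,T]} Z_s$ for every $p \geq 1$. Writing $Z_s = e^{M_s}$ with $M$ the continuous local martingale appearing in the exponent in \eqref{eq:defZ}, the boundedness of $\sigma$, $\eta$ and $\rcor$ forces $[M]_T$ to be bounded by a deterministic constant $C$. Hence $\mathcal{E}(pM)$ is a true martingale for every $p \in \R$ by Novikov's criterion, and the identity $Z_s^p = \mathcal{E}(pM)_s \exp(\tfrac{p^2}{2}[M]_s)$ combined with Doob's $L^q$-inequality applied to $\mathcal{E}(pM)$ (for some $q > 1$) yields $E[\sup_{s \in [0,T]} Z_s^p] < \infty$. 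This delivers both $E[\int_t^T Z^2 \, ds] < \infty$ used above and the bound $E[\sup_s |u^n_s|^p] \le \|v^n\|_\infty^p\, E[\sup_s Z_s^p] < \infty$ for every $p \geq 2$. The entire argument is routine; the only minor subtlety is this exponential moment estimate for $Z$.
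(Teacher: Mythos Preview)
Your proof is correct and follows the same overall strategy as the paper's---reduce to approximating $v = u/Z$ in the $Z^2\,ds$-weighted $L^2$-space, establish $E[\sup_s Z_s^p] < \infty$, and read off the properties of $u^n = v^n Z$---but the two key steps are carried out differently.

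For the approximation, the paper casts $Z_s^2\,ds$ as $dA_s$ for the increasing process $A_s = \int_0^s Z_r^2\,dr$ and invokes Lemma~2.7 in Section~3.2 of Karatzas--Shreve to obtain simple (c\`agl\`ad) approximating processes $\hat v^n$, then passes to right limits to produce bounded c\`adl\`ag step processes $v^n$. For the moment bound on $\sup_s Z_s$, it simply observes that $Z$ solves an SDE with bounded coefficients and cites a standard SDE moment estimate. Your truncate-then-mollify construction and your explicit Novikov--Doob argument are instead fully self-contained; as a bonus the approximants $v^{(N,k)}$ you build are even continuous rather than merely c\`adl\`ag. The paper's route is shorter because it outsources both steps to external references, while yours makes transparent precisely where the boundedness of $\sigma$, $\eta$, $\rcor$ enters.
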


\begin{proof}
	Define $A=(A_s)_{s\in[0,T]}$ by $A_s=\int_0^s Z_r^2 dr$, $s\in [0,T]$. 
	Moreover, let $v=(v_s)_{s \in [t,T]}$ be defined by  $v_s=\frac{u_s}{Z_s}$, $s \in [t,T]$. 
	We verify the assumptions of Lemma~2.7 in Section~3.2 of \cite{karatzasshreve}. 
	The process $A$ is continuous, adapted and nondecreasing. 
	Note that boundedness of $\sigma$, $\eta$ and $\rcor$ implies that the coefficients of \eqref{eq:SDEforZ} are bounded. 
	It follows for any $p\geq 2$ that $E[\sup_{s\in[0,T]} \lvert Z_s \rvert^p]<\infty$ (see, e.g., \cite[Theorem~3.4.3]{zhang}), and hence 
	$E[A_T]=E[\int_0^T Z_r^2 dr]<\infty$. 
	Since $u \in \cL_t^2$, we have that $v$ is progressively measurable and satisfies $E[\int_t^T v_s^2 dA_s]=E[\int_t^T u_s^2 ds]< \infty$. 
	Thus, Lemma 2.7 in Section 3.2 of \cite{karatzasshreve} applies and yields that there exists a sequence $(\hat v^n)_{n\in\N}$ of (c\`agl\`ad) simple processes $\hat v^n=(\hat v_s^n)_{s \in [t,T]}$, $n \in\N$, such that $\lim_{n\to\infty} E[\int_t^T (v_s - \hat v_s^n)^2 dA_s] = 0$. 
	Define $v_s^n(\omega)=\lim_{r \downarrow s} \hat v_r^n(\omega)$, $s \in [t,T)$, $\omega \in \Omega$, $n\in\N$, and $v_T^n=0$, $n\in\N$. 
	Then, $(v^n)_{n\in\N}$ is a sequence of bounded c\`adl\`ag finite variation processes such that $\lim_{n\to\infty}E[\int_t^T (v_s - v_s^n)^2 dA_s] = 0$. 
	Note that for each $n\in\N$, $u^{n}=(u_s^{n})_{s \in [t,T]}$ defined by $u^{n}_s=v^{n}_s Z_s$, $s\in[t,T]$, is 
	c\`adl\`ag. 
	Since $v^{n}$ is bounded for all $n \in \N$ and $E[\sup_{s\in[0,T]} \lvert Z_s \rvert^p]<\infty$ for any $p\geq 2$, we have that $E[ \sup_{s \in [t,T]} \lvert u_s^{n} \rvert^p ]$ is finite for all $n \in \N$ and any $p\geq 2$. 
	It furthermore holds that 
	$E[\int_t^T (u_s - u_s^{n} )^2 ds] = E[\int_t^T (v_s - v_s^{n})^2 dA_s]\to 0$ as $n \to \infty$. 
\end{proof}

For the part in \Cref{thm:contextcostfct} on completeness of $(\cA_t^{pm}(x,d),\md)$ we show how to construct an execution strategy $X^0 \in \cA_t^{pm}(x,d)$ based on a square integrable process $u^0$ and a process $H^0$ that satisfies SDE \eqref{eq:dynscaledhiddendev} (with $u^0$ instead of $\gamma^{-\frac12}D$). 
This result is also crucial for \Cref{propo:givenugetX}.

\begin{lemma}\label{lem:getXfromu}
	Let $t \in [0,T]$ and $x,d \in \R$. 
	Suppose that $u^0=(u^0_s)_{s\in[t,T]} \in \cL_t^2$, and  
	let $H^0=(H^0_s)_{s\in[t,T]}$ be given by $H^0_t=\frac{d}{\sqrt{\gamma_t}} - \sqrt{\gamma_t}x$, 
	\begin{equation}\label{eq:SDEH0}
		\begin{split}
			dH^0_s & = \left(\frac12 \left( \mu_s - \frac14 \sigma_s^2 \right) H^0_s -\frac12 \left( 2(\rho_s+\mu_s) - \sigma_s^2 - \sigma_s\eta_s\rcor_s \right) u^0_s \right)ds\\
			&\quad +\left(\frac12 \sigma_s H^0_s - (\sigma_s + \eta_s \rcor_s) u^0_s\right) dW^1_s
			- \eta_s \sqrt{1-\rcor_s^2} u^0_s dW_s^2, \quad s\in [t,T].
		\end{split}
	\end{equation}
	Define $X^0=(X^0_s)_{s\in[t-,T]}$ by 
	$X^0_s=\gamma_s^{-\frac12}(u^0_s-H^0_s)$, $s \in[t,T)$, $X^0_{t-}=x$, $X^0_T=\xi$. 
	Then, $X^0 \in \cA_t^{pm}(x,d)$, and for the associated deviation process $D^0=(D^0_s)_{s \in [t-,T]}$ it holds $D^0=\gamma X^0 + \gamma^{\frac12}H^0$.
\end{lemma}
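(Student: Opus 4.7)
The plan is to first establish the deviation identity $D^0_s=\gamma_s X^0_s+\gamma_s^{1/2}H^0_s$, from which (A1) will follow immediately by substitution. I check the easy properties first: $X^0$ is progressively measurable on $[t,T)$ because $u^0$, $\gamma^{-1/2}$, and the pathwise continuous solution $H^0$ of the linear SDE~\eqref{eq:SDEH0} are all progressively measurable (the bounded coefficients and $\cL^2_t$-inhomogeneity of \eqref{eq:SDEH0} guarantee the required regularity of $H^0$), and extending by $X^0_T=\xi$ preserves progressive measurability on $[t,T]$. The boundary conditions $X^0_{t-}=x$ and $X^0_T=\xi$ hold by definition, and the pathwise bound $\int_t^T (X^0_s)^2\,ds<\infty$ a.s.\ follows from pathwise boundedness of $\gamma^{-1/2}$ and $H^0$ on $[t,T]$ together with $\int_t^T(u^0_s)^2\,ds<\infty$ a.s.

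The crux is proving that $G_s:=\nu_s\gamma_s^{1/2}H^0_s$ satisfies $dG_s=-X^0_s\,d(\nu_s\gamma_s)$. I compute $d(\nu_s\gamma_s^{1/2})$ by integration by parts from \eqref{eq:defnu} and \eqref{eq:sqrtgammadyn}, then expand $dG_s$ via another product rule, substituting \eqref{eq:SDEH0} for $dH^0_s$. Collecting terms, the central algebraic observation is that all three coefficients (drift, $dW^1$, and $dW^2$) factor as $\nu_s\gamma_s^{1/2}(H^0_s-u^0_s)$ times precisely the bracket $(\mu_s+\rho_s+\eta_s^2+\sigma_s\eta_s\rcor_s)\,ds+(\sigma_s+\eta_s\rcor_s)\,dW^1_s+\eta_s\sqrt{1-\rcor_s^2}\,dW^2_s$ that appears in the dynamics of $\nu\gamma$ (see~\eqref{eq:2008a} in the proof of \Cref{lem:dynalphabeta}). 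Since $-X^0_s=\gamma_s^{-1/2}(H^0_s-u^0_s)$, this yields the claimed identity. This cancellation---where the It\^o cross-variation term and the $H^0$-drift exactly compensate the discrepancy between $\nu\gamma^{1/2}$ and $\nu\gamma$---is the only genuinely delicate step; everything else is bookkeeping.

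With the identity in hand, integration from $t$ to $s$, using $G_t=\nu_t\gamma_t^{1/2}H^0_t=d-\gamma_t x$, gives $\gamma_s^{1/2}H^0_s=\nu_s^{-1}\bigl(d-\gamma_t x-\int_t^s X^0_r\,d(\nu_r\gamma_r)\bigr)$. Inserting this into the defining formula~\eqref{eq:defdeviationpm} proves $D^0_s=\gamma_s X^0_s+\gamma_s^{1/2}H^0_s$, and using $X^0_s=\gamma_s^{-1/2}(u^0_s-H^0_s)$ reduces this further to $D^0_s=\gamma_s^{1/2}u^0_s$. Condition (A1) is then immediate, since $E[\int_t^T\gamma_s^{-1}(D^0_s)^2\,ds]=E[\int_t^T(u^0_s)^2\,ds]<\infty$ because $u^0\in\cL^2_t$. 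Hence $X^0\in\cA^{pm}_t(x,d)$ with the announced deviation process.
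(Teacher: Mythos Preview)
Your proof is correct. Both your argument and the paper's reduce to showing that $\gamma^{1/2}H^0$ coincides with the hidden deviation $\nu^{-1}\bigl(d-\gamma_t x-\int_t^{\cdot}X^0_r\,d(\nu_r\gamma_r)\bigr)$, but the routes differ. The paper works with $\alpha_s\beta_s=\gamma_s^{-1/2}\nu_s^{-1}\beta_s$, invokes \Cref{lem:dynalphabeta} to obtain its dynamics, subtracts $dH^0_s$, and observes that the difference $\alpha\beta-H^0$ solves a homogeneous linear SDE with zero initial value, concluding via uniqueness. You instead compute the dynamics of $G_s=\nu_s\gamma_s^{1/2}H^0_s$ directly, verify that all three coefficients factor as $\nu_s\gamma_s^{1/2}(H^0_s-u^0_s)$ times the bracket in $d(\nu_s\gamma_s)$, recognise this as $-X^0_s\,d(\nu_s\gamma_s)$, and integrate. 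Your route avoids the uniqueness argument and does not rely on \Cref{lem:dynalphabeta}, at the cost of computing $d(\nu_s\gamma_s^{1/2})$ from scratch; the paper's route reuses an existing lemma but needs the extra step of identifying the homogeneous SDE. Both hinge on the same algebraic cancellation, and your organisation is arguably a touch more direct.
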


\begin{proof}
	First, $X^0$ is progressively measurable and has initial value $X^0_{t-}=x$ and terminal value $X^0_T=\xi$. 
	Furthermore, it holds that 
	\begin{equation*}
		\begin{split}
			\int_t^T (X_s^0)^2 ds 
			& \leq 2 \int_t^T \gamma_s^{-1}(u_s^0)^2 ds + 2 \int_t^T \gamma_s^{-1} (H^0_s)^2 ds < \infty \text{ a.s.}
		\end{split}
	\end{equation*}
	since $\gamma$ and $H^0$ have a.s.\ continuous paths and $E[\int_t^T (u_s^0)^2 ds]<\infty$. 
	We are therefore able to define $D^0$ by \eqref{eq:defdeviationpm}. 		
	Moreover, denote $\alpha_s=\gamma_s^{-\frac12} \nu_s^{-1}$, $s\in [t,T]$, and 
	$\beta_s = d - \gamma_t x - \int_t^s X^0_r d(\nu_r\gamma_r)$, $s \in [t,T]$. 
	It follows from \Cref{lem:dynalphabeta} and $-\gamma_s^{\frac12} X^0_s = H^0_s - u^0_s$, $s \in [t,T)$, that for all $s \in [t,T]$ 
	\begin{equation*}
		\begin{split}
			& d(\alpha_s\beta_s) \\ 
			& = 
			(H^0_s - u^0_s) \Bigg( \Big( \mu_s +\rho_s - \frac12 \sigma_s \eta_s \rcor_s - \frac12 \sigma_s^2 \Big) ds 
			+ \big( \sigma_s + \eta_s \rcor_s \big)dW_s^1 
			+ \eta_s \sqrt{1-\rcor_s^2}dW_s^2 \Bigg) \\
			& \quad + \alpha_s\beta_s  \Bigg(  
			\Big( - \rho_s - \frac12 \mu_s + \frac38 \sigma_s^2 + \frac12 \sigma_s \eta_s \rcor_s \Big) ds 
			+ \Big( -\eta_s \rcor_s - \frac12 \sigma_s \Big) dW_s^1
			- \eta_s \sqrt{1-\rcor_s^2} dW_s^2
			\Bigg) .
		\end{split}
	\end{equation*}
	We combine this with  
	\begin{equation*}
		\begin{split}
			dH^0_s & = -u^0_s \Bigg( \Big( \mu_s + \rho_s - \frac12 \sigma_s \eta_s \rcor_s - \frac12 \sigma_s^2 \Big) ds 
			+ (\sigma_s + \eta_s \rcor_s) dW_s^1 
			+  \eta_s \sqrt{1-\rcor_s^2} dW_s^2 \Bigg) \\
			& \quad + H^0_s \Bigg( \Big( \frac12 \mu_s - \frac18 \sigma_s^2 \Big) ds + \frac12 \sigma_s dW_s^1 \Bigg)
			, \quad s\in [t,T],
		\end{split}
	\end{equation*} 
	to obtain for all $s \in [t,T]$ that 
	\begin{equation}\label{eq:SDEalhpabetaminusssfiu}
		\begin{split}
			d(\alpha_s\beta_s - H^0_s) 
			& = (\alpha_s\beta_s - H^0_s) 
			\Bigg(  
			\Big( - \rho_s - \frac12 \mu_s + \frac38 \sigma_s^2 + \frac12 \sigma_s \eta_s \rcor_s \Big) ds 
			+ \Big( -\eta_s \rcor_s - \frac12 \sigma_s \Big) dW_s^1 \\
			& \quad \quad \quad \qquad \qquad- \eta_s \sqrt{1-\rcor_s^2} dW_s^2
			\Bigg) .
		\end{split}
	\end{equation}
	Note that $\alpha_t\beta_t=\gamma_t^{-\frac12} d - \gamma_t^{\frac12} x = H^0_t$. 
	We thus conclude that $0$ is the unique solution of~\eqref{eq:SDEalhpabetaminusssfiu}, and hence 
	$H^0_s = \gamma^{-\frac12}_s \nu_s^{-1} (d-\gamma_t x - \int_t^s X^0_r d(\nu_r\gamma_r) )$, $s\in[t,T]$. 
	This implies that $D^0=\gamma X^0 + \gamma^{\frac12} H^0$, i.e., $D^0_s = \gamma_s^{\frac12} u^0_s$, $s \in [t,T)$, and $D^0_T=\gamma_T \xi + \gamma_T^{\frac12} H^0_T$.
	The fact that $E[ \int_t^T (u_s^0)^2 ds ] < \infty$ then immediately yields that (A1) holds. 
	This proves that $X^0 \in \cA_t^{pm}(x,d)$.
\end{proof}

We finally are able to prove \Cref{thm:contextcostfct}.

\begin{proof}[Proof of \Cref{thm:contextcostfct}]
	(i) 
	Denote by $D$, $D^n$, $n \in\N$, the deviation processes associated to $X$, $X^n$, $n\in\N$, and 
	let $\ssfi$ and $\ssfi^n$, $n\in\N$, be the scaled hidden deviation processes. 	
	By~\Cref{lem:scaledhiddendevdyn} 
	it holds for all $n \in\N$ that 
	\begin{equation*}
		\begin{split}
			\left\lvert  \pmJ_t(x,d,X^{n}) - \pmJ_t(x,d,X) \right\rvert 
			& = \Bigg \lvert  
			\frac12 E_t\left[ \int_t^T \gamma_s^{-1} \left( (D_s^{n})^2 - D_s^2 \right) 
			2(\kappa_s+\lambda_s) ds \right] \\
			& \quad - 2 E_t\!\left[ \int_t^T \! \lambda_s \gamma_s^{-\frac12} \! \left( D_s^{n} \big( \ssfi_s^n + \gamma_s^{\frac12} \zeta_s \big) \! - \! D_s \big( \ssfi_s + \gamma_s^{\frac12} \zeta_s \big) \! \right)\! ds \right] \\
			& \quad + E_t\left[ \int_t^T \lambda_s \left( \big( \ssfi_s^n + \gamma_s^{\frac12} \zeta_s \big)^2 - \big( \ssfi_s + \gamma_s^{\frac12} \zeta_s \big)^2 \right) ds \right] \\
			& \quad + \frac12 E_t\left[ ( \ssfi_T^n + \gamma_T^{\frac12} \xi )^2 - ( \ssfi_T + \gamma_T^{\frac12} \xi )^2 \right] 
			\Bigg\rvert .
		\end{split}
	\end{equation*}
	Boundedness of $\lambda,\rcor,\rho,\mu,\eta$ and $\sigma$ implies 
	(recall also~\eqref{eq:defkappa}) 
	that there exists some $c\in (0,\infty)$ such that for all $n \in \N$ it holds that 
	\begin{equation}\label{eq:convcostfctintermediatestep}
		\begin{split}
			& E\left[ \left\lvert  \pmJ_t(x,d,X^{n}) - \pmJ_t(x,d,X) \right\rvert \right] \\
			& \leq 
			E\left[ \left\lvert ( \ssfi_T^n + \gamma_T^{\frac12} \xi )^2 - ( \ssfi_T + \gamma_T^{\frac12} \xi )^2 \right\rvert  \right] 
			+
			c  E\left[ \int_t^T \left\lvert \gamma_s^{-1} \left( (D_s^{n})^2 - D_s^2 \right) \right\rvert ds \right] \\
			& \quad + c  E\left[ \int_t^T \left\lvert  \gamma_s^{-\frac12} \left( D_s^{n} \big( \ssfi_s^n + \gamma_s^{\frac12} \zeta_s \big) - D_s \big( \ssfi_s + \gamma_s^{\frac12} \zeta_s \big) \right) \right\rvert ds \right]   \\
			& \quad  
			+ c E\left[ \int_t^T  \left\lvert \big( \ssfi_s^n + \gamma_s^{\frac12} \zeta_s \big)^2 - \big( \ssfi_s + \gamma_s^{\frac12} \zeta_s \big)^2 \right\rvert ds \right] 
			.
		\end{split}
	\end{equation}
	We treat the terminal costs first. 
	It holds for all $n \in \N$ that 
	\begin{equation*}
		\begin{split}
			& E\left[ \left\lvert ( \ssfi_T^n + \gamma_T^{\frac12} \xi )^2 - ( \ssfi_T + \gamma_T^{\frac12} \xi )^2  \right\rvert \right] 
			= E\left[ \left\lvert (\ssfi_T^n)^2 + 2\ssfi_T^n \gamma_T^{\frac12} \xi - \ssfi_T^2 - 2 \ssfi_T \gamma_T^{\frac12} \xi \right\rvert \right] \\  
			& \qquad\qquad\qquad\qquad \leq  E\left[ \left\lvert (\ssfi_T^{n})^2 - \ssfi_T^2 \right\rvert \right]  
			+ 2 E\left[ \left\lvert (\ssfi^{n}_T - \ssfi_T) \gamma_T^{\frac12} \xi \right\rvert \right] \\
			& \qquad\qquad\qquad\qquad \leq E\left[ \left\lvert (\ssfi_T^{n})^2 - \ssfi_T^2 \right\rvert \right]  
			+ 2 \left( E\left[ (\ssfi^{n}_T - \ssfi_T)^2 \right] \right)^{\frac12} 
			\left( E\left[\gamma_T \xi^2 \right] \right)^{\frac12} .
		\end{split}
	\end{equation*}
	From 
	\begin{equation}\label{eq:assconvDn}
		\lim_{n\to\infty} E\left[\int_t^T (D_s^n - D_s)^2 \gamma_s^{-1} ds \right] = 0
	\end{equation} 
	(cf. \eqref{eq:defmetriconpm}) 
	and \Cref{lem:scaledhiddendevconv} 
	we have that 
	\begin{equation}\label{eq:convsupHn}
		\lim_{n\to\infty} E\left[ \sup_{s \in [t,T]} \lvert \ssfi_s^{n} - \ssfi_s \rvert^2 \right] = 0 .
	\end{equation}
	Since furthermore $E[\gamma_T\xi^2]<\infty$, we obtain that 
	$\lim_{n\to\infty} E[ \lvert  ( \ssfi_T^n + \gamma_T^{\frac12} \xi )^2 - ( \ssfi_T + \gamma_T^{\frac12} \xi )^2 \rvert ]  = 0.$ 
	The second term in~\eqref{eq:convcostfctintermediatestep} converges to $0$ using \eqref{eq:assconvDn}. 
	For the third term in \eqref{eq:convcostfctintermediatestep} we have for all $n \in \N$ that 
	\begin{equation}\label{eq:crosstermconv}
		\begin{split}
			& E\left[ \int_t^T \left\lvert  \gamma_s^{-\frac12} \left( D_s^{n} \big( \ssfi_s^n + \gamma_s^{\frac12} \zeta_s \big) - D_s \big( \ssfi_s + \gamma_s^{\frac12} \zeta_s \big) \right) \right\rvert ds \right]   \\ 
			& \leq E\left[ \int_t^T 
			\left( \big\lvert \ssfi_s + \gamma_s^{\frac12} \zeta_s \big\rvert \,  \lvert D_s^{n} - D_s \rvert \gamma_s^{-\frac12} +  \gamma_s^{-\frac12} \lvert D_s^{n} \rvert \, \lvert \ssfi_s^{n} - \ssfi_s \rvert \right)
			ds \right] \\
			& \leq \left(E\bigg[  \int_t^T \big( \ssfi_s +  \gamma_s^{\frac12} \zeta_s )^2 ds \bigg]\right)^{\frac12} \left(E\bigg[ \int_t^T (D_s^{n} - D_s )^2 \gamma_s^{-1} ds\bigg]\right)^{\frac12} \\
			&\quad\,
			+ \left(E\bigg[ \int_t^T \gamma_s^{-1} (D_s^{n})^2 ds\bigg]\right)^{\frac12} T^{\frac12} \left(E\bigg[ \sup_{s\in[t,T]} \lvert \ssfi_s^{n} - \ssfi_s \rvert^2 \bigg]\right)^{\frac12} . 
		\end{split}
	\end{equation}
	By \Cref{lem:scaledhiddendevdyn} and \eqref{eq:conditionxi} it holds that 
	$E[  \int_t^T ( \ssfi_s +  \gamma_s^{\frac12} \zeta_s )^2 ds ] < \infty$.
	Moreover, due to \eqref{eq:assconvDn}, we have that $E[ \int_t^T \gamma_s^{-1}(D_s^n)^2 ds]$ is uniformly bounded in $n\in\N$. 
	It thus follows from \eqref{eq:assconvDn}, \eqref{eq:convsupHn} and \eqref{eq:crosstermconv} that the third term in~\eqref{eq:convcostfctintermediatestep} converges to $0$ as $n\to\infty$. 
	The last term in~\eqref{eq:convcostfctintermediatestep} converges to $0$ using \eqref{eq:conditionxi} and \eqref{eq:convsupHn}. 
	This proves claim (i).
	
	(ii) 
	Suppose that $X\in\cA^{pm}_t(x,d)$. 
	Let $u=(u_s)_{s\in[t,T]}$ be defined by  $u_s=\gamma_s^{-\frac12} D_s$, $s \in [t,T]$, where $D$ denotes the deviation associated to $X$. 
	Then, $u$ is a progressively measurable process, and due to assumption (A1) it holds that $E[\int_t^T u_s^2 ds]<\infty$. 
	By~\Cref{lem:approxargumentKS} there exists a sequence of bounded c\`adl\`ag finite variation processes $(v^n)_{n\in\N}$ such that 
	$\lim_{n\to\infty} E[ \int_t^T ( \frac{u_s}{Z_s} - v_s^n )^2 Z_s^2 ds ] = 0$, where $Z$ is defined in~\eqref{eq:defZ}. 
	Set $u^n = v^n Z$, $n \in \N$. This is a sequence of c\`adl\`ag semimartingales in $\cL_t^2$ that satisfies 
	$\lim_{n\to\infty} \lVert u-u^n \rVert_{\cL_t^2} = 0$. 
	Moreover, it holds for all $n \in \N$ and any $p\geq 2$ that $E[\sup_{s\in[t,T]} \lvert u_s^n \rvert^p ]<\infty$. 
	For each $u^n$, $n\in\N$, let $H^n=(H^n_s)_{s\in[t,T]}$ be the solution of~\eqref{eq:SDEH0}. 
	We then define a sequence of c\`adl\`ag semimartingales $X^n=(X^n_s)_{s\in[t-,T]}$, $n \in \N$, by 
	$X^n_s=\gamma_s^{-\frac12}(u_s^n-H_s^n)$, $s\in [t,T)$, $X_{t-}^n=x$, $X_T^n=\xi$. 
	By \Cref{lem:getXfromu} we have for all $n\in\N$ that $X^n\in\cA_t^{pm}(x,d)$ and that $D^n=\gamma X^n + \gamma^{\frac12} H^n$ for the associated deviation process $D^n=(D^n_s)_{s\in[t-,T]}$.
	It follows for all $n\in\N$ that $D_s^n=\gamma_s^{\frac12}u_s^n$, $s\in[t,T)$. 
	Therefore, it holds for all $n\in\N$ that 
	\begin{equation*}
		\md(X^n,X) 
		= \left(E\left[ \int_t^T (D_s^n-D_s)^2 \gamma_s^{-1} ds  \right]\right)^{\frac12} 
		= \left(E\left[ \int_t^T (u_s^n-u_s)^2 ds  \right]\right)^{\frac12} .
	\end{equation*}
	Due to $\lim_{n\to\infty} \lVert u-u^n \rVert_{\cL_t^2} = 0$, we thus have that $\lim_{n\to\infty} \md(X^n,X)=0$. 
	We next show that for all $n\in\N$, $X^n$ has finite variation. 
	To this end, we observe that for all $n\in\N$ and $s\in[t,T)$ it holds by integration by parts that
	\begin{equation}\label{eq:2502a4}
		dX_s^n = \gamma_s^{-\frac12} d(u_s^n-H_s^n) 
		+ (u_s^n-H_s^n)d\gamma_s^{-\frac12} 
		+ d[\gamma^{-\frac12},u^n-H^n]_s.	
	\end{equation}
	Again by integration by parts, and using~\eqref{eq:SDEforZ}, we have for all $n\in\N$ and $s \in [t,T]$ that 
	\begin{equation*}
		\begin{split}
			du_s^n & = v_s^n dZ_s + Z_s dv_s^n + d[v^n,Z]_s \\
			& = \! \frac12 u_s^n \bigg( \!\Big( \frac12 \sigma_s \! + \! \eta_s \rcor_s\! \Big)^{\!2} \!\! +\! \eta_s^2(1-\rcor_s^2) \! \bigg)\! ds 
			- \! u_s^n \Big( \frac12 \sigma_s\! +\! \eta_s \rcor_s\! \Big)\! dW_s^1 \!
			- \! u_s^n \eta_s \sqrt{1\!-\!\rcor_s^2} dW_s^2  \!
			+ \! Z_s dv_s^n .
		\end{split}
	\end{equation*}
	This and~\eqref{eq:SDEH0} yield for all $n\in\N$ and $s\in[t,T]$ that 
	\begin{equation}\label{eq:2502a2}
		\begin{split}
			\gamma_s^{-\frac12}d(u_s^n-H_s^n) & = 
			\gamma_s^{-\frac12} \left( \rho_s + \mu_s + \frac12 \eta_s^2 - \frac38 \sigma_s^2 \right) u_s^n ds 
			- \gamma_s^{-\frac12} \left( \frac12 \mu_s - \frac18 \sigma_s^2 \right) H_s^n ds \\
			& \quad + \gamma_s^{-\frac12} \frac12 \sigma_s (u_s^n-H_s^n) dW_s^1 
			+ \gamma_s^{-\frac12} Z_s dv_s^n . 
		\end{split}
	\end{equation}
	Moreover, it follows from \eqref{eq:sqrtalphadyn} for all $n\in\N$ and $s\in[t,T]$ that 
	\begin{equation}\label{eq:2502a3}
		\begin{split}
			(u_s^n-H_s^n)d\gamma_s^{-\frac12} 
			& = (u_s^n-H_s^n)\gamma_s^{-\frac12} \left( -\frac12 \mu_s + \frac38 \sigma_s^2 \right) ds 
			- (u_s^n - H_s^n) \gamma_s^{-\frac12} \frac12\sigma_s dW_s^1.
		\end{split}
	\end{equation}
	We combine \eqref{eq:2502a4}, \eqref{eq:2502a2}, and \eqref{eq:2502a3} to obtain for all $n\in\N$ and $s\in(t,T)$ that 
	\begin{equation*}
		\begin{split}
			dX_s^n & = \gamma_s^{-\frac12} u_s^n \left( \rho_s + \frac12 \mu_s + \frac12 \eta_s^2  \right) ds 
			- \gamma_s^{-\frac12} H_s^n \frac14 \sigma_s^2 ds 
			+ \gamma_s^{-\frac12} Z_s dv_s^n + d[\gamma^{-\frac12},u^n-H^n]_s.
		\end{split}
	\end{equation*}
	Since $v^n$ has finite variation for all $n\in\N$, this representation shows that also $X^n$ has finite variation for all $n\in\N$.
	Note that for all $n\in\N$, by \Cref{propo:costfunctionalpart}, the process \eqref{eq:deviationdyndR} associated to the c\`adl\`ag finite variation process $X^n$ is nothing but $D^n$. 
	Since $\eta$ is bounded, there exists $c \in (0,\infty)$ such that for all $n\in\N$
	\begin{equation*}
				\begin{split}
						E\left[ \left( \int_t^T (D_s^n)^4 \gamma_s^{-2} \eta_s^2 ds \right)^{\frac12} \right] 
						& = E\left[ \left( \int_t^T (u_s^n)^4 \eta_s^2 ds \right)^{\frac12} \right] 
						\leq c E[ \sup_{s\in[t,T]} (u_s^n)^2 ] < \infty.
					\end{split}
			\end{equation*}
	This implies (A2). Similarly, by boundedness of $\sigma$, we obtain (A3). 
	We thus conclude that $X^n \in \cA_t^{fv}(x,d)$ for all $n \in \N$.

	(iii)
	Let $(X^n)_{n\in\N}$ be a Cauchy sequence in $(\cA_t^{pm}(x,d),\md)$. 
	For $n\in\N$ we denote by $D^n$ the deviation process associated to $X^n$. 
	It then holds that $(\gamma^{-\frac12} D^n)_{n\in\N}$ is a Cauchy sequence in $(\cL_t^2,\lVert \cdot \rVert_{\cL_t^2})$. 
	Since $(\cL_t^2,\lVert \cdot \rVert_{\cL_t^2})$ is complete (see, e.g., Lemma 2.2 in Section 3.2 of \cite{karatzasshreve}), there exists $u^0 \in \cL_t^2$ such that $\lim_{n\to\infty} \lVert \gamma^{-\frac12} D^n - u^0 \rVert_{\cL_t^2} = 0$. 
	Define $X^0=(X^0_s)_{s\in[t-,T]}$ by $X_{t-}^0=x$, $X_T^0=\xi$, $X_s^0=\gamma_s^{-\frac12} (u_s^0 - H_s^0)$, $s \in [t,T)$, where $H^0$ is given by \eqref{eq:SDEH0}. 
	By \Cref{lem:getXfromu} it holds that $X^0 \in \cA_t^{pm}(x,d)$. We furthermore obtain from \Cref{lem:getXfromu} that, for the associated deviation,  $D^0=\gamma X^0 + \gamma^{\frac12}H^0$. By definition of $X^0$, this yields $\gamma_s^{-\frac12}D_s^0=u_s^0$, $s\in[t,T)$. 
	It follows that 
	\begin{equation*}
		\begin{split}
			\md(X^n,X^0) 
			& = \left( E\left[ \int_t^T (\gamma_s^{-\frac12} D^n_s - \gamma_s^{-\frac12} D_s^0)^2 ds \right] \right)^{\frac12} 
			= \lVert \gamma^{-\frac12} D^n - u^0 \rVert_{\cL_t^2} ,
		\end{split}
	\end{equation*}
	and hence $\lim_{n\to\infty} \md(X^n,X^0)=0$.
\end{proof}

\begin{proof}[Proof of \Cref{lem:givenXpmfindustcostfcteq}]
	By definition of $u$ we have that $u$ is progressively measurable and, due to assumption (A1), satisfies $E[\int_t^T u_s^2 ds ]<\infty$; hence, $u \in \cL_t^2$.
	
	Let $\ssfi_s=\gamma_s^{-\frac12} D_s - \gamma_s^{\frac12} X_s$, $s \in [t,T]$, be the scaled hidden deviation \eqref{eq:scaledhiddendevdef} associated to $X$. 
	We can substitute $u=\gamma^{-\frac12} D$ in the cost functional~\eqref{eq:quadr_pmj} and also in the dynamics~\eqref{eq:dynscaledhiddendev} of $\ssfi$. 
	Observe that $\ssfi$ follows the same dynamics as the state process $\ssfiu$ associated to $u$ (see \eqref{eq:controlledprocdyn}), 
	and that 
	$\ssfi_t = \frac{d}{\sqrt{\gamma_t}}-\sqrt{\gamma_t}x = \ssfiu_t$. 
	Therefore, $\ssfi$ and $\ssfiu$ coincide, which completes the proof.
\end{proof}

\begin{proof}[Proof of \Cref{propo:givenugetX}]
	It follows from \Cref{lem:getXfromu} that $X \in \cA_t^{pm}(x,d)$. 
	Moreover, we have from \Cref{lem:getXfromu} that the associated deviation satisfies 
	$D=\gamma X + \gamma^{\frac12} \ssfiu$, i.e., $D_s = \gamma_s^{\frac12} u_s$, $s \in [t,T)$, 
	and $\ssfiu$ is the scaled hidden deviation of $X$. 
	It thus holds that $\pmJ_t(x,d,X)$ is given by~\eqref{eq:quadr_pmj}. 
	In the definition \eqref{eq:defcostfct2} of $\LQJ$, we may 
	replace 
	$u$ under the integrals with respect to the Lebesgue measure 
	by $\gamma^{-\frac12} D$. 
	This shows that 
	$\pmJ_t(x,d,X)=\LQJ_t( \frac{d}{\sqrt{\gamma_t}} - \sqrt{\gamma_t} x, u ) - \frac{d^2}{2\gamma_t}$.
\end{proof}

\begin{proof}[Proof of \Cref{lem:trafouvsuhat}]
	(i) 
	We have that $\hat u$ is progressively measurable. Furthermore, the facts that $E[\int_t^T u_s^2 ds]<\infty$, $E[\sup_{s \in [t,T]} \ssfiu_s^2]<\infty$, $E[\int_0^T \gamma_s \zeta_s^2 ds] <\infty$, and~\eqref{eq:cond_no_ct} 
	imply that $E[\int_t^T \hat u_s^2 ds]<\infty$. Hence, $\hat u \in \cL_t^2$. 
	Substituting 
	$u_s = \hat u_s + \frac{\lambda_s}{\lambda_s+\kappa_s} (\ssfiu_s + \sqrt{\gamma_s} \zeta_s )$, $s\in[t,T]$, 
	in \eqref{eq:controlledprocdyn} leads to  \eqref{eq:controlledprocdynhat}. 
	For the cost functional, observe that 
	\begin{equation}\label{eq:addingthesquareincostfct}
		\begin{split}
			& \frac12 \Big( 2\lambda_s ( \ssfiu_s + \sqrt{\gamma_s} \zeta_s )^2 - 4\lambda_s ( \ssfiu_s + \sqrt{\gamma_s} \zeta_s ) u_s \Big) + (\kappa_s+\lambda_s) u_s^2 \\
			& = \lambda_s ( \ssfiu_s + \sqrt{\gamma_s} \zeta_s )^2 - (\lambda_s+\kappa_s) \frac{\lambda_s^2}{\left(\lambda_s+\kappa_s\right)^2} ( \ssfiu_s + \sqrt{\gamma_s} \zeta_s )^2 \\
			& \quad + (\lambda_s +\kappa_s) \left(u_s - \frac{\lambda_s}{\lambda_s+\kappa_s} ( \ssfiu_s + \sqrt{\gamma_s} \zeta_s ) \right)^2 \\
			& = \frac{\lambda_s \kappa_s}{\lambda_s+\kappa_s} \left( \ssfihat_s + \sqrt{\gamma_s} \zeta_s \right)^2 + (\lambda_s+\kappa_s) \hat u_s^2 ,
			\quad s\in[t,T].
		\end{split}
	\end{equation}
	
	(ii) 
	Note that \eqref{eq:controlledprocdynhat} is an SDE that is linear in $\ssfihat$, $\hat u$, and $\sqrt{\gamma}\zeta$. Furthermore, boundedness of $\rho,\mu,\sigma,\eta,\rcor$ 
	and~\eqref{eq:cond_no_ct} 
	imply that the coefficients of the SDE are bounded. 
	Since moreover $E[ \int_t^T (\hat u_s)^2 + \gamma_s \zeta_s^2 ds ] <\infty$ and $\ssfihat_t$ is square integrable, we know that $E[ \sup_{s \in [t,T]} \ssfihat_s^2 ]<\infty$  (see, e.g., \cite[Theorem~3.2.2 and Theorem~3.3.1]{zhang}).	
	We can thus argue similar to (i) that $u \in \cL_t^2$. 
	A substitution of $\hat u$ in \eqref{eq:controlledprocdynhat} yields \eqref{eq:controlledprocdyn}. A reverse version of the argument in \eqref{eq:addingthesquareincostfct} proves equality of the cost functionals.
\end{proof}

\bigskip 
\textbf{Acknowledgement:}
We thank
Dirk Becherer, Tiziano De Angelis, Miryana Grigorova, Martin Herdegen, and Yuri Kabanov 
for inspiring discussions.
We are grateful
to the associate editor and two anonymous referees
for constructive comments and suggestions that helped us improve the manuscript.

\bibliographystyle{abbrv}
\bibliography{literature}

\end{document}